\begin{document}
\theoremstyle{plain}
\newtheorem{thm}{Theorem}[section]
\newtheorem*{thm*}{Theorem}
\newtheorem{prop}[thm]{Proposition}
\newtheorem{prop-def}[thm]{Proposition/Definition}
\newtheorem*{prop*}{Proposition}
\newtheorem{lemma}[thm]{Lemma}
\newtheorem{cor}[thm]{Corollary}
\newtheorem*{conj*}{Conjecture}
\newtheorem*{cor*}{Corollary}
\newtheorem{defn}[thm]{Definition}
\newtheorem{cond}{Condition}
\theoremstyle{definition}
\newtheorem*{defn*}{Definition}
\newtheorem{rems}[thm]{Remarks}
\newtheorem*{rems*}{Remarks}
\newtheorem*{proof*}{Proof}
\newtheorem{rem}[thm]{Remark}
\newtheorem*{rem*}{Remark}
\newtheorem*{not*}{Notation}
\newcommand{\npartial}{\slash\!\!\!\partial}
\newcommand{\Heis}{\operatorname{Heis}}
\newcommand{\Solv}{\operatorname{Solv}}
\newcommand{\Spin}{\operatorname{Spin}}
\newcommand{\SO}{\operatorname{SO}}
\newcommand{\ind}{\operatorname{ind}}
\newcommand{\Index}{\operatorname{index}}
\newcommand{\ch}{\operatorname{ch}}
\newcommand{\rank}{\operatorname{rank}}
%    Absolute value notation
\newcommand{\abs}[1]{\lvert#1\rvert}
 \newcommand{\A}{{\mathcal A}}
        \newcommand{\D}{{\mathcal D}}\newcommand{\HH}{{\mathcal H}}
        \newcommand{\LL}{{\mathcal L}}
        \newcommand{\B}{{\mathcal B}}
        \newcommand{\K}{{\mathcal K}}
 \newcommand{\E}{{\mathcal E}}
\newcommand{\oo}{{\mathcal O}}
         \newcommand{\PP}{{\mathcal P}}
        \newcommand{\s}{\sigma}
\newcommand{\al}{\alpha}
        \newcommand{\coker}{{\mbox coker}}
        \newcommand{\p}{\partial}
        \newcommand{\dd}{|\D|}
        \newcommand{\n}{\parallel}
\newcommand{\bma}{\left(\begin{array}{cc}}
\newcommand{\ema}{\end{array}\right)}
\newcommand{\bca}{\left(\begin{array}{c}}
\newcommand{\eca}{\end{array}\right)}
\def\clsp{\overline{\operatorname{span}}}
\def\T{\mathbb T}
\def\Aut{\operatorname{Aut}}
\newcommand{\sr}{\stackrel}
\newcommand{\da}{\downarrow}
\newcommand{\tD}{\tilde{\D}}
        \newcommand{\R}{\mathbf R}
        \newcommand{\C}{\mathbf C}
        \newcommand{\h}{\mathbf H}
\newcommand{\Z}{\mathbf Z}
\newcommand{\N}{\mathbf N}
\newcommand{\tto}{\longrightarrow}
\newcommand{\ben}{\begin{displaymath}}
        \newcommand{\een}{\end{displaymath}}
\newcommand{\be}{\begin{equation}}
\newcommand{\ee}{\end{equation}}
        \newcommand{\bean}{\begin{eqnarray*}}
        \newcommand{\eean}{\end{eqnarray*}}
\newcommand{\nno}{\nonumber\\}
\newcommand{\bea}{\begin{eqnarray}}
        \newcommand{\eea}{\end{eqnarray}}
\def\cross#1{\rlap{\hskip#1pt\hbox{$-$}}}
        \def\intcross{\cross{0.3}\int}
        \def\bigintcross{\cross{2.3}\int}
%    Blank box placeholder for figures (to avoid requiring any
%    particular graphics capabilities for printing this document).
%\newcommand{\blankbox}[2]{%
 % \parbox{\columnwidth}{\centering
%    Set fboxsep to 0 so that the actual size of the box will match the
%    given measurements more closely.
  %  \setlength{\fboxsep}{0pt}%
   % \fbox{\raisebox{0pt}[#2]{\hspace{#1}}}%
  %}%
%}
\newcommand{\supp}[1]{\operatorname{#1}}
\newcommand{\norm}[1]{\parallel\, #1\, \parallel}
\newcommand{\ip}[2]{\langle #1,#2\rangle}
\setlength{\parskip}{.3cm}
\newcommand{\nc}{\newcommand}
\nc{\nt}{\newtheorem} \nc{\gf}[2]{\genfrac{}{}{0pt}{}{#1}{#2}}
\nc{\mb}[1]{{\mbox{$ #1 $}}} \nc{\real}{{\mathbb R}}
\nc{\comp}{{\mathbb C}} \nc{\ints}{{\mathbb Z}}
\nc{\Ltoo}{\mb{L^2({\mathbf H})}} \nc{\rtoo}{\mb{{\mathbf R}^2}}
\nc{\slr}{{\mathbf {SL}}(2,\real)} \nc{\slz}{{\mathbf {SL}}(2,\ints)}
\nc{\su}{{\mathbf {SU}}(1,1)} \nc{\so}{{\mathbf {SO}}}
\nc{\hyp}{{\mathbb H}}
\nc{\disc}{{\mathbf D}}
\nc{\torus}{{\mathbb T}}
\newcommand{\tk}{\widetilde{K}}
\newcommand{\boe}{{\bf e}}\newcommand{\bt}{{\bf t}}
\newcommand{\vth}{\vartheta}
\newcommand{\CGh}{\widetilde{\CG}}
\newcommand{\db}{\overline{\partial}}
\newcommand{\tE}{\widetilde{E}}
\newcommand{\tr}{\mbox{tr}}
\newcommand{\ta}{\widetilde{\alpha}}
\newcommand{\tb}{\widetilde{\beta}}
\newcommand{\txi}{\widetilde{\xi}}
\newcommand{\hV}{\hat{V}}
\newcommand{\IC}{\mathbf{C}}
\newcommand{\IZ}{\mathbf{Z}}
\newcommand{\IP}{\mathbf{P}}
\newcommand{\IR}{\mathbf{R}}
\newcommand{\IH}{\mathbf{H}}
\newcommand{\IG}{\mathbf{G}}
\newcommand{\CC}{{\mathcal C}}
\newcommand{\CS}{{\mathcal S}}
\newcommand{\CG}{{\mathcal G}}
\newcommand{\CL}{{\mathcal L}}
\newcommand{\CO}{{\mathcal O}}
%\nc{\tr}{{\mbox{ Tr}}
\nc{\ca}{{\mathcal A}} \nc{\cag}{{{\mathcal A}^\Gamma}}
\nc{\cg}{{\mathcal G}} \nc{\chh}{{\mathcal H}} \nc{\ck}{{\mathcal B}}
\nc{\cl}{{\mathcal L}} \nc{\cm}{{\mathcal M}}
\nc{\cn}{{\mathcal N}} \nc{\cs}{{\mathcal S}} \nc{\cz}{{\mathcal Z}}
\nc{\cM}{{\mathcal M}}
%\nc{\ind}{{\rm ind}}
\nc{\sind}{\sigma{\rm -ind}}
\newcommand{\la}{\langle}
\newcommand{\ra}{\rangle}
\newcommand{\ual}{\underline{\al}}
\renewcommand{\labelitemi}{{}}
 \title{A noncommutative Atiyah-Patodi-Singer index theorem in KK-theory}
        \author{A. L. Carey} \author{J. Phillips} \author{A. Rennie}
        \address{Mathematical Sciences Institute, Australian National University, Canberra, ACT, Australia.}
\address{Department of Mathematics and Statistics, University of Victoria,
Victoria, BC, Canada.}
\address{Department of Mathematics, Copenhagen University, Universitetsparken
5, Copenhagen, Denmark.}
\email{ acarey@maths.anu.edu.au,phillips@math.uvic.ca,
        rennie@maths.anu.edu.au}
        \maketitle

\centerline{{\bf Abstract}}

We investigate an extension of ideas of Atiyah-Patodi-Singer
(APS) to a noncommutative geometry setting
framed in terms of Kasparov modules.
We use a mapping cone construction to relate odd index pairings
to even index pairings
with APS boundary conditions in the setting of $KK$-theory,
generalising the commutative theory.
We find that Cuntz-Kreiger systems
provide a natural class of examples for our construction and
 the index pairings coming from APS boundary
conditions yield complete $K$-theoretic information about
certain graph $C^*$-algebras.
\parskip=0.0cm
\tableofcontents
\parskip=0.3cm

\section{Introduction}

This paper is about a noncommutative
analogue of APS index theory. 
We will
focus on one aspect of generalising the APS theory. Namely we replace  classical
first order elliptic operators on a manifold with product metric near the
boundary by a `cylinder' of operators on a Kasparov module. We explain
below how the classical theory provides an example of this more general
framework. We also show in the last Section that there are many noncommutative
examples as well. Our motivation is not simply that we are trying to understand noncommutative manifolds with boundary but is derived from the fact that the construction in this paper
can be applied to many index problems in semifinite noncommutative geometry using \cite{KNR}
(which we plan to address elsewhere).

To explain our point of view
let us recast  a simple special case,
using the language of later Sections,
the connection between spectral flow and APS boundary
conditions discussed in \cite{APS3}. Let $X$
be a closed Riemannian manifold, of odd dimension, and let $\D$
be a (self-adjoint) Dirac type operator on $X$. Then $\D$ determines an odd
$K$-homology class $[\D]$ for the algebra $C(X)$ and  we may pair
$[\D]$ with the $K$-theory class of a unitary $u\in M_k(C(X))$ to obtain the
integer
$$ \mbox{Index}(P_kuP_k)=sf(\D_k,u\D_k u^*).$$
Here $P_k$ is the nonnegative spectral projection for $\D_k:=\D\otimes
Id_{\C^k}$ and the index
of the `Toeplitz operator' $P_kuP_k$ gives the spectral flow
$sf(\D_k,u\D_ku^*)$ from $\D_k$ to
$u\D_k u^*$.

We may also attach a semi-infinite cylinder to $X$, and consider
the manifold-with-boundary
$X\times\R_+$. If $\D$ acts on sections of some bundle
$S\to X$, then $\D$ determines a self-adjoint operator on the $L^2$-sections of
$S$,
$\HH=L^2(X,S)$, with respect to an appropriate measure constructed from
the Riemannian metric and bundle inner products. We define
$$\hat\HH=\bca L^2(\R_+,\HH)\\
L^2(\R_+,\HH) \oplus\Phi_0\HH\eca
,\ \ \hat\D=\bma 0 & -\p_t+\D\\ \p_t+\D&0\ema,$$
where $\Phi_0$ is the projection onto the kernel of $\D$. It is
necessary to single out the zero eigenvalue of $\D$ for special
attention since it gives rise to `extended $L^2$-solutions' which
contribute to the index, \cite{APS1}. We let $\hat\D$ act as zero on
 $\Phi_0\HH$, and regard this subspace as being
composed of values at infinity of extended solutions (more on this in
the text).

We give $\hat\D$ APS boundary conditions. That is, we take the domain
of $\p_t+\D$ to be
$$ \{\xi\in L^2(\R_+,\HH):(\p_t+\D)\xi\in L^2(\R_+,\HH),\
P\xi(0)=0\}$$
where again $P$ is the nonnegative spectral projection for $\D$. The
domain of $-\p_t+\D$ is defined similarly using $1-P$ in place of
$P$. Then it can be shown, see for instance \cite{APS1}, that
$\hat\D$ is an unbounded self-adjoint operator and for any $f\in
C^\infty(X\times\R_+)$ which is of compact support and equal to a
constant on the boundary, the product
$f(1+\hat\D^2)^{-1/2}$
is a compact operator on $\hat\HH$.

Such functions lie in the mapping cone algebra for the inclusion
$\C\hookrightarrow C(X)$. This is defined as
$$M(\C,C(X))=\{f:\R_+\to C(X):f(0)\in\C 1_X,\ \ f\
\mbox{continuous and vanishes at }\infty\}.$$
We have an exact sequence
$$0\to C(X)\otimes C_0((0,\infty))\to M(\C,C(X))\to \C\to 0$$
from which we get a six term sequence in $K$-theory. Since
$K_1(\C)=0$, this sequence simplifies to
$$0\to K_1(C(X))\to K_0(M(\C,C(X))\to K_0(\C)\to K_0(C(X))\to
K_1(M(\C,C(X)))\to 0.$$ A careful analysis, which we present in
greater generality in this paper, shows that the map $\Z=K_0(\C)\to
K_0(C(X))$ takes $n$ to the class of the trivial bundle of rank $n$
on $X$, and so is injective. Thus we find that
$$K_1(C(X))\cong K_0(M(\C,C(X))),$$
and the mapping cone algebra is providing a suspension of sorts. The
relationship between the even index pairing for $\hat\D$ and the odd
index pairing for $\D$ is then as follows. Let $e_u$ be the
projection over $M(\C,C(X))$ determined by the unitary $u$ over
$C(X)$, so that $[e_u]-[1]\in K_0(M(\C,C(X)))$. Then
$$\mbox{Index}(e_u(\p_t+\D)e_u)-\mbox{Index}(\p_t+\D)=
\la[e_u]-[1],[\hat\D]\ra=\la[u],[\D]\ra
=sf(\D,u\D u^*).$$

The purpose of this paper is to present a noncommutative
analogue of this picture.  Our main result, Theorem \ref{mainresult},
shows that the situation described above for the commutative case
carries over to a class of Kasparov modules for noncommutative algebras.
We exploit a %n interesting but perhaps neglected 
paper of Putnam \cite{Put}
on the K-theory of mapping cone algebras to give  
an  APS type construction for a Kasparov module with
boundary conditions that implies
an equality between even and odd indices.
Not only will we find a new version of this index
equality, but we will see that it allows us to use APS boundary
conditions to obtain interesting index pairings, and consequences, 
that were
previously unknown. For instance we show that the 
complicated $K$-theory 
calculations of \cite{PR} can be given a simple functorial description.

%There is  a non-trivial application of our results which gives
%a topological counterpoint to the work of \cite{pr} on analytic index theorems 
%in semifinite noncommutative geometry for a class of Cuntz-Krieger algebras.

A description of the organisation and main results of the paper now follows. 
We begin  in the next Section with some preliminaries on
Kasparov modules. % and some remarks on the motivating class of %%examples
%constructed in \cite{pr}.
 In Section \ref{map} we review
\cite{Put}, describing $K_0$ of
mapping cone algebras, $M(F,A)$
where $F\subset A$ are certain $C^*$-algebras (replacing the pair
$\C\subset C(X)$ in the classical setting above). We make some
basic computations related to these groups and associated exact
sequences.

The application of APS boundary conditions for Kasparov modules is
done in Section \ref{secAPS}. 
We show that certain odd Kasparov modules for
algebras $A,\,B$ with $F$ a subalgebra of $A$, can be
`suspended'  to obtain even Kasparov modules for the algebras $M(F,A),\,B$,
using APS boundary conditions. The proof is surprisingly complicated
as there are substantial technical issues. Even
self-adjointness of the abstract Dirac operator on the suspension
with APS boundary conditions is not clear.  We solve all of the difficulties using a careful
 construction in the noncommutative setting of a parametrix for our abstract Dirac operators on the even Kasparov module.

The main theorem (Theorem 5.1) shows that two index pairings -- one
from an odd Kasparov module  and one from its even `suspension'  -- with values in $K_0(B)$ are equal. Replacing
$K_0(\C)=\Z$ with $K_0(B)$ gives us an analogue of the
classical example above. The proof is quite difficult; solving differential 
equations in Hilbert $C^*$-modules is a more complex issue than in Hilbert space.

In Section 6 we explain one class of 
examples. There we calculate the $K$-groups of the
mapping cone algebra $M(F,A)$ 
for the inclusion of the fixed point algebra $F$
of the gauge action on certain graph $C^*$-algebras $A$. 
%We obtain an exact 
%sequence that provides $K$-theoretic 
%information about certain Cuntz-Krieger 
%algebras that arise from particular directed graphs. 
For these algebras, the application
of Theorem 5.1 yields in Proposition 5.7
an isomorphism from $K_0(M(F,A))$ to
$K_0(F)$, which leads to a functorial description
of the calculations of $K_0(A),\,K_1(A)$ in \cite{PR}.
 
 Readers familiar with \cite{B-B} may be puzzled by the fact that we do not
study the more general question of boundary conditions parametrised by a
Grassmanian.
In fact we make, in our main theorem, an assumption that classicially
corresponds to assuming that we can work with a fixed APS boundary
condition for all of the perturbed operators we study. We know that for
classical index problems it is often the case that a more general operator
can be homotopied to one that preserves the APS boundary conditions. In
the noncommutative context of this paper we have not studied this homotopy
argument. The examples in Section 6 illustrate that for many cases our
restricted analysis suffices and provides complete information about the
$K$-theory of the relevant algebras.

{\bf Acknowledgements}. We thank Rsyzard Nest for advice on Section 5, David Pask, Aidan
Sims and  Iain Raeburn for enlightening conversations and Ian Putnam for bringing his work to the third author's attention.
%during a visit to Victoria.  
The first and second named authors
acknowledge the financial assistance of the Australian Research
Council and the Natural 
Sciences and Engineering Research Council of Canada
while the third named author thanks  Statens
Naturvidenskabelige Forskningsr{\aa}d, Denmark. All authors are grateful
for the support of the Banff International Research Station where 
some of this
research was undertaken.

\section{Kasparov modules}\label{graph}

%\subsection{Kasparov modules}

The Kasparov modules considered in this subsection are for 
$C^*$-algebras
with  trivial grading.

\begin{defn}\label{Kasparov} An {\bf odd Kasparov $A$-$B$-module} consists of a
countably generated ungraded right $B$-$C^*$-module $E$, with 
$\phi:A\to
End_B(E)$ a $*$-homomorphism, together with $P\in End_B(E)$  such that
$a(P-P^*),\ a(P^2-P),\ [P,a]$ are all compact
endomorphisms. Alternatively, for $V=2P-1$, %in favour of $P$
$a(V-V^*),\ a(V^2-1),\ [V,a]$ are all compact endomorphisms 
for
all $a\in A$. One can modify $P$ to $\tilde P$ so that $\tilde P$ is 
self-adjoint; $\n\tilde P\n\leq 1$; $a(P-\tilde P)$ is compact for 
all $a\in A$ 
and the other conditions for $P$ hold with $\tilde P$ in place of $P$
without changing the module $E$. If $P$ has a spectral gap about $0$
(as happens in the cases of interest here) then we may and do assume that 
$\tilde P$
is in fact a projection without changing the module, $E$. 
(Note 
that by
17.6 of \cite{Bl} we may assume that $P$ is a projection by 
changing to a new module in the same class as $E$. )
%However, we {\bf assume} 
%from now
%on that we can choose $P$ to be a projection.
\end{defn}

By \cite{K}, [Lemma 2, Section 7], the pair $(\phi,P)$ determines a
$KK^1(A,B)$ class, and every class has such a representative. The
equivalence relation on pairs $(\phi,P)$ that give $KK^1$ classes
is generated by unitary equivalence $(\phi,P)\sim (U\phi U^*,UPU^*)$ 
and
homology: $(\phi_1,P_1)\sim (\phi_2,P_2)$ if 
$P_1\phi_1(a)-P_2\phi_2(a)$ is a
compact endomorphism for all $a\in A$, see also \cite[Section 7]{K}.
%The latter may be recast in
%terms of operator homotopies as well, 
Later we will also require {\bf even}, or {\bf graded},  Kasparov modules.
\begin{defn}\label{even} 
An {\bf even Kasparov $A$-$B$-module} has, in addition to the data of the previous definition,
%a
%countably generated  right $B$-$C^*$-module, 
a grading by a self-adjoint
endomorphism
$\Gamma$ with $\Gamma^2=1$ and
%with $\phi:A\to
%End_B(E)$ a $*$-homomorphism, together with $V\in End_B(E)$  such that
% $a(V-V^*),\ a(V^2-1),\ [V,a]$ are all compact endomorphisms for
%all $a\in A$. In addition we require that
$\phi(a)\Gamma=\Gamma\phi(a)$, $V\Gamma+\Gamma V=0$.
\end{defn}

The next theorem presents a general result used in \cite{pr}[Appendix] 
about the Kasparov product in the odd case.

\begin{thm} Let $(Y,T)$ be an odd Kasparov module for the
$C^*$-algebras $A,B$. Then (assuming that $T$ has a spectral gap 
around $0$)
the Kasparov product of $K_1(A)$ with the
class of $(Y,T)$ is represented by
$$\la [u],[(Y,T)]\ra= [\ker PuP]-[{\rm coker} PuP]\in K_0(B),$$
where $P$ is the non-negative spectral projection for the self-adjoint 
operator $T$.
\end{thm}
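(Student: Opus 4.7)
My strategy is to reinterpret the Kasparov product with $[(Y,T)]$ as the connecting map of an appropriate $C^*$-algebra extension, and then compute that connecting map by the classical Toeplitz/Fredholm index formula.

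First I would build the Busby/Toeplitz extension determined by $(Y,T)$. Since $T$ has a spectral gap at $0$, the projection $P$ lies in $\mathrm{End}_B(Y)$, and the Kasparov module hypotheses force $[P,\phi(a)] \in \K(Y)$ for every $a\in A$. Compressing to $PY$ and adjoining compacts yields a semisplit extension
\[
0 \to \K(PY) \to \mathcal{T} \to A \to 0,
\]
where $\mathcal{T}$ is the $C^*$-algebra generated inside $\mathrm{End}_B(PY)$ by $\{P\phi(a)P : a\in A\}$ together with $\K(PY)$, and the quotient sends $P\phi(a)P \mapsto a$. Under Kasparov's identification $KK^1(A,B) \cong \mathrm{Ext}^{-1}(A,B)$ this extension represents $[(Y,T)]$, so that the Kasparov product with $[(Y,T)]$ coincides with the boundary map $\partial \colon K_1(A)\to K_0(\K(PY))\to K_0(B)$ of the associated six-term sequence.

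Next I would evaluate $\partial[u]$ directly. Lifting $u$ to $\tilde u := P\phi(u)P + (1-P)$ in the unitization of $\mathcal{T}$, the compactness of $[P,\phi(u)]$ and $[P,\phi(u^*)]$ together with unitarity of $u$ yields
\[
(P\phi(u)P)(P\phi(u^*)P) \equiv P \equiv (P\phi(u^*)P)(P\phi(u)P) \pmod{\K(PY)},
\]
so that $P\phi(u)P$ is invertible modulo compacts, i.e.\ a regular Fredholm operator on $PY$ in the Mingo-Phillips sense. The standard formula for the index map of a semisplit extension then identifies $\partial[u]$ with the Fredholm index of $P\phi(u)P$, which after a compact perturbation guaranteeing closed range is
\[
[\ker P\phi(u)P] - [\mathrm{coker}\, P\phi(u)P] \in K_0(B),
\]
the class being independent of the chosen perturbation. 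The matrix case $u\in M_k(A^+)$ is handled by tensoring $Y$ with $\C^k$ and repeating the argument with $P\otimes 1_k$.

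The hardest part will be the Fredholm theory for operators on Hilbert $C^*$-modules: in contrast to the Hilbert space case, kernels and cokernels need not automatically be finitely generated projective, and one must appeal to the Mingo-Phillips/Kasparov-Skandalis theory to ensure the index is a well-defined element of $K_0(B)$. A subsidiary technical point is matching the sign and grading conventions between the Kasparov product and the boundary map of the extension, but this is standard once the extension picture has been set up.
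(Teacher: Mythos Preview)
The paper does not actually supply its own proof of this theorem: it states the result and defers to \cite{pr}, Appendix (with further detail promised in \cite{KNR}). Your approach---building the Toeplitz extension from the projection $P$, invoking Kasparov's identification $KK^1(A,B)\cong \mathrm{Ext}^{-1}(A,B)$, and then computing the boundary map as the Fredholm index of $PuP$---is exactly the standard argument, and is essentially what one finds in those references. The technical caveats you flag (Hilbert-module Fredholm theory \`a la Mingo--Phillips to make sense of kernel and cokernel classes, and the sign/orientation bookkeeping between the Kasparov product and the connecting map) are precisely the points that need care, and you have identified them correctly.
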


This pairing was studied in \cite{pr}, as well as the relation to
the semifinite local index formula in noncommutative geometry. 
It is also the starting point for this
work. 
More detailed information about the 
$KK$-theory version  of this can be found in \cite{KNR}.

In this paper we will employ unbounded representatives of 
$KK$-classes. 
The theory of unbounded operators on $C^*$-modules that we require
is all contained in Lance's book, \cite{L}, [Chapters 9,10]. We quote
the following definitions (adapted to our situation).

\begin{defn} Let $Y$ be a right $C^*$-$B$-module. A densely defined
unbounded operator $\D:{\rm dom}\ \D\subset Y\to Y$ is a $B$-linear
operator defined on a dense $B$-submodule ${\rm dom}\ \D\subset Y$.
The operator $\D$ is {\bf closed} if the graph
$ G(\D)=\{(x, \D x):x\in{\rm dom}\ \D\}$
is a closed submodule of $Y\oplus Y$.
\end{defn}

If $\D:\mbox{dom}\ \D\subset Y\to Y$ is densely defined and
unbounded, we define the domain of the {\bf adjoint} of $\D$ to be
the submodule:
$$\mbox{dom}\ \D^*:=\{y\in Y:\exists z\in Y\ \mbox{such that}\
 \forall x\in\mbox{dom}\ \D, \la\D x|y\ra_R=\la x|z\ra_R\}.$$
Then for $y\in \mbox{dom}\ \D^*$ define $\D^*y=z$. Given
$y\in\mbox{dom}\ \D^*$, the element $z$ is unique, so
$\D^*:\mbox{dom}\D^*\to Y$, $\D^*y=z$ is well-defined, and moreover
is closed.

\begin{defn}
Let $Y$ be a right $C^*$-$B$-module. A densely defined unbounded
operator $\D:{\rm dom}\ \D\subset Y\to Y$ is {\bf symmetric} if for all
$x,y\in{\rm dom}\ \D$
$$ \la\D x|y\ra_R=\la x|\D y\ra_R.$$
A symmetric operator $\D$ is {\bf self-adjoint} if ${\rm dom}\ \D={\rm
dom}\ \D^*$ (so $\D$ is closed). A densely defined
operator $\D$ is {\bf regular} if $\D$ is closed, $\D^*$ is
densely defined, and $(1+\D^*\D)$ has dense range.
\end{defn}
The extra requirement of regularity is necessary in the $C^*$-module
context for the continuous functional calculus, and is not
automatic, \cite{L},[Chapter 9].

\begin{defn}\label{unbddKasparov} An {\bf odd 
unbounded Kasparov $A$-$B$-module} consists of a
countably generated ungraded right $B$-$C^*$-module $E$, with 
$\phi:A\to
End_B(E)$ a $*$-homomorphism, together with an unbounded
self-adjoint regular operator $\D:{\rm dom}\D\subset E\to E$ such that
$[\D,a]$ is bounded for all $a$ in a dense $*$-subalgebra of $A$ and
$a(1+\D^2)^{-1/2}$ is a compact endomorphism of $E$ for all 
$a\in A$. An {\bf even unbounded Kasparov $A$-$B$-module} 
has, in addition to the previous data, 
 a $\Z_2$-grading
with $A$ even and $\D$ odd, as in Definition \ref{even}.
\end{defn}

\section{$K$-Theory of the Mapping Cone Algebra and pairing with
$KK$-theory}\label{map}

\subsection{The mapping cone}

Let $F\subset A$ be a $C^*$-subalgebra of a $C^*$-algebra $A$.
Recall \cite{Put} that the mapping cone algebra is
$$ M(F,A)=\{f:[0,1]\to A: f\ \mbox{is continuous},\ f(0)=0,\
f(1)\in F\}.$$ The algebra operations are pointwise
addition and multiplication and the norm is the
uniform (sup) norm. There is a natural exact sequence
$$ 0 \to C_0(0,1)\otimes A\sr{i}{\to} M(F,A)\sr{ev}{\to} F\to 0.$$
Here $ev(f)=f(1)$ and $i(g\otimes a)=t\to g(t)a$.  It is well known
that when $F$ is an ideal in the algebra $A$ we have
$K_*(M(F,A))\cong K_*(A/F)$.

We will always be considering the situation where $K_1(F)=0$, as is
the case for graph $C^*$-algebras, though this is not strictly necessary.
When $K_1(F)=0$, the six term sequence in $K$-theory coming
from this short exact sequence degenerates into \be 0\to K_1(A)\to
K_0(M(F,A))\sr{ev_*}{\to}K_0(F)\sr{j_*}{\to} K_0(A)\to
K_1(M(F,A))\to 0.\label{sixterm}\ee
We need to justify the notation $j_*$; namely we need to display the
map $j$ which induces $j_*$.

\begin{lemma}
\label{lm:j}
In the above exact sequence the map $j_*:K_0(F)\to
K_0(A)$ is induced by minus the inclusion map $j:F\to A$
(up to Bott periodicity).
\end{lemma}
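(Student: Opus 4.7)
The plan is to use naturality of the six term exact sequence applied to the comparison with the cone algebra $CA=\{f\colon[0,1]\to A:f(0)=0\}$. Consider the commutative diagram of short exact sequences
\[
\begin{CD}
0 @>>> C_0(0,1)\otimes A @>>> M(F,A) @>{ev}>> F @>>> 0 \\
@. @| @VV{\iota}V @VV{j}V \\
0 @>>> C_0(0,1)\otimes A @>>> CA @>{ev_1}>> A @>>> 0,
\end{CD}
\]
where $\iota$ is the natural inclusion $M(F,A)\hookrightarrow CA$, and commutativity of the right square is precisely the identity $j\circ ev=ev_1\circ\iota$. By naturality of the boundary map in the six term sequences associated with the two rows, the connecting homomorphism of the top row factors as
\[
\partial_{M(F,A)}\;=\;\partial_{CA}\circ j_*\colon K_0(F)\tto K_1(C_0(0,1)\otimes A).
\]

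Next I would exploit the fact that $CA$ is contractible, so $K_*(CA)=0$ and the bottom six term sequence identifies $\partial_{CA}\colon K_0(A)\to K_1(C_0(0,1)\otimes A)$ with the Bott isomorphism, up to an overall sign. Concretely, lifting a projection $p\in A$ to the self-adjoint element $\tilde p(t)=tp\in CA$, the exponential map prescription produces
\[
\exp(2\pi i\tilde p)(t)=(1-p)+e^{2\pi it}p,
\]
which represents the Bott class of $[p]$. Identifying $K_1(C_0(0,1)\otimes A)$ with $K_0(A)$ via this Bott isomorphism therefore turns $\partial_{CA}$ into $\pm\mathrm{id}_{K_0(A)}$, with the overall sign dictated by the standard conventions for the exponential map and Bott periodicity.

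Combining these two steps, the map $j_*$ appearing in the sequence~\eqref{sixterm} is identified, under the suspension isomorphism, with $-j_*$ induced by the inclusion $F\hookrightarrow A$, as claimed. The naturality argument itself is routine; the only real obstacle is the bookkeeping of signs in the Bott-versus-exponential identification, which accounts for the ``minus'' in the statement. No hard analysis is needed beyond the standard machinery of the six term exact sequence.
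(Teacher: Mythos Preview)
Your proposal is correct and essentially the same as the paper's proof. The paper works directly: it lifts $p\in F$ to $tp\in M(F,A)$, applies the exponential formula for the boundary map, and compares $[e^{2\pi it\otimes p}]$ with the paper's convention $Bott([p])=[e^{-2\pi it}\otimes p+1\otimes(1-p)]$ to extract the minus sign. Your naturality diagram with the cone $CA$ is a clean way to package the same computation, but since you still identify $\partial_{CA}$ by lifting $p$ to $tp$ and exponentiating, the core of the argument is identical; the cone comparison just explains why the lift $tp$ (which already lands in $M(F,A)$) does the job.
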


\begin{proof} The map we have denoted by $j_*$ is actually a
composite:
$$j_*:K_0(F)\sr{\p}{\to}K_1(C_0(0,1)\otimes A)\sr{\cong}{\to}K_0(A).$$
The isomorphism here is the inverse of the Bott map $Bott:K_0(A)\to
K_1(C_0(0,1)\otimes A)$, where
$Bott([p])=[e^{-2\pi i t}\otimes p+1\otimes(1-p)]$.
The boundary map $\p$ is defined as follows, \cite[p 113]{HR}. For
$[p]-[q]\in K_0(F)$, we choose representatives $p,q$ over $F$, and
then choose self-adjoint lifts $x,y$ over $M(F,A)$. Then
$e^{2\pi ix},e^{2\pi iy}$ are unitaries over $C(S^1)\otimes A$ which
 are equal to the identity modulo $C_0(0,1)\otimes A$. Then
$$ \p([p]-[q])=[e^{2\pi ix}]-[e^{2\pi iy}]\in K_1(C_0(0,1)\otimes A).$$
Now we choose the particular lifts over $M(F,A)$ given by $x(t)=tp$ and
$y(t)=tq$ (in fact these are $t\otimes j(p)$ and
$t\otimes j(q)$). Both these elements are self-adjoint, vanish at
$t=0$ and at $t=1$ are in $F$. Now
$$\left[e^{2\pi ix}\right]-\left[e^{2\pi iy}\right]=
\left[e^{2\pi it\otimes p}\right]-\left[e^{2\pi it\otimes q}\right]=
-Bott([p]-[q])\in K_1(C_0(0,1)\otimes A).$$
So modulo the isomorphism $Bott:K_0(A)\to K_1(C_0(0,1)\otimes A)$,
$j_*([p]-[q])=-([j(p)]-[j(q)]).$
\end{proof}

We now describe $K_0(M(F,A))$ \cite{Put}. Let
$V_m(F,A)$ be the set of partial isometries $v\in M_m(A)$ such
that $v^*v,\ vv^*\in M_m(F)$. Using the inclusion
$V_m\hookrightarrow V_{m+1}$ given by $v\to v\oplus 0$ we can
define
$$ V(F,A)=\cup_mV_m(F,A).$$
Our aim, following \cite{Put}, is to define a map
$\kappa:V(F,A)\to K_0(M(F,A))$, and
we proceed in steps. First, let $v\in V(F,A)$ and define a 
self-adjoint unitary
$v_1$ via:
$$v_1=\bma 1-vv^* & v\\ v^*& 1-v^*v\ema,$$
that is, $v_1^2=1,\ v_1=v_1^*$. So, $v_1=p_+ - p_{-}$ where
$p_+=\frac{1}{2}(v_1 +1)$ and $p_{-}=\frac{1}{2}(1-v_1)$ are the
positive and negative spectral projections for $v_1.$ Then for $t\in[0,1]$
define
$$ v_2(t)=p_+ +e^{i\pi t}p_{-}$$
so that we have a continuous path of
unitaries from the identity ($t=0$) to $v_1$ ($t=1$).
Observe that $v_2(t)$ is unitary for all $t\in[0,1]$,
$v_2\in C([0,1])\otimes M_{2m}(A)$, $v_2(0)=1$ and $v_2(1)=v_1$.
Now define
$$ e_v(t)=v_2(t)ev_2(t)^*,\ \ \ e=\bma 1 & 0\\0 & 0\ema.$$
Then $e_v(t)$ is a projection over the unitization $\tilde M(F,A)$ of
$M(F,A)$ given by
$$ \tilde M(F,A)=\{f:[0,1]\to \tilde A:f\ \mbox{is continuous},\
f(0)\in \C1,\
f(1)\in \tilde F\}.$$ Thus $[e_v]-[e]$ defines an element of
$K_0(M(F,A))$. So with
$ \kappa(v)=[e_v]-[e]$ we find:

\begin{lemma}\cite[Lemmas 2.2,2.4,2.5]{Put} \label{Im:Put}\\
1) $\kappa(v\oplus w)=\kappa(v)+\kappa(w)$\vspace{-4pt}

2) If $v,w\in V_m(F,A)$ and $\Vert v-w\Vert<(200)^{-1}$ then
$\kappa(v)=\kappa(w)$\vspace{-4pt}

3) If $v\in V_m(F,A)$, $w_1,w_2\in U_m(F)$ then $w_1vw_2\in
V_m(F,A)$, $\kappa(w_1)=\kappa(w_2)=0$,
$\kappa(w_1vw_2)=\kappa(v)$.\vspace{-4pt}

4) For $v\in M_m(F)$ a partial isometry, $\kappa(v)=0$, so,
for $p\in M_m(F)$ a projection, $\kappa(p)=0$. \vspace{-4pt}

5) The map $\kappa:V(F,A)\to
K_0(M(F,A))$ is onto.\vspace{-4pt}

6) Generate an equivalence relation
$\sim$ on $V(F,A)$ by\vspace{-5pt}

(i) $v\sim v\oplus p$ for $v\in V(F,A)$, $p\in M_k(F)$\vspace{-4pt}

(ii) If $v(t),\ t\in[0,1]$ is a continuous path in $V(F,A)$ then
$v(0)\sim v(1)$.\vspace{-4pt}

Then $\kappa:V(F,A)/\sim\to K_0(M(F,A))$ is a well-defined bijection.
\end{lemma}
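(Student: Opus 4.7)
The plan is to prove the six assertions in sequence, leaning on continuity of the construction $v \mapsto e_v$ and on the triviality of $K_0$ of contractible algebras. For claim (1), the direct computation is that $(v \oplus w)_1$ is a block-permutation of $v_1 \oplus w_1$, so the spectral projections $p_\pm$ split and the path $v_2(t)$ splits accordingly; this makes $e_{v \oplus w}$ unitarily equivalent to $e_v \oplus e_w$ via a unitary path in $\widetilde{M(F,A)}$, giving additivity. For claim (2), the bound $\|v - w\| < 1/200$ is used to construct a norm-continuous path of partial isometries in $V_m(F,A)$ from $v$ to $w$, via a standard polar-decomposition perturbation argument for convex combinations of close partial isometries; applying $v \mapsto e_v$ along this path gives a norm-continuous path of projections, hence $[e_v] = [e_w]$ in $K_0(\widetilde{M(F,A)})$.

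For claim (3), if $w \in U_m(F)$ then $w_1 \in M_{2m}(F)$ and $v_2(t) \in M_{2m}(\widetilde F)$, so the entire construction stays in $\widetilde{M(F,F)}$; since $M(F,F) = C_0((0,1]) \otimes F$ is a cone algebra and hence contractible, $K_0(M(F,F)) = 0$ and $\kappa(w) = 0$. The invariance $\kappa(w_1 v w_2) = \kappa(v)$ is then obtained by deforming $w_1, w_2$ to the identity through continuous paths in $U_m(F)$ (after stabilising if needed), which yields a continuous path of partial isometries in $V_m(F,A)$ from $w_1 v w_2$ to $v$, at which point (2) applies. Claim (4) is identical in spirit: for $v \in M_m(F)$ the construction of $e_v$ stays inside $\widetilde{M(F,F)}$, so $\kappa(v) = 0$. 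For claim (5), surjectivity, I would use the exact sequence (\ref{sixterm}): given $\xi \in K_0(M(F,A))$, its image $ev_*(\xi) \in \ker(j_*) \subseteq K_0(F)$ is represented by $[p] - [q]$ with $j(p) \sim j(q)$ in $K_0(A)$, which after stabilisation provides a partial isometry $v \in V_m(F,A)$ realising this difference; the residual freedom lies in the image of $K_1(A) \hookrightarrow K_0(M(F,A))$, and classes in $K_1(A)$ are themselves realised by $\kappa$ applied to unitaries $u \in V_m(F,A)$ with $u^*u = uu^* = 1_F$.

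Claim (6) then assembles everything: well-definedness on $V(F,A)/{\sim}$ is immediate from (1), (2) and (4), surjectivity is (5), and only injectivity remains. This injectivity is the main obstacle I anticipate. Starting from $\kappa(v) = 0$, one must reverse-engineer a continuous path in $V(F,A)$ from $v$ to a projection in some $M_k(F)$ by unwinding the homotopy of projections over $\widetilde{M(F,A)}$ that witnesses the vanishing of $[e_v] - [e]$, pulling it back through the construction $v \mapsto e_v$. This is the delicate step where a careful manipulation of the cone structure (as in \cite{Put}) is essential, and where I would expect to spend the most effort.
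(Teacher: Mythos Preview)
The paper does not prove this lemma: it is stated with a direct citation to \cite[Lemmas 2.2, 2.4, 2.5]{Put} and no argument is given. So there is nothing in the paper to compare your sketch against.

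That said, a brief assessment of your outline on its own merits: parts (1)--(4) and the well-definedness half of (6) are handled along standard lines and are fine as sketched. For (3) you are right that connecting $w_1,w_2$ to $1$ in $U_m(F)$ may require passing to larger matrices; this is harmless because of (1). Your argument for (5), however, invokes the exact sequence \eqref{sixterm}, which in the paper is written under the standing hypothesis $K_1(F)=0$. Putnam's surjectivity statement does not assume this, so if you want the lemma in full generality you should instead argue directly: any class in $K_0(M(F,A))$ is $[p]-[1_n]$ for a projection $p$ over $\widetilde{M(F,A)}$, and one shows that such a $p$ can be deformed to one of the form $e_v$ by using that $p(1)$ is a projection over $\tilde F$ equivalent to $p(0)=1_n$ via a path of unitaries, from which a partial isometry $v$ with the required range and source can be extracted. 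Finally, you correctly identify the injectivity in (6) as the substantive step; that is indeed where Putnam's argument does the real work, and your description of what must be done (reversing the $v\mapsto e_v$ construction along a homotopy of projections) is accurate.
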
\vspace{-3pt}
Hence we may realise $K_0(M(F,A))$ as equivalence classes of partial
isometries in $M_m(A)$ whose source and range projections lie 
in $M_m(F)$.
Observe that when $K_1(F)=0$, $K_1(A)$ embeds in
 $K_0(M(F,A))$  by regarding a unitary (possibly in a
unitization of $A$) as a partial isometry.
We add the following lemmas which we will need later.
\begin{lemma}\label{composeisoms} Let $v,w\in V_m(F,A)$ have
the same source projection, so
$v^*v=w^*w=p$, say. Then
$[v\oplus w^*]=[v]+[w^*]=[v]-[w]=[vw^*].$
\end{lemma}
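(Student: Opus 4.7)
The plan is to chain the three equalities, each reducing to a more basic fact. The first, $[v\oplus w^*]=[v]+[w^*]$, is immediate from additivity (part 1 of Lemma \ref{Im:Put}). The second, $[v]+[w^*]=[v]-[w]$, reduces to showing $[w^*]=-[w]$, equivalently $[w\oplus w^*]=0$. The third, $[v]-[w]=[vw^*]$, reduces (given the second) to a ``composition rule'': for any $a,b\in V_m(F,A)$ with $a^*a=bb^*$, one has $[ab]=[a]+[b]$. Applying this rule with $a=vw^*$ and $b=w$ gives the desired identity: writing $p=v^*v=w^*w$, one checks $(vw^*)^*(vw^*)=wpw^*=ww^*=bb^*$, while $ab=vw^*w=vp=v$, so that $[v]=[vw^*]+[w]$.

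Both remaining statements I would prove by explicit rotation homotopies in $V_{2m}(F,A)$. For the composition rule, consider
$$h(t)=\bma \cos t\cdot ab & -\sin t\cdot a\\ \sin t\cdot b & \cos t\cdot p\ema,\qquad t\in[0,\pi/2],$$
which runs from $ab\oplus p$ at $t=0$ to the off-diagonal matrix $\bma 0 & -a\\ b & 0\ema$ at $t=\pi/2$. The relations $a^*a=bb^*=p$, $ap=a$, and $pb=b$ lead, after a short computation, to
$$h(t)^*h(t)=\bma b^*b & 0\\ 0 & p\ema,\qquad h(t)h(t)^*=\bma aa^* & 0\\ 0 & p\ema,$$
so $h(t)$ stays in $V_{2m}(F,A)$ throughout. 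The off-diagonal endpoint is brought to the diagonal $(-a)\oplus b$ by right-multiplication by the flip unitary in $U_{2m}(F)$ (class-preserving by part 3 of the lemma), and the sign is eliminated via the obvious path $s\mapsto e^{i\pi s}a$ in $V_m(F,A)$ from $a$ to $-a$. The trivial summand $p\in M_m(F)$ is discarded using parts 4 and 6(i) of the lemma.

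The identity $[w\oplus w^*]=0$ is obtained by the companion homotopy
$$g(t)=\bma \cos t\cdot w & \sin t\cdot ww^*\\ -\sin t\cdot w^*w & \cos t\cdot w^*\ema,$$
which connects $w\oplus w^*$ at $t=0$ to $\bma 0 & ww^*\\ -w^*w & 0\ema$ at $t=\pi/2$; the endpoint is a partial isometry with all matrix entries in $F$, so its class is zero by part 4 of Lemma \ref{Im:Put}. The main obstacle is the purely computational verification that both paths $h(t)$ and $g(t)$ remain partial isometries with source and range projections in $M_{2m}(F)$ for every $t$; once the cancellations, which depend critically on the hypothesis $a^*a=bb^*$ (and on the corresponding idempotencies of $w^*w$ and $ww^*$), are tracked carefully, the rest of the argument is a straightforward assembly of the three reductions.
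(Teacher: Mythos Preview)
Your proof is correct. Both homotopies $h(t)$ and $g(t)$ check out as continuous paths in $V_{2m}(F,A)$, and the assembly is sound.

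The paper takes a shorter route: it writes down a single explicit homotopy
\[
V_\theta=\begin{pmatrix} \cos^2(\theta)\,v+\sin^2(\theta)\,p & \cos(\theta)\sin(\theta)\,(w^*-vw^*)\\ \cos(\theta)\sin(\theta)\,(p-v) & \cos^2(\theta)\,w^*+\sin^2(\theta)\,vw^*\end{pmatrix},\qquad \theta\in[0,\pi/2],
\]
running directly from $v\oplus w^*$ to $p\oplus vw^*$, which immediately gives $[v]+[w^*]=[vw^*]$ (the outer equality in the chain); the middle equality $[w^*]=-[w]$ is then extracted as a special case in the subsequent remark. Your decomposition into a composition rule plus an inverse rule is a legitimate alternative, and the composition rule $[ab]=[a]+[b]$ for $a^*a=bb^*$ is a clean reusable statement. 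Note, however, that your second homotopy $g(t)$ is actually unnecessary: your own composition rule applied with $a=w$, $b=w^*$ (here $a^*a=w^*w=(w^*)(w^*)^*=bb^*$) yields $[w]+[w^*]=[ww^*]=0$ directly, since $ww^*\in M_m(F)$. So you could have collapsed your argument to one homotopy as well --- essentially the paper's, up to reparametrisation and the extra flip-and-sign step at the end.
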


{\bf Remark} If $v=p$ we get a proof that $-[w]=[w^*]$.

\begin{proof} The homotopy is given by
$$V_\theta=\bma \cos^2(\theta)v+\sin^2(\theta)p &
\cos(\theta)\sin(\theta)(w^*-vw^*)\\
\cos(\theta)\sin(\theta)(p-v) &
\cos^2(\theta)w^*+\sin^2(\theta)vw^*\ema,
\ \ \ \ \theta\in[0,\pi/2].$$
\end{proof}

\begin{lemma}\label{orthogsum} Suppose $v^*v=p+q$ with $p,q\in F$ 
projections, $p\perp
q$. Then $v=vp+vq$, $vv^*=vpv^*+vqv^*$, $vpv^*\perp vqv^*$ and if we 
assume that
$vpv^*\in F$ then
$[v]=[vp\oplus vq]=[vp]+[vq].$
\end{lemma}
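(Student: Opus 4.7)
The plan is first to establish the algebraic identities via partial-isometry arithmetic, then to exhibit an explicit continuous path in $V_{2m}(F,A)$ joining $v\oplus 0$ to $vp\oplus vq$, and finally to invoke Lemma \ref{Im:Put} to translate this into the $K$-theoretic equality. For the algebra, since $v$ is a partial isometry with initial projection $v^*v=p+q$, we have $v=v(v^*v)=vp+vq$, so $vv^*=vpv^*+vqv^*$ (cross terms vanish because $pq=qp=0$) and $(vpv^*)(vqv^*)=vp(p+q)qv^*=0$. Each of $vp$ and $vq$ is a partial isometry with initial projection $p$ or $q$ in $F$; since $v\in V_m(F,A)$ forces $vv^*\in F$ and the hypothesis supplies $vpv^*\in F$, subtraction gives $vqv^*\in F$, so $vp,vq\in V_m(F,A)$.

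For the homotopy, I will use the two-stage path
$$V_\theta=\begin{pmatrix} vp+\cos\theta\, vq & 0 \\ \sin\theta\, vq & 0 \end{pmatrix},\qquad W_\theta=\begin{pmatrix} vp & 0 \\ \cos\theta\, vq & \sin\theta\, vq \end{pmatrix}, \qquad \theta\in[0,\pi/2].$$
Using $pv^*vp=p$, $qv^*vq=q$, and $pv^*vq=qv^*vp=0$ (all consequences of $v^*v=p+q$ together with $pq=0$), a direct calculation gives $V_\theta^*V_\theta=(p+q)\oplus 0$, constant in $\theta$, while $V_\theta V_\theta^*$ has entries that are real-linear combinations of $vpv^*$ and $vqv^*$, hence lies in $M_{2m}(F)$. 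The same verification works for $W_\theta$, whose source is the projection $\begin{pmatrix} p+\cos^2\theta\,q & \cos\theta\sin\theta\,q \\ \cos\theta\sin\theta\,q & \sin^2\theta\,q \end{pmatrix}\in M_{2m}(F)$. The concatenated path therefore lies in $V_{2m}(F,A)$ and runs from $V_0=v\oplus 0$ through $V_{\pi/2}=W_0$ to $W_{\pi/2}=vp\oplus vq$.

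Applying Lemma \ref{Im:Put}(6)(i) yields $[v]=[v\oplus 0]$, part (6)(ii) applied to the concatenated path gives $[v\oplus 0]=[vp\oplus vq]$, and part (1) converts $[vp\oplus vq]$ into $[vp]+[vq]$, completing the proof. The main subtlety, though ultimately mechanical, is controlling the range projection $V_\theta V_\theta^*$ at intermediate $\theta$: it carries nontrivial off-diagonal entries, and their membership in $F$ relies precisely on the hypothesis $vpv^*\in F$ combined with $vv^*\in F$. Without that hypothesis the path would leave $V_{2m}(F,A)$ and the argument would collapse.
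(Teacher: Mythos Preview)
Your proof is correct and follows essentially the same strategy as the paper: verify the algebraic identities, then exhibit an explicit homotopy in $V_{2m}(F,A)$ from $v\oplus 0$ to $vp\oplus vq$ and invoke Lemma~\ref{Im:Put}. The only difference is cosmetic: the paper accomplishes the homotopy in a single stage via
\[
V_\theta=\begin{pmatrix} vp+vq\cos^2\theta & vq\sin\theta\cos\theta\\ vq\sin\theta\cos\theta & vq\sin^2\theta\end{pmatrix},\qquad \theta\in[0,\pi/2],
\]
which rotates the $vq$ summand diagonally in one move, whereas you rotate the range and then the source in two separate legs. Both paths sit in $V_{2m}(F,A)$ for exactly the reason you identify (the range projection involves only $vpv^*$ and $vqv^*$, both in $F$ under the hypothesis), so the arguments are interchangeable.
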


\begin{proof} The first few statements are simple algebraic consequences
of the hypothesis. The homotopy from $v\sim v\oplus0$ to 
$vp\oplus vq$ is
 $$V_\theta=\bma
vp+vq\cos^2(\theta)&vq\sin(\theta)\cos(\theta)\\vq\sin(\theta)
\cos(\theta)
& vq\sin^2(\theta)\ema,\ \ \ \ \ \theta\in[0,\pi/2].$$
\end{proof}

We will use the following equivalent definition of the
mapping cone algebra, as it is more useful for our intended
applications and agrees with the definition in the classical
commutative case. We let
$$ M(F,A)=\{f:\R_+\to A: f\ \mbox{continuous and 
vanishes at}\ \infty\
\mbox{and}\ f(0)\in F\}.$$
This way of defining the mapping cone algebra
gives an isomorphic $C^*$-algebra
and we will take this as our definition from now on.

\subsection{The pairing in $KK$ for the mapping cone}

Using the Kasparov product, $K_0(M(F,A))$ pairs with
$KK^0(M(F,A),B)$ for any $C^*$-algebra $B$. However, $K_0(M(F,A))$
also pairs with odd $A,B$ Kasparov modules $(Y,V)$ such that the
left action by $f\in F\subset A$ commutes with $V$.
While all our constructions work for such $A,B$ Kasparov modules, we
will restrict in the sequel to $A,F$ Kasparov modules. This will
cause no loss of generality to those wishing to extend these results
to the general case, but is the situation which arises naturally 
in examples.

{\bf Standing Assumptions (SA).} For the rest of this Section, let $v\in
A$ be an isometry with $v^*v,vv^*\in F$ (the same will work for
matrix algebras over $A$, $F$). Let $(Y,V)$ be an odd Kasparov
module for $A,F$ such that the left action of $f\in F\subset A$
commutes with $V =2P-1$ 
%In the applications we have in mind (see
%Definition \ref{Kasparov}), the operator $V$  is self-adjoint 
%and has a spectral gap around $0$ and so we can use
%$2P-1$ in place of $V$ 
where $P$ is the non-negative spectral 
projection
 of $V$. 
 %That is, we assume $V=2P-1.$

To define the pairing we need a preliminary result.

\begin{lemma} Let $(Y,V)$ satisfy {\bf SA}. 
The two projections $ vv^*P$ and $vPv^*$
differ by a compact endomorphism, and consequently  $PvP:v^*vP(Y)\to vv^*P(Y)$
is Fredholm.\end{lemma}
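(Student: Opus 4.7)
The plan is to exploit two ingredients coming from the Kasparov-module axioms together with the standing assumption: $P$ commutes with $F$, and $[P,a]$ is a compact endomorphism for every $a\in A$. In particular, since $v,v^*\in A$, both $[P,v]$ and $[P,v^*]$ are compact. This is all the analytic input the proof will need.

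For the first assertion, observe that both $vv^*P$ and $vPv^*$ are genuinely projections: the former because $vv^*\in F$ commutes with $P$ and each factor is a self-adjoint projection, the latter by a direct computation using $v^*v\in F$ commuting with $P$ together with $vv^*v=v$. The difference is then a single commutator,
\[
vv^*P - vPv^* \;=\; v\bigl(v^*P - Pv^*\bigr) \;=\; -\,v\,[P,v^*],
\]
which is a compact endomorphism.

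For the Fredholm claim I would construct a two-sided parametrix. First I check that $PvP$ really maps $v^*vP(Y)$ into $vv^*P(Y)$: for $\xi = v^*vP\eta$ one has $v^*v\xi = \xi$, and then
\[
vv^*P\,(Pv\xi) \;=\; P vv^*v\xi \;=\; Pv\xi,
\]
using that $vv^*$ commutes with $P$. The natural candidate for a parametrix is $Pv^*P : vv^*P(Y)\to v^*vP(Y)$. Writing $Pv = vP + [P,v]$ and using that $v^*v$ commutes with $P$,
\[
(Pv^*P)(PvP) \;=\; Pv^*vP + Pv^*[P,v]P \;=\; v^*vP + K_1,
\]
with $K_1 := Pv^*[P,v]P$ compact; symmetrically $(PvP)(Pv^*P) = vv^*P + K_2$ with $K_2$ compact. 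Since $v^*vP$ and $vv^*P$ act as the identity on their respective ranges, $PvP$ has a two-sided parametrix modulo compact endomorphisms, hence is Fredholm between the stated complemented $F$-submodules.

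I do not anticipate a real obstacle: all the analytic content is packaged into compactness of the commutators $[P,v]$ and $[P,v^*]$, and the only bookkeeping is verifying that $v^*vP(Y)$ and $vv^*P(Y)$ are complemented $F$-submodules of $Y$, which is immediate from the fact that $v^*vP$ and $vv^*P$ are self-adjoint projections in $\operatorname{End}_F(Y)$.
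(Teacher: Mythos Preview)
Your proof is correct and follows essentially the same approach as the paper: the paper computes $vPv^*=vv^*P+v[P,v^*]$ and then states that $Pv^*P$ is a parametrix for $PvP$, while you supply the explicit verification of the parametrix identities and the observation that $vv^*P$ and $vPv^*$ are genuine projections. The only cosmetic difference is that the paper phrases compactness via $[V,v^*]$ rather than $[P,v^*]$, but since $V=2P-1$ these are equivalent.
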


\begin{proof}
It is a straightforward calculation that
$$vPv^*=vv^*P+v[P,v^*]=vv^*P+\frac{1}{2}v[V,v^*]$$ and, as $[V,v^*]$
is compact, $vv^*P$ and $vPv^*$ differ by a compact endomorphism.
One easily checks that $Pv^*P:vv^*P(Y)\to v^*vP(Y)$ is a parametrix for
$PvP$ and the second statement follows.
\end{proof}

\begin{rems*}
In all the calculations we do here, if $v\in M_k(A)$ then we use
$P_k:=P\otimes 1_k$ in place of $P$: we will {\bf usually} suppress this
inflation notation  in the interests of avoiding notation inflation.
\end{rems*}

\begin{defn}  For $[v]\in K_0(M(F,A))$ and $(Y,2P-1)$ 
satisfying {\bf SA}, define
$$ [v]\times(Y,V)={\rm Index}(PvP:v^*vP(Y)\to vv^*P(Y))=[\ker PvP]-[{\rm
coker} PvP]
\in K_0(F).$$
\end{defn}

We make some general observations.
 
$\bullet$ If $v$ is unitary over $A$, we recover the usual Kasparov 
pairing
between $K_1(A)$ and $KK^1(A,F)$, \cite{KNR}, 
\cite[Appendix]{pr}. Thus
the pairing depends only on the class of $(Y,2P-1)$ in
$KK^1(A,F)$ for $v$ unitary.

$\bullet$ In general the operator $PvP$ does not have closed
range. However the operator
$$ \widetilde{PvP}:=\bma PvP & 0\\ (1-P)vP & 0\ema:\bca v^*vP(Y)
\\ v^*vP(Y)\eca\to \bca
vv^*P(Y)\\ vv^*(1-P)(Y)\eca$$
does have closed range, \cite[Lemma 4.10]{GVF}, and the index is
easily seen to be
$$\mbox{Index}(\widetilde{PvP})=\left[\bca Pv^*(1-P)(Y)\\
(1-P)v^*P(Y)\eca\right] - \left[\bca (1-P)v^*P(Y)\\
(1-P)v^*P(Y)\eca\right].$$ The index of $PvP$ is in fact defined to be
the index of any suitable `amplification' like $\widetilde{PvP}$,
\cite{GVF}, and we see that if the right $F$-module
$Pv^*(1-P)(Y)$ is closed, then the `correction' term $(1-P)v^*P(Y)$
arising from the amplification process cancels out. Since the
$K$-theory class of the index does not in fact depend on the choice
of amplification, we will ignore this subtlety from here on. That
is, we assume without any loss of generality that the various
Fredholm operators we consider satisfy the stronger condition of being
{\bf regular} in the sense of having
a pseudoinverse \cite{GVF}[Definition 4.3]. 
Since we will be
concerned only with showing that certain indices coincide, this will
not affect our conclusions.

$\bullet$ The pairing  depends only on the class of $v$ in
$K_0(M(F,A))$ with the module $(Y,V)$ held fixed, 
in particular it vanishes if $v\in F$. These
statements follow in the same way as the analogous statements for
unitaries, cf \cite[Appendix]{pr}.

$\bullet$ Since addition in the ``Putnam picture'' of $K_0(M(F,A))$
is by direct sum as is addition in the usual picture of $K_0(A)$
it is easy to see that the pairing is additive in the $K_0(M(F,A))$
variable with the module $(Y,V)$ held fixed. So with $(Y,V)$
held fixed we have a well-defined group homomorphism:
$$\times(Y,2P-1) : K_0(M(F,A))\to K_0(A).$$

\subsection{Dependence of the pairing on the choice of $(Y,2P-1)$}

The dependence on the Kasparov module $(Y,2P-1)$ is not
straightforward. For instance, we require that $P$ commute with the
left action of $F$, and so homotopy invariance is necessarily
broken.
%We clarify the exact dependence in the next few lemmas.
%\begin{lemma} In the above situation
%$$ {\rm Index}(PvP:v^*vP(Y)\to vv^*P(Y))=
%-{\rm Index}(Pv^*P:vv^*P(Y)\to v^*vP(Y)).$$
%\end{lemma}
%
%\begin{proof}
%Let $v\in A$ be a partial isometry with range and source projections
%in $F$. Define $u_v\in M_2(A)$ by
%$$ u_v=\bma 1-vv^* & v\\ v^* & 1-v^*v\ema.$$
%Then $u_v$ is a self-adjoint unitary, and consequently $P_2u_vP_2$ is
%self-adjoint and has index zero on $PY\oplus PY$. Now $P(1-vv^*)P$
%maps $P(1-vv^*)Y$ isomorphically onto itself, and similarly for
%$(1-v^*v)P$. So
%$$0=\mbox{Index}(P_2u_vP_2)=\mbox{Index}\left(\bma 0 & PvP\\Pv^*P &
%0\ema:\bca vv^*PY\\ v^*vPY\eca\to \bca vv^*PY\\v^*vPY\eca\right).$$
%Hence we see that
%$$0=\mbox{Index}(vv^*P:vPv^*Y\to vv^*PY)+\mbox{Index}(v^*vP:v^*PvY\to
%v^*vPY),$$
%as required.
%\end{proof}
%
%
%This Lemma we have just proved tells us that the pairing we have defined is
%bilinear, and we leave to the reader the task of showing that the
%pairing descends to unitary equivalence classes of Kasparov
%modules. Hence it is {\bf only} homotopy invariance that is violated.
We now fix $v\in V_m(F,A)$ and show that we can obtain an {\bf even}
Kasparov module $(Y_v,R_v)$ for $(A_v,F):=(vv^*Avv^*,F)$ so that the 
two classes 
$[v]\times(Y,2P-1)$ and $[1_{A_v}]\times[(Y_v,R_v)]$ are equal 
in $K_0(A)$, with the latter being a Kasparov product of {\em genuine} 
$KK$-classes.

The purpose in doing this is to understand the 
homotopy invariance properties of 
$\mbox{Index}(PvP)$ by characterising it as a 
Kasparov product. In this subsection this is 
achieved by creating a 
`smaller' Kasparov module, which depends on $v$. 
In our main theorem, Theorem 5.1, we associate to 
an odd unbounded Kasparov module $(X,\D)$ a 
`larger' even unbounded Kasparov module 
$(\hat{X},\hat\D)$. This latter module is independent 
of $v$ and allows us to characterise, for all 
$[v]\in K_0(M(F,A))$, the class $\mbox{Index}(PvP)$ 
as  the Kasparov product 
$[v]\times[(\hat{X},\hat\D)]$. 

\begin{lemma} With $v,(Y,2P-1)$ as above, the pair
$$ (Y_v,R_v):=\left(\bca vv^*(Y)\\v^*v(Y)\eca,\bma 0 & R_{-}\\
R_{+} & 0 \ema\right)\;\;where\;\;R_{-}=(PvP-(1-P)v)\;\;and\;\;R_{+}=R_{-}^*$$
is an even $(vv^*Avv^*,F)$ Kasparov module for the representation
 $$ \pi(a)=\bma a & 0\\ 0 & v^*av\ema\; for\;a\in vv^*Avv^*.$$
\end{lemma}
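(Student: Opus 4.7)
The plan is to verify the axioms of Definition~\ref{even} applied to $(Y_v,R_v)$ with the natural grading $\Gamma=\operatorname{diag}(1,-1)$ on $Y_v=vv^*(Y)\oplus v^*v(Y)$. Since $vv^*,v^*v\in F$ commute with the left action of $P$, the summands are $F$-invariant countably generated submodules of $Y$; the representation $\pi$ is a $*$-homomorphism because $a=vv^*avv^*$ for $a\in A_v:=vv^*Avv^*$, giving $v^*av\cdot v^*bv=v^*abv$; and $\pi(a)\Gamma=\Gamma\pi(a)$, $R_v\Gamma+\Gamma R_v=0$ are immediate since $\pi$ is diagonal and $R_v$ is off-diagonal. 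Self-adjointness $R_v=R_v^*$ follows from $vv^*v=v$ and $[P,vv^*]=[P,v^*v]=0$, which yield $R_-v^*v=R_-$ and $R_-^*vv^*=R_+$, so $\pi(a)(R_v-R_v^*)=0$.

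The core is to show $\pi(a)(R_v^2-1)$ and $[\pi(a),R_v]$ are compact for every $a\in A_v$. Throughout I will use: (i) $vv^*,v^*v\in F$ commute with $P$ and with $Q:=1-P$; (ii) $[P,a]\in\K$ for every $a\in A$, so in particular $[P,v],[P,v^*]\in\K$; (iii) the preliminary lemma, which gives $vPv^*=vv^*P$ modulo $\K$. Direct expansion using $PQ=QP=0$ yields
\[
R_+R_-=Pv^*PvP+v^*Qv,\qquad R_-R_+=PvPv^*P-PvPv^*Q-QvPv^*P+Qvv^*.
\]
On the bottom summand, $v^*av(R_+R_--v^*v)=v^*av(Pv^*PvP-v^*Pv)$ collapses, after using $v^*av\cdot v^*=v^*a$ (from $avv^*=a$) and (iii), to $-v^*a[P,v]Q$ modulo $\K$, which is compact by (ii); the top summand $a(R_-R_+-vv^*)$ is symmetric, with $avv^*=a$, (i) and two applications of (iii) reducing all four terms to compacts. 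For the commutator, the upper-right entry is $aR_--R_-(v^*av)=aPvP-PvPv^*av-[P,a]v$; here $[P,a]v\in\K$, and applying (iii) to $PvPv^*=vv^*P$ mod $\K$ together with $[P,a]\in\K$ again gives $PvPv^*av=aPv$ mod $\K$, leaving $aPv(P-1)=-a[P,v]Q$, compact by (ii). The lower-left entry follows by adjunction.

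The main obstacle is not a single conceptual step but the combinatorial bookkeeping: $R_\pm$ is not homogeneous in $P$, so each product must be reorganised by inserting $P+Q=1$ and by exploiting the partial isometry identities $vv^*v=v$, $v^*vv^*=v^*$ together with (i)-(iii) to absorb every non-compact remainder. The one technical caveat is the matrix-inflation convention when $v\in M_m(A)$: one replaces $P$ by $P_m=P\otimes 1_m$ throughout, and the three facts (i)-(iii) transfer entrywise without further comment.
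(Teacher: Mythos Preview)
Your proof is correct and complete in substance. The route, however, differs from the paper's. You expand $R_+R_-$, $R_-R_+$ and $[\pi(a),R_v]$ term by term and then absorb each remainder using $[P,v]\in\K$ and $vPv^*\equiv vv^*P$ modulo $\K$. The paper instead observes at the outset that
\[
R_-\equiv (2P-1)v\equiv v(2P-1)\pmod{\K},\qquad R_+\equiv (2P-1)v^*\equiv v^*(2P-1)\pmod{\K},
\]
which is immediate from $[P,v]\in\K$. With this in hand, $R_v^2\equiv\operatorname{diag}(vv^*,v^*v)=1_{Y_v}$ and the commutator computation reduce to one-line checks using $a=vv^*a=avv^*$ and $[P,a]\in\K$. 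Your approach buys nothing extra and is considerably more laborious; the paper's reformulation of $R_\pm$ as $(2P-1)v^{(*)}$ modulo compacts is the organising idea you are implicitly rediscovering piecewise. One minor point: your sentence on self-adjointness conflates two issues. That $R_v=R_v^*$ is automatic from $R_+:=R_-^*$; what actually needs checking (and what the paper states as $R_-v^*v=vv^*R_-$) is that $R_-$ maps $v^*v(Y)$ into $vv^*(Y)$, so that $R_v$ is a well-defined endomorphism of $Y_v$. Your identity $R_-v^*v=R_-$ is half of this; the other half $vv^*R_-=R_-$ follows the same way.
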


\begin{proof} First observe that $vv^*Avv^*$ is always unital, with 
unit $1_{A_v}=vv^*$, and that $\pi(a)$ leaves $Y_v$ invariant for $a\in vv^*Avv^*.$ 
Next, $R_v$ is clearly self-adjoint and moreover, 
$R_{-}v^*v=vv^*R_{-}$. Taking adjoints we obtain $R_{+}vv^*=v^*vR_{+}$
so that $R_v$ also leaves $Y_v$ invariant. Now since $v$ and
 $v^*$ commute with $P$ up to compacts we see that 
\begin{eqnarray}\label{Rminus}R_{-}=(2P-1)v\;(mod \;compacts)&=&v(2P-1)\;(mod\;compacts)
\;\;and\\
\label{Rplus}R_{+}=(2P-1)v^*\;(mod \;compacts)&=&v^*(2P-1)\;(mod\;compacts).
\end{eqnarray}
Hence, 
$$R_v^2= \bma vv^* & 0\\
          0 & v^*v \ema =1_{Y_v}\;\;(mod \;compacts).$$
The compactness of commutators $[R_v,\pi(a)]$ can be reduced
by (\ref{Rminus}) and (\ref{Rplus}) to the equations:
$$a(2P-1)v =(2P-1)vv^*av\;\;\mbox{and}\;\;v^*avv^*(2P-1)=v^*(2P-1)a\;
(mod\;compacts).$$
This completes the proof using $a=vv^*a=avv^*$ and $[P,a]$  compact.
\end{proof}

The following corollary is obvious once we note that
$$ \pi(1_{A_v})=\bma vv^* & 0\\ 0 & v^*v\ema$$
\begin{cor} We have the equality in $K_0(F)$:
$[v] \times (Y,2P-1)=[1_{A_v}] \times [(Y_v,R_v)].$
Hence the pairing $[v]\times (Y,2P-1)$ depends only on $[v]\in
K_0(M(F,A))$ and the class $[(Y_v,R_v)]\in KK^0(vv^*Avv^*,F)$.
\end{cor}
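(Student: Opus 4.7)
The corollary is, as the paper indicates, obvious once we observe the displayed form of $\pi(1_{A_v})$. My plan is to make this observation precise by unwinding both sides of the claimed equality.

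First, I would invoke the standard formula for the Kasparov product of a $K_0(A_v)$-class with the bounded even $KK^0(A_v, F)$-class of $(Y_v, R_v)$: the right-hand side $[1_{A_v}] \times [(Y_v, R_v)]$ is the $K_0(F)$-valued Fredholm index of $\pi(1_{A_v}) R_v \pi(1_{A_v})$ between the graded components of $\pi(1_{A_v}) Y_v$. The displayed formula $\pi(1_{A_v}) = \bma vv^* & 0 \\ 0 & v^*v \ema$ shows that this acts as the identity endomorphism of $Y_v = vv^*(Y) \oplus v^*v(Y)$, since $vv^*$ fixes $vv^*(Y)$ and $v^*v$ fixes $v^*v(Y)$. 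Hence the pairing reduces to the Fredholm index of the off-diagonal block $R_-:v^*v(Y) \to vv^*(Y)$ (or equivalently $R_+$, depending on the grading convention chosen on $(Y_v, R_v)$).

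Second, I would match this with the definition of the left-hand side, $\operatorname{Index}(PvP : v^*vP(Y) \to vv^*P(Y))$, via a block decomposition. Because $P$ commutes exactly with the projections $vv^*, v^*v \in F$, both the source and target of $R_-$ split orthogonally as $v^*vP(Y) \oplus v^*v(1-P)(Y)$ and $vv^*P(Y) \oplus vv^*(1-P)(Y)$. Substituting $R_- = PvP - (1-P)v$ and using that $[P,v]$ and $[P,v^*]$ are compact endomorphisms of $Y$, a direct computation shows that modulo compact endomorphisms $R_-$ is block-diagonal with respect to these decompositions; the $(P,P)$-block is precisely $PvP:v^*vP(Y) \to vv^*P(Y)$, whose index is by definition the left-hand side, while the $(1-P,1-P)$-block is, up to compacts, a restriction of the unitary isomorphism $v : v^*v(Y) \to vv^*(Y)$ to the $(1-P)$-subspaces, and so contributes trivially in $K_0(F)$ once the grading and sign conventions are correctly tracked.

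Combining these two steps yields $\operatorname{Index}(R_-) = \operatorname{Index}(PvP)$ in $K_0(F)$, proving the first assertion. The second assertion of the corollary, that the pairing depends only on $[v] \in K_0(M(F,A))$ and on the class $[(Y_v, R_v)] \in KK^0(vv^*Avv^*, F)$, is then immediate since the right-hand side is manifestly a Kasparov product of a $K_0$-class and a $KK^0$-class, both of which are invariants of their respective equivalence classes. The main care needed is in the bookkeeping of signs and the grading convention so that the $(P,P)$-block contribution is genuinely $\operatorname{Index}(PvP)$ rather than its negative or some $K_0$-shift; beyond this, the proof is the elementary observation about $\pi(1_{A_v})$ highlighted in the corollary's preamble.
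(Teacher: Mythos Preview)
Your block-decomposition argument contains a real error. You correctly note that with respect to the splitting by $P$ and $1-P$, the operator $R_-=PvP-(1-P)v$ is block lower-triangular with diagonal entries $PvP$ and $-(1-P)v(1-P)$, and that the off-diagonal entry $-(1-P)vP=[P,v]P$ is compact. But your claim that the $(1-P,1-P)$-block is ``up to compacts a restriction of the unitary $v$'' and hence ``contributes trivially'' is false: $v$ does not carry $v^*v(1-P)(Y)$ into $vv^*(1-P)(Y)$, so what appears is the \emph{compression} $(1-P)v(1-P)$, a Toeplitz-type operator, not a unitary between the $(1-P)$-summands. Indeed, since $PvP\oplus (1-P)v(1-P)$ agrees modulo compacts with the unitary $v:v^*v(Y)\to vv^*(Y)$, one gets $\operatorname{Index}\bigl((1-P)v(1-P)\bigr)=-\operatorname{Index}(PvP)$, and your decomposition actually yields $\operatorname{Index}(R_-)=\operatorname{Index}(PvP)-\operatorname{Index}(PvP)=0$, not $\operatorname{Index}(PvP)$. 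The classical example $A=C(S^1)$, $F=\C$, $v=e^{i\theta}$, $P$ the Hardy projection already shows this: a direct computation on Fourier modes gives $\ker R_-=\C e_{-1}$ and $\operatorname{coker} R_-=\C e_0$, so $\operatorname{Index}(R_\pm)=0$ while $\operatorname{Index}(PvP)=-1$.

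The paper itself offers only the formula for $\pi(1_{A_v})$ and the word ``obvious'', so there are no further details to compare against. What your computation reveals is that merely identifying the right-hand Kasparov pairing with $\operatorname{Index}(R_\pm)$ and block-decomposing does \emph{not} recover $\operatorname{Index}(PvP)$; since both $\operatorname{Index}(R_+)$ and $\operatorname{Index}(R_-)$ vanish in the example above, this is not a matter of ``sign or grading bookkeeping'' as you suggest. Either the intended grading or normalisation of $(Y_v,R_v)$ differs from the one you (naturally) assumed, or the formula for $R_\pm$ needs adjustment; in any case the step you flag as routine is precisely where the argument breaks.
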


{\bf Remarks}.  In the Kasparov module $(Y_v,R_v)$ there is
a dependence on $v$. 
%Nevertheless,
%the pairing is related to subalgebras of $A$.
This result also shows that we can pair with any subprojection
of $vv^*$ in $F$ instead of $vv^*=1_{vv^*Avv^*}$.
The Kasparov module $(Y_v,R_v)$ is formally reminiscent
of the module obtained by a cap product of an odd module with a
unitary.
The remaining homotopy invariance is for homotopies of
operators on $Y_v$, or operators on $Y$ commuting with $vv^*$.

%Thus while `spectral flow for partial isometries' is not defined as a

%Kasparov product (at least not obviously so) it can be related to a

%Kasparov product, giving this index good, if limited, stability properties.

It should be clear by now that the mapping cone algebra
provides a partial suspension, but mixes odd and even in a fascinating way.
In the next section we relate the even index pairing for $M(F,A)$ to
the odd index pairing described here.

\section{APS Boundary Conditions and Kasparov Modules for the Mapping Cone}
\label{secAPS}

In this Section we begin the substantially new material
by constructing an even Kasparov module for the mapping cone
algebra $M(F,A)$
starting from an odd Kasparov $F$-module $(X,\D)$ for $A.$ In particular we are
assuming that $\D$ is self-adjoint and regular on $X$, has discrete spectrum
and the eigenspaces are closed $F$-submodules of $X$ which sum to $X$.
Our even module $\hat{X}$ is initially defined to be the direct sum of two copies
of the $C^*$-module:
$\E=L^2(\R_+)\otimes_\C X$
which is the completion of the algebraic tensor
product in the tensor product $C^*$-module  norm. That is, we take finite sums of elementary tensors which can naturally be regarded as
functions $f:\R_+\to X$.  The inner product on such $f=\sum_i f_i\otimes
x_i$, $g=\sum_j g_j\otimes y_j$ is defined to be
$$ \la f|g\ra_\E=\sum_{i,j}\int_0^\infty \bar{f_i}(t)g_j(t)dt \,\la x_i|y_j\ra_X,$$
where we have written $\la\cdot|\cdot\ra_X$ for the inner product on $X$. Clearly 
the collection of all continuous compactly supported functions from $\R_+$
to $X$ is naturally contained in the completion of this algebraic 
tensor product and for such functions $f,g$ the inner product is given by:
$$\la f|g\ra_\E=\int_0^\infty \la f(t)|g(t)\ra_Xdt.$$
The corresponding norm is
$$||f||_\E=||\la f|f\ra_\E||^{1/2}.$$

\begin{rems*}  While many elements in the completion $\E$ can be realised as 
functions it may {\bf not} be true that all of $\E$
consists of $X$-valued functions. We also note that the Banach space
$L^2(\R_+,X)$ of functions $f$ defined by square-integrability of 
$t\mapsto \Vert f(t)\Vert$ is {\bf strictly} contained in $\E.$ 
However, we shall show below that the domain of the operator $\p_t\otimes 1$
on $\E$ (free boundary conditions) consists of $X$-valued functions
which are square-integrable in the $C^*$-module sense above.
We will define our operators using APS boundary conditions on the
domains. 
%Initially we need only assume that the operator
%$\D$ on $X$ is regular and self-adjoint. In order to show that the new operator 
%$\hat\D$ on $\hat X$ is self-adjoint and regular and defines an unbounded
%Kasparov module we will need to bring in the assumption that the original 
%operator $\D$ has discrete spectrum.
\end{rems*}
\subsection{Domains}\label{dom}
Let $P$ be the spectral projection for $\D$ corresponding to the 
nonnegative axis and let 
$T_\pm=\pm\p_t\otimes 1+ 1\otimes \D$ ($=\pm\p_t+\D$ for brevity)
with initial domain given by
$$\mbox{dom}\,T_\pm =\{f:\R_+\to X_\D:\; f=\sum_{i=1}^n f_i\otimes x_i,\; f\ 
\mbox{is smooth and compactly supported,}$$ 
$$x_i\in X_{\D},\;P(f(0))=0\ (+\ \mbox{case}),\ 
(1-P)(f(0))=0\ (-\ \mbox{case})\}.$$
 
By smooth we mean $C^\infty$, using one-sided derivatives at
$0\in\R_+$. Then $T_{\pm}:\mbox{dom}\,T_\pm\subset\E\to\E$.
These are both densely defined, and so the operator
$$\hat\D=\bma 0 & T_-\\ T_+ & 0\ema$$
is densely defined on $\E\oplus\E$.
An integration by parts (using the boundary conditions) shows that
$$ (T_\pm f|g)_\E=(f|T_\mp g)_\E,\ \ \ f\in\mbox{dom}\,T_\pm,\
g\in\mbox{dom}\,T_\mp.$$
Hence the adjoints are also densely defined, and so each of these operators is
closable. This shows that $\hat\D$ is likewise closable, and symmetric.

The subtlety noted above, namely that the module $\E$ does not
necessarily consist of functions, forces us to consider some
seemingly circuitous arguments. Basically, to prove
self-adjointness, we require knowledge about domains, and we must
prove various properties of these domains without the benefit of a
function representation of all elements of $\E$. However, we will prove
below a function representation for elements in the natural domain of
$\p_t\otimes 1,$ and therefore in the domains of the closures of $T_{\pm}$
%To begin, we want to investigate the properties of $f\in\E$ which
%lie in the domain of the closures of the operators $T_\pm$.
because if  $\{f_j\}\subset\mbox{dom}\,T_\pm$ is a Cauchy sequence in the norm
of $\E$ such that $\{T_\pm f_j\}$ is also Cauchy then as $T_\pm$ is
closable, the limit $f$ of the sequence $f_j$ lies in the domain of
the closure, and $\lim T_\pm f_j=\overline{T_\pm} f$.

\begin{lemma}
\label{lm:first-lemma}
For $f\in {\rm dom}\,T_{\pm}$, the initial domain, we have:\\
(1) $\la T_{\pm}f|T_{\pm}f\ra=\la(\p_t\otimes 1)f|(\p_t\otimes 1)f\ra_\E
+\la(1\otimes\D)f|(1\otimes\D)f\ra_\E \mp \la f(0)|\D(f(0))\ra_X,$ and\\
(2) $\mp \la f(0)|\D(f(0))\ra_X \geq 0.$
\end{lemma}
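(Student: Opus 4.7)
The plan is to prove (1) by a direct expansion of $\la T_\pm f|T_\pm f\ra_\E$ into four inner-product terms and handling the two ``cross'' terms via an integration-by-parts argument carried out \emph{pointwise in $t$}. Concretely, for $f\in\mbox{dom}\,T_\pm$ I would first write $f=\sum_i f_i\otimes x_i$ with $f_i$ smooth and compactly supported on $\R_+$ and $x_i\in X_\D$, so that both $(\p_t\otimes 1)f$ and $(1\otimes\D)f$ are genuine continuous, compactly supported, $X$-valued functions on $\R_+$, and the $\E$-inner product of any two of them is simply $\int_0^\infty\la\,\cdot\,(t)|\,\cdot\,(t)\ra_X\,dt$. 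Expanding gives
\begin{equation*}
\la T_\pm f|T_\pm f\ra_\E=\la\p_tf|\p_tf\ra_\E+\la\D f|\D f\ra_\E\pm\bigl(\la\p_tf|\D f\ra_\E+\la\D f|\p_tf\ra_\E\bigr).
\end{equation*}

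For the cross term I would use the fact that $\D$ is self-adjoint and regular on $X$, which implies at each fixed $t$ the identity $\la f(t)|\D(\p_tf(t))\ra_X=\la\D f(t)|\p_tf(t)\ra_X$ and hence, by the Leibniz rule for the $X$-valued function $t\mapsto\la f(t)|\D f(t)\ra_X$,
\begin{equation*}
\p_t\la f(t)|\D f(t)\ra_X=\la\p_tf(t)|\D f(t)\ra_X+\la\D f(t)|\p_tf(t)\ra_X.
\end{equation*}
Integrating this from $0$ to $\infty$ and invoking the compact support of $f$ yields $\int_0^\infty\p_t\la f(t)|\D f(t)\ra_X\,dt=-\la f(0)|\D f(0)\ra_X$, which combined with the expansion above gives the claimed identity (the sign $\mp$ arising because in $T_-$ the cross terms enter with a minus sign).

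For (2), I would use the APS boundary conditions. In the $+$ case $Pf(0)=0$ forces $f(0)=(1-P)f(0)$, so that $f(0)$ lies in the range of the negative spectral projection of $\D$. Since $\D$ is self-adjoint and regular, continuous functional calculus yields a self-adjoint square root $S=\sqrt{-\D(1-P)}$ with $S^2=-\D(1-P)$, and since $f(0)\in X_\D\subseteq\mbox{dom}\,S$ I get
\begin{equation*}
-\la f(0)|\D f(0)\ra_X=\la f(0)|S^2f(0)\ra_X=\la Sf(0)|Sf(0)\ra_X\geq 0
\end{equation*}
in $F$. The $-$ case is symmetric: $(1-P)f(0)=0$ forces $f(0)=Pf(0)$, and functional calculus applied to $\D P\geq 0$ gives $\la f(0)|\D f(0)\ra_X\geq 0$ in $F$, which is the required sign for the $\mp=+$ case.

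Neither step is really the ``hard'' part; the only mild subtlety is that elements of $\E$ need not be functions, but this is why the computation is performed on the dense initial domain consisting of finite sums of elementary tensors with smooth compactly supported scalar factor, where all manipulations reduce to the scalar-valued integration-by-parts and to pointwise statements about self-adjointness and functional calculus of $\D$ on $X$.
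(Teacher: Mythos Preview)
Your proposal is correct and follows essentially the same approach as the paper: expand $\la T_\pm f|T_\pm f\ra_\E$ into diagonal plus cross terms, handle the cross terms by integration by parts (the paper does this in the scalar coefficients $f_i$ of the tensor decomposition, you do it via the Leibniz rule for the $F$-valued function $t\mapsto\la f(t)|\D f(t)\ra_X$, which amounts to the same computation), and then invoke the boundary condition together with the sign of $\D$ on the relevant spectral subspace for part~(2). The paper states part~(2) simply as ``$\D$ restricted to the range of $(1-P)$ is negative,'' which is exactly what your explicit square-root construction via functional calculus makes precise.
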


\begin{proof}
We do the case $T_+$; the proof for $T_-$ is the same.
With a little computation it suffices to see:
$$\la(\p_t\otimes1)f|(1\otimes\D)f\ra_\E +\la(1\otimes\D)f|(\p_t\otimes1)f\ra_\E =
-\la f(0)|\D(f(0))\ra_X$$
for $f=\sum_i f_i\otimes x_i$ with $f_i$ compactly supported and 
$f(0)\in \ker P.$ Then, using integration by parts:
\begin{eqnarray*}
\la(\p_t\otimes1)f|(1\otimes\D)f\ra_\E &=& \sum_{i,j}\int_0^\infty (\frac{d}{dt}
\overline{f_i(t)})(f_j(t)) dt\cdot \la x_i|\D x_j\ra_X\\
&=& -\sum_{i,j}\left\{\overline{f_i(0)}f_j(0+\int_0^\infty
\overline{f_i(t)}\frac{d}{dt}f_j(t)dt\right\} \la x_i|\D x_j\ra_X\\
&=& -\la\sum_i f_i(0)x_i|\sum_j f_j(0)\D x_j\ra_X -
\la\sum_i f_i\otimes x_i|\sum_j\p_t f_j\otimes \D x_j\ra_\E\\
&= &-\la f(0)|\D(f(0)\ra_X-\la f|(\p_t\otimes\D)f\ra_\E.\end{eqnarray*}

But, since $\D$ is self-adjoint and $1\otimes\D$ commutes with $\p_t\otimes 1$
we have $$\la f|(\p_t\otimes\D)f\ra_\E = \la(1\otimes\D)f|(\p_t\otimes 1)f\ra_\E$$
and item (1) follows. To see item (2), we have $(1-P)(f(0))=f(0)$ where
$(1-P)=\mathcal{X}_{(-\infty,0)}(\D)$
so we see that $\D$ restricted to the range of $(1-P)$ is negative and therefore
$-\la f(0)|\D(f(0)\ra_X\geq 0$ in our $C^*$-algebra.
\end{proof}

\begin{cor}
\label{cr:closures}
If $\{f_n\}\subseteq {\rm dom}\,(T_\pm)$ is a Cauchy sequence in the initial domain
of $T_\pm$ and $\{T_\pm(f_n)\}$ is also a Cauchy sequence in $||\cdot||_\E$
norm then both $\{(\p_t\otimes 1)(f_n)\}$ and $\{(1\otimes\D)(f_n)\}$ are
also Cauchy sequences in the $||\cdot||_\E$ norm. Therefore, the limit, $f$ of
$\{f_n\}$ in $\E$  which is in the domain of the
closure of $T_\pm$, is also in the domain of the closures of
both $(\p_t\otimes 1)$ and $(1\otimes\D).$
\end{cor}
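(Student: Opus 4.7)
The plan is to extract the two Cauchy estimates directly from the identity in Lemma \ref{lm:first-lemma}, applied to the differences $f_n-f_m$, using positivity in the $C^*$-algebra $F$.

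First, note that $\mbox{dom}\,T_\pm$ is an $F$-submodule (indeed a complex linear subspace), so $g_{n,m}:=f_n-f_m$ still lies in the initial domain of $T_\pm$. Applying Lemma \ref{lm:first-lemma}(1) to $g_{n,m}$ produces
\[
\la T_\pm g_{n,m}|T_\pm g_{n,m}\ra_\E \;=\; A_{n,m}+B_{n,m}+C_{n,m},
\]
where $A_{n,m}=\la(\p_t\otimes 1)g_{n,m}|(\p_t\otimes 1)g_{n,m}\ra_\E$, $B_{n,m}=\la(1\otimes\D)g_{n,m}|(1\otimes\D)g_{n,m}\ra_\E$, and $C_{n,m}=\mp\la g_{n,m}(0)|\D(g_{n,m}(0))\ra_X$. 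The elements $A_{n,m}$ and $B_{n,m}$ are positive in $F$ since they are values of the $\E$-inner product, while $C_{n,m}\geq 0$ in $F$ by Lemma \ref{lm:first-lemma}(2).

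Now the key step is purely $C^*$-algebraic: in any $C^*$-algebra, a sum of positive elements dominates each summand, so $A_{n,m}+B_{n,m}+C_{n,m}\geq A_{n,m}\geq 0$ and hence $\|A_{n,m}\|\leq \|A_{n,m}+B_{n,m}+C_{n,m}\|$. By the defining property of the Hilbert module norm this reads
\[
\|(\p_t\otimes 1)(f_n-f_m)\|_\E^2 \;\leq\; \|T_\pm(f_n-f_m)\|_\E^2,
\]
and the identical argument with $B_{n,m}$ gives $\|(1\otimes\D)(f_n-f_m)\|_\E\leq\|T_\pm(f_n-f_m)\|_\E$. The hypothesis that $\{T_\pm f_n\}$ is Cauchy in $\|\cdot\|_\E$ therefore forces $\{(\p_t\otimes 1)f_n\}$ and $\{(1\otimes\D)f_n\}$ to be Cauchy in $\|\cdot\|_\E$ as well.

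For the last sentence, both $\p_t\otimes 1$ (on the initial domain of smooth compactly supported elementary tensors) and $1\otimes\D$ are symmetric, so their adjoints are densely defined and they are closable; this was already noted in the text preceding Lemma \ref{lm:first-lemma}. Hence, writing $f=\lim f_n\in\E$, both $(\p_t\otimes 1)f_n$ and $(1\otimes\D)f_n$ converge in $\E$, and $f$ lies in the domain of the closure of each operator, with the closures applied to $f$ equal to the respective limits. The only obstacle is really bookkeeping — the work is all done by Lemma \ref{lm:first-lemma} together with the elementary $C^*$-algebra positivity fact above; no further analytic input is needed.
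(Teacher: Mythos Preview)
Your proof is correct and follows exactly the paper's approach: apply Lemma~\ref{lm:first-lemma} to the differences $f_n-f_m$ and use the $C^*$-algebra fact that if a positive element is a sum of positives then its norm dominates the norm of each summand. One small inaccuracy worth noting: $\p_t\otimes 1$ with free boundary conditions is \emph{not} symmetric (integration by parts produces a boundary term and a sign change), though it is closable because its formal adjoint is densely defined; this does not affect your argument, and the paper's own one-line proof does not address closability at all.
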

\begin{proof}
This follows from the lemma and the fact that if $A=B+C$ are all 
positive elements in a
$C^*$-algebra, then $||A||\geq||B||$ and $||A||\geq||C||$. \end{proof}

\begin{lemma}
\label{lm:got-zero}
{\rm (1)}\;\; If $g=\sum_i f_i\otimes x_i$ where the $f_i$ are smooth and 
compactly supported then
$$\la(\p_t\otimes 1)g|g\ra_\E = -\la g(0)|g(0)\ra_X-\la g|(\p_t\otimes 1)g\ra_\E.$$
{\rm (2)}\;\; With $g$ as above
$$||g(0)||_X^2\leq 2||(\p_t\otimes 1)g||_\E\cdot||g||_\E.$$
\end{lemma}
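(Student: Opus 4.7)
The plan is straightforward: part (1) is an integration by parts at the level of elementary tensors, and part (2) then follows by combining (1) with the $C^*$-module Cauchy--Schwarz inequality.

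For (1), I would write $g=\sum_i f_i\otimes x_i$ with each $f_i\in C_c^\infty(\R_+)$, and unfold the inner product definition:
\[
\la(\p_t\otimes 1)g\,|\,g\ra_\E=\sum_{i,j}\int_0^\infty\overline{\p_tf_i(t)}\,f_j(t)\,dt\cdot\la x_i|x_j\ra_X.
\]
Since each $f_i$ vanishes at infinity, the standard one-variable integration by parts on $[0,\infty)$ yields
\[
\int_0^\infty\overline{\p_tf_i(t)}\,f_j(t)\,dt=-\overline{f_i(0)}f_j(0)-\int_0^\infty\overline{f_i(t)}\,\p_tf_j(t)\,dt.
\]
Summing against $\la x_i|x_j\ra_X$ and recognising that $g(0)=\sum_i f_i(0)x_i\in X$ is a well-defined value (each $f_i$ is continuous), the boundary terms reassemble as $-\la g(0)|g(0)\ra_X$ and the remaining integral as $-\la g|(\p_t\otimes 1)g\ra_\E$. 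This gives the identity in (1).

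For (2), I would rearrange (1) as
\[
\la g(0)|g(0)\ra_X=-\la(\p_t\otimes 1)g\,|\,g\ra_\E-\la g\,|\,(\p_t\otimes 1)g\ra_\E,
\]
and take norms in $F$. The left-hand side has norm exactly $\|g(0)\|_X^2$. For the right-hand side I apply the $C^*$-module Cauchy--Schwarz inequality $\|\la a|b\ra\|\leq\|a\|\,\|b\|$ to each term (noting that the two terms are adjoints of one another, so they have the same norm), obtaining
\[
\|g(0)\|_X^2\leq 2\,\|(\p_t\otimes 1)g\|_\E\cdot\|g\|_\E.
\]

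There is really no obstacle here beyond bookkeeping: the only subtlety is the one flagged in the preceding discussion, namely that elements of the completion $\E$ need not be represented by $X$-valued functions; however this issue does not arise for $g$ of the stated form, since the finite sum $\sum_i f_i\otimes x_i$ is literally the function $t\mapsto\sum_i f_i(t)x_i$ and its value at $0$ makes sense pointwise. This is precisely why the lemma is formulated only for such $g$: it will be used in the sequel as the launching point for extending $g\mapsto g(0)$ by continuity to the domain of the closure of $\p_t\otimes 1$.
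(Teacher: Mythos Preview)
Your proposal is correct and matches the paper's approach exactly: the paper simply says item (1) is ``an integration by parts similar to the previous computation'' (i.e.\ the one in Lemma~\ref{lm:first-lemma}) and that item (2) ``follows from item (1) by the triangle and Cauchy--Schwarz inequalities''. Your write-up is in fact more detailed than what the paper provides.
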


\begin{proof}
Item (1) is an integration by parts similar to the previous computation
and item (2) follows from item (1) by the triangle and Cauchy-Schwarz
inequalities.
\end{proof}

\subsection{Elements in ${{\rm dom}(\p_t\otimes 1)}$ are functions.}
\begin{defn}
\label{shifts-projections}
For each $t\in\R_+,$ we define two shift operators $S_t$ and $T_t$ on 
$L^2(\R_+)$ via:
$S_t(\xi)(s)=\xi(s+t)$ and $T_t=S_t^*.$ Clearly
both have norm $1$ and $S_tT_t=1$ and $T_tS_t=1-E_t$ where 
$E_t$ is the projection, multiplication by $\mathcal{X}_{[0,t]}.$
Hence, $S_t\otimes 1$, $T_t\otimes 1$, and $E_t\otimes 1$ are in
$\mathcal{L}(\E)$ and $E_t\otimes 1$ converges
strongly to $1_{\E}$ as $t\to\infty.$
\end{defn}

\begin{lemma}
\label{got-functions}
Let $\p_t\otimes 1$ denote the closed operator on $\E$ with free boundary
condition at $0.$ That is, $\p_t\otimes 1$ is the closure of $\p_t\otimes 1$
defined on the initial domain ${\rm dom}^{\prime}(\p_t\otimes 1)$ consisting
of finite sums of elementary tensors $f\otimes x$ where $f$ is smooth and 
compactly supported. Then,\\
\noindent{\rm (1)} $S_t$ leaves ${\rm dom}(\p_t\otimes 1)$ invariant
and commutes with $\p_t\otimes 1.$\\
\noindent{\rm (2)} If $g\in {\rm dom}^{\prime}(\p_t\otimes 1)$ then for each
$t_0\in\R_+$  $$||g(t_0)||_X^2\leq 2||(\p_t\otimes 1)g||_{\E} ||g||_{\E}.$$
\noindent{\rm (3)} If $g\in {\rm dom}(\p_t\otimes 1)$ and $\{g_n\}$ is a sequence
in ${\rm dom}^{\prime}(\p_t\otimes 1)$ with $g_n\to g$ in $\E$ and
$(\p_t\otimes 1)(g_n)\to (\p_t\otimes 1)(g)$ in $\E$ then there is a 
continuous function $\hat{g}:\R_+\to X$ so that $g_n\to \hat{g}$ uniformly
on $\R_+.$ Moreover $\hat{g}\in C_0(\R_+,X)$ and depends only on $g$,
not on the particular
sequence $\{g_n\}.$ \\
\noindent{\rm (4)} If $g\in {\rm dom}(\p_t\otimes 1)$ and $\hat{g}$ is the function
defined in item {\rm (3)} then for all elements $h\in\E$ which are finite sums
of elementary tensors of the form $f\otimes x$ where $f$ is compactly supported 
and piecewise continuous we have:
$$(g|h)_{\E}=\int_0^\infty \la\hat{g}(t)|h(t)\ra_X dt.$$
\noindent{\rm (5)} If $g\in {\rm dom}(\p_t\otimes 1)$ then:
$$\la g|g\ra_{\E}=\lim_{M\to\infty}\int_0^M \la\hat{g}(t)|\hat{g}(t)\ra_X dt :=
\int_0^\infty \la\hat{g}(t)|\hat{g}(t)\ra_X dt.$$
\end{lemma}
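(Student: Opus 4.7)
My plan begins with the easier items. For item (1), on the initial domain $\mbox{dom}^{\prime}(\p_t\otimes 1)$ the shift $S_t$ sends an elementary tensor $f\otimes x$ to $(S_tf)\otimes x$, preserving smoothness and compact support; differentiation and translation manifestly commute on smooth $f$, so $S_t\otimes 1$ commutes with $\p_t\otimes 1$ on the core, and boundedness of $S_t\otimes 1$ together with closedness of $\p_t\otimes 1$ propagates invariance and commutation to the full domain. For item (2), I would observe that $(S_{t_0}\otimes 1)g$ is again in the initial domain with $((S_{t_0}\otimes 1)g)(0)=g(t_0)$, and apply Lemma \ref{lm:got-zero}(2) to it, using item (1) together with the fact that $S_{t_0}\otimes 1$ is a contraction:
\[
\|g(t_0)\|_X^2 \leq 2\|(\p_t\otimes 1)(S_{t_0}g)\|_\E \|S_{t_0}g\|_\E \leq 2\|(\p_t\otimes 1)g\|_\E \|g\|_\E.
\]

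For item (3), applying (2) to differences $g_n-g_m$ in the initial domain shows $\{g_n(t)\}$ is uniformly Cauchy in $X$ on $\R_+$, giving a continuous limit $\hat g$. Compact support of each $g_n$ together with the uniform convergence forces $\hat g\in C_0(\R_+,X)$: for $\epsilon>0$, choose $n$ with $\sup_t\|g_n(t)-\hat g(t)\|_X<\epsilon$ and then $t$ beyond the support of $g_n$. Independence from the approximating sequence follows by interleaving two candidates into a single Cauchy sequence with a unique pointwise limit. For item (4), on elementary tensors the identity $\la g_n|h\ra_\E=\int_0^\infty\la g_n(t)|h(t)\ra_X dt$ is immediate from the definition of the inner product on $\E$. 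If $\mbox{supp}(h)\subseteq[0,N]$, the integral reduces to one over $[0,N]$; uniform convergence $g_n\to\hat g$ on $[0,N]$ passes the limit through on the right, and $\E$-continuity of the inner product passes it through on the left.

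The main obstacle is item (5), and the key insight is the identification $(E_M\otimes 1)g=\chi_{[0,M]}\hat g$ as elements of $\E$. To establish this, I choose an approximating sequence $\{g_n\}$ as in (3); boundedness of $E_M\otimes 1$ gives $(E_M\otimes 1)g_n\to(E_M\otimes 1)g$ in $\E$, while the function $(E_M\otimes 1)g_n=\chi_{[0,M]}g_n$ converges uniformly on $\R_+$ to $\chi_{[0,M]}\hat g$ with common support in $[0,M]$, which forces convergence in $\E$-norm. Uniqueness of limits delivers the identification, and the inner-product formula for compactly supported $X$-valued functions then yields
\[
\la(E_M\otimes 1)g|(E_M\otimes 1)g\ra_\E = \int_0^M \la\hat g(t)|\hat g(t)\ra_X\, dt.
\]
Since Definition \ref{shifts-projections} asserts $E_M\otimes 1\to 1_\E$ strongly as $M\to\infty$, we have $(E_M\otimes 1)g\to g$ in $\E$, so the left side tends to $\la g|g\ra_\E$, giving $\lim_{M\to\infty}\int_0^M\la\hat g|\hat g\ra dt=\la g|g\ra_\E$. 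The subtlety this circumvents is that a direct attempt to pass to the limit in $\la g_n|g_n\ra_\E=\int_0^{M_n}\la g_n|g_n\ra dt$ requires controlling tails $\int_{M_n}^\infty$ whose behaviour is not obviously small; routing through the projection $E_M\otimes 1$ and its strong limit sidesteps this entirely.
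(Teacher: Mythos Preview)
Your proof is correct and follows essentially the same approach as the paper for items (1)--(4). For item (5) there is a minor organizational difference: the paper computes $\la g\mid E_M g\ra_\E$ by applying item (4) with $h=E_M g_n$ (which is a finite sum of elementary tensors with piecewise continuous compactly supported scalar parts) and then passes to the limit in $n$ and $M$; you instead first identify $(E_M\otimes 1)g$ with $\chi_{[0,M]}\hat g$ and compute $\la E_M g\mid E_M g\ra_\E$. Since $E_M$ is a self-adjoint projection these quantities coincide, and both arguments rest on the same ingredients (uniform convergence on $[0,M]$ and strong convergence $E_M\to 1$), so the difference is cosmetic.
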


\begin{proof}
To see item (1), one easily checks that $S_t\otimes 1$ leaves 
${\rm dom}^{\prime}(\p_t\otimes 1)$ invariant and commutes with $\p_t\otimes 1$
on this space. Since $\p_t\otimes 1$ is the closure of its restriction to
${\rm dom}^{\prime}(\p_t\otimes 1)$ and $S_t\otimes 1$ is bounded the conclusion 
follows by an easy calculation.\\
To see item (2), we apply item (1) and the previous lemma:
\begin{eqnarray*}
||g(t_0)||_X^2&=&||(S_{t_0} g)(0)||_X^2
\leq 2||(\p_t\otimes 1)S_{t_0}(g)||_{\E} ||S_{t_0}(g)||_{\E}\\
&=&2||S_{t_0}(\p_t\otimes 1)(g)||_{\E} ||S_{t_0}(g)||_{\E}\\
&\leq&2||(\p_t\otimes 1)(g)||_{\E} ||g||_{\E}.
\end{eqnarray*} 
To see item (3), apply item (2) to the sequence $\{(g_n -g_m)(t_0)\}$ to see 
that the sequence $\{g_n(t_0)\}$ in $X$ is uniformly Cauchy for $t_0\in\R_+.$
Since we can intertwine two such sequences converging to $g$, we see that
$\hat{g}$ is independent of the particular sequence. That $\hat{g}$ 
vanishes at $\infty$ follows immediately from the uniform convergence.\\
To see item (4), let $\{g_n\}$ be a sequence satisfying the conditions of
item (3). Then for $h$ supported on $[0,M]$ satisfying the conditions of
item (4):
\begin{eqnarray*}
\la g|h\ra_{\E}&=&\lim_{n\to\infty}\la g_n|h\ra_{\E}=\lim_{n\to\infty}\int_0^\infty
\la g_n(t)|h(t)\ra_Xdt\\
&=&\lim_{n\to\infty}\int_0^M\la g_n(t)|h(t)\ra_Xdt=\int_0^M\la\hat{g}(t)|h(t)\ra_Xdt\\
&=&\int_0^\infty\la\hat{g}(t)|h(t)\ra_Xdt.
\end{eqnarray*}
To see item (5), fix $M>0$ and use item (4):
\begin{eqnarray*}
\la g|E_M(g)\ra_{\E}&=&\lim_{n\to\infty} \la g|E_M(g_n)\ra_{\E}
=\lim_{n\to\infty}\int_0^\infty \la\hat{g}(t)|E_M(g_n)(t)\ra_Xdt\\
&=&\lim_{n\to\infty}\int_0^M \la\hat{g}(t)|g_n(t)\ra_Xdt\\
&=&\int_0^M\la\hat{g}(t)|\hat{g}(t)\ra_Xdt.
\end{eqnarray*}
Taking the limit as $M\to\infty$ completes the proof.
\end{proof}

\begin{cor}
\label{got-smooth-functions}
{\rm (1)} If $g\in{\rm dom}(\p_t\otimes 1)^2$ then $(\p_t\otimes 1)(g)$
is also given by a continuous $X$-valued function as above.
{\rm (2)} If $g\in{\rm dom}(\p_t\otimes 1)^n$ for all $n\geq 1$ then
$(\p_t\otimes 1)^n(g)$ is given by a continuous $X$-valued function for 
all $n.$
\end{cor}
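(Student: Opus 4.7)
The plan is to deduce both claims directly from item (3) of Lemma \ref{got-functions}, which already provides a continuous $X$-valued function representing every element of $\mathrm{dom}(\p_t\otimes 1)$. The essential observation is that the hypotheses $g\in\mathrm{dom}(\p_t\otimes 1)^2$ and $g\in\bigcap_n\mathrm{dom}(\p_t\otimes 1)^n$ are by definition statements about when the iterated derivatives $(\p_t\otimes 1)^k g$ lie in $\mathrm{dom}(\p_t\otimes 1)$, so one simply applies the previous lemma to those iterated derivatives rather than to $g$ itself.

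For part (1), since $g\in\mathrm{dom}(\p_t\otimes 1)^2$ the element $h:=(\p_t\otimes 1)g$ lies in $\mathrm{dom}(\p_t\otimes 1)$ by definition. Applying item (3) of Lemma \ref{got-functions} with $h$ playing the role of $g$ yields a continuous function $\hat h\in C_0(\R_+,X)$ representing $h=(\p_t\otimes 1)g$. For part (2), we proceed by induction on $k$: the hypothesis that $g\in\mathrm{dom}(\p_t\otimes 1)^n$ for every $n\ge 1$ means precisely that $(\p_t\otimes 1)^k g$ is defined and belongs to $\mathrm{dom}(\p_t\otimes 1)$ for every $k\ge 0$. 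Applying part (1), or equivalently item (3) of Lemma \ref{got-functions}, to $(\p_t\otimes 1)^k g$ then produces the desired continuous representative at level $k$, for each $k$.

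There is no real obstacle here; essentially all the content sits in the previous lemma and this corollary merely records the bookkeeping about iterated domains. The one minor point worth noting is that item (3) of Lemma \ref{got-functions} requires an approximating sequence from the initial domain $\mathrm{dom}'(\p_t\otimes 1)$ of smooth compactly supported sums of elementary tensors; this is guaranteed by the definition of $\p_t\otimes 1$ as the closure of its restriction to $\mathrm{dom}'(\p_t\otimes 1)$, together with the fact that the approximating function $\hat g$ constructed there depends only on $g$ and not on the choice of sequence, so it is well-defined at each level of iteration.
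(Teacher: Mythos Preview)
Your proposal is correct and is exactly the argument the paper has in mind: the corollary is stated without proof immediately after Lemma~\ref{got-functions}, and the intended justification is precisely that $g\in\mathrm{dom}(\p_t\otimes 1)^2$ forces $(\p_t\otimes 1)g\in\mathrm{dom}(\p_t\otimes 1)$, to which item~(3) of the lemma applies (and inductively for part~(2)). There is nothing to add.
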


\begin{prop}
(1)\;\;If $g\in {\rm dom}\,(\overline{T_\pm})$ the domain of the
closure of $T_\pm$
on its initial domain then $g\in {\rm dom}\,(\overline{\p_t\otimes 1})\cap
{\rm dom}\,(\overline{1\otimes\D})$. Moreover, $g(0)$ is well-defined and
$P(g(0))=0$ in the $T_+$ case while in the $T_-$ case, $(1-P)(g(0))=0$. Furthermore
$$\overline{T_\pm}g=\pm\overline{(\p_t\otimes1)}g + \overline{(1\otimes\D)}g.$$
(2)\;\; If $g\in {\rm dom}\,(\overline{T_\pm})$ as above,
then $g(0)\in {\rm dom}\,(|\D|^{1/2}).$
\end{prop}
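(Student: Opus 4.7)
The plan is to exploit the closure estimates and function-representation lemmas already established in order to pass from the initial domain to the full closure.

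For part (1), given $g\in \mathrm{dom}(\overline{T_\pm})$, choose a Cauchy sequence $\{f_n\}$ in the initial domain with $\{T_\pm f_n\}$ also Cauchy. Corollary \ref{cr:closures} gives immediately that both $\{(\p_t\otimes 1)f_n\}$ and $\{(1\otimes\D)f_n\}$ are Cauchy in $\E$, so $g\in \mathrm{dom}(\overline{\p_t\otimes 1})\cap \mathrm{dom}(\overline{1\otimes\D})$, and the decomposition $\overline{T_\pm}g=\pm\overline{\p_t\otimes 1}\,g+\overline{1\otimes\D}\,g$ follows by passing to the limit in the identity $T_\pm f_n=\pm(\p_t\otimes 1)f_n+(1\otimes\D)f_n$. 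For the boundary condition, since $g\in \mathrm{dom}(\overline{\p_t\otimes 1})$, Lemma \ref{got-functions}(3) provides a continuous representative $\hat g:\R_+\to X$ with $f_n\to\hat g$ uniformly on $\R_+$. In particular $\hat g(0)=\lim_n f_n(0)$ in $X$; since $P$ is bounded and $Pf_n(0)=0$ for every $n$ (in the $T_+$ case; the $T_-$ case is analogous with $1-P$), the boundary condition passes to the limit: $P\hat g(0)=0$. This both defines $g(0)$ and verifies the APS condition.

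For part (2), the crucial input is Lemma \ref{lm:first-lemma}(1). In the $T_+$ case the boundary condition $Pf_n(0)=0$ forces $f_n(0)$ into the range of $1-P$, where the functional calculus for the regular self-adjoint operator $\D$ gives $\D=-|\D|$; hence
$$-\la f_n(0)|\D f_n(0)\ra_X=\la |\D|^{1/2}f_n(0)\,|\,|\D|^{1/2}f_n(0)\ra_X=\big\||\D|^{1/2}f_n(0)\big\|_X^2,$$
and the $T_-$ sign works in the same way. Since all three summands on the right of Lemma \ref{lm:first-lemma}(1) are positive elements of $F$, taking norms yields
$$\big\||\D|^{1/2}(f_n-f_m)(0)\big\|_X^2\leq \|T_\pm(f_n-f_m)\|_\E^2.$$
The right side tends to zero, so $\{|\D|^{1/2}f_n(0)\}$ is Cauchy in $X$. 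Since $f_n(0)\to g(0)$ and $|\D|^{1/2}$ is closed (as a continuous functional calculus element of the regular self-adjoint $\D$), we conclude $g(0)\in \mathrm{dom}(|\D|^{1/2})$.

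The main obstacle is the lifting of the pointwise identities from the initial domain to the closure: ordinary integration by parts and evaluation at $0$ are legitimate only on the initial domain, so one has no a priori right to speak of $g(0)$ for $g\in \mathrm{dom}(\overline{T_\pm})$. What rescues us is the function representation of Lemma \ref{got-functions}, which provides uniformly convergent approximations and hence well-defined boundary values on $\mathrm{dom}(\overline{\p_t\otimes 1})$, coupled with the positivity structure of Lemma \ref{lm:first-lemma}(1), which supplies a uniform estimate on $|\D|^{1/2}f_n(0)$ that is strong enough to survive the limit.
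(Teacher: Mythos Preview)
Your proof is correct and follows essentially the same approach as the paper: part (1) via Corollary \ref{cr:closures} and the function representation of Lemma \ref{got-functions}, and part (2) by combining Lemma \ref{lm:first-lemma} with the identity $\mp\la f(0)|\D f(0)\ra_X=\la|\D|^{1/2}f(0)\,|\,|\D|^{1/2}f(0)\ra_X$ applied to differences $f_n-f_m$. Your explicit mention of the closedness of $|\D|^{1/2}$ to pass to the limit is a point the paper leaves implicit.
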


\begin{proof}
For the first item, by Corollary \ref{cr:closures},
$g\in \mbox{dom}\,(\overline{\p_t\otimes 1})\cap
\mbox{dom}\,(\overline{1\otimes\D}).$ Then, by the previous Lemma $g(0)$ 
is defined. Since $P$ is a bounded operator
on $X$, $P(g(0))=0$ in the $T_+$ case and $(1-P)(g(0))=0$ in the $T_-$ case.
To see item (2), we use part (2) of Lemma \ref{lm:first-lemma} to see that
for $f\in \mbox{dom}\,(T_\pm)$ we have:
$$\mp \la f(0)\;|\;\D(f(0))\ra_X = \la |\D|^{1/2}(f(0))\;|\; |\D|^{1/2}(f(0))\ra_X.$$
If we apply this observation to $f=g_n-g_m$ where $\{g_n\}$ is a Cauchy
sequence in $\mbox{dom}\,(T_\pm)$ we get the conclusion of item (2).
\end{proof}
\begin{rem*}
Note that evaluation at a point is continuous on ${\rm dom}(\p_t\otimes 1)$
in the ${\rm dom}(\p_t\otimes 1)$-norm, but {\bf not} in the module norm.
\end{rem*}
\subsection{Self-adjointness of $\hat\D$ away from the kernel}

To show that $\hat\D$ is self-adjoint we will follow the basic
strategy of \cite{APS1}
and display a  parametrix which is (almost) an exact inverse.  %At this point
%we do {\bf not} 
Note that we assume that $\D$ %arises from an action of $\bf T$ but 
has discrete 
spectrum with eigenvalues $r_k$ for $k\in\bf Z$ where the spectral 
projection of $\D$ 
corresponding to the eigenvalue $r_k$ is denoted by $\Phi_k$. 
We suppose that $r_k$ is increasing with $k$ and if $k>0$ then $r_k>0$, 
and conversely, so that the zero eigenvalue, if it exists, corresponds to the 
index $k=0$.
Moreover, the eigenspaces $X_k=\Phi_k(X)$ are $F$-bimodules 
which sum to $X$
by hypothesis. We note that $X_0=\Phi_0(X)=\mbox{ker}\D.$

We observe that if $f$ is any real-valued function
defined (at least) on $\{r_k: k\in\bf{Z}\}$, the spectrum of $\D$, then
$f(\D)$ is the self-adjoint operator with domain:
$$\{x=\sum_k x_k \in X : \sum_k f(r_k)x_k\; {\rm converges}\; 
{\rm in}\; X\},$$
and is defined on this domain by $f(\D)x=\sum_k f(r_k)x_k.$ The convergence 
condition on the domain is equivalent to $\sum_k |f(r_k)|^2\la x_k|x_k\ra_X$
converges in $F$.

We further note that if $g:\R_+\to X$ is continuous and compactly supported
then for each $k\in\bf{Z}$, the function $g_k:=\Phi_k\circ g :\R_+\to X_k$
is continuous with $supp(g_k)\subseteq supp(g)$ and $g=\sum_k g_k$
converges in $\E$. Furthermore, if $g$ is smooth then so is each $g_k$
and $\p_t(g_k)=(\p_t(g))_k$ and by the previous sentence 
$\p_t(g)=\p_t(\sum_k g_k)=\sum_k\p_t(g_k).$

%\subsubsection{Self-adjointness away from $r_0=0$}

As both $\p_t\otimes 1$ and $1\otimes \D$ leave the subspaces
$L^2(\R_+)\otimes X_k$ invariant, in order to construct parametrices 
$Q_+$ and $Q_-$ for $T_+$ and $T_-$ we can begin by
considering homogeneous solutions $f_k$ to the equation
$$ T_{+,k}f_k=(\p_t+r_k)f_k=g_k$$
where $g_k$ is a smooth compactly supported function with values
in $X_k$ for each $k> 0.$
Setting
$$f_k(t)=Q_{+,k}(g_k)(t)=\int_0^te^{-r_{k}(t-s)}g_k(s)ds=\int_0^\infty
H(t-s)e^{-r_{k}(t-s)}g_k(s)ds,$$
where $H=\mathcal X_{\R_+}$ (the characteristic function of
$\R_+$) is the Heaviside function, we get a solution satisfying 
the boundary
conditions, as the reader will readily confirm.

Observe that for these homogeneous solutions our parametrix is given by
a convolution operator
$$f_k(t)=Q_{+,k}(g_k)(t)=(G_k*g_k)(t):=L_{G_k}g_k(t).$$
Here $G_k(s)=H(s)e^{-r_k s}\in L^1(\R)$, and
$\| G_k\|_1={1}/{r_k}$. Since the operator norm of $L_{G_k}$
on $L^2(\R)$ is bounded by $\|G_k\|_1$, we have
$$ \Vert Q_{+,k}\Vert =\Vert(L_{G_k}\otimes \Phi_k)\Vert_{End\E}\leq \|G_k\|_1\leq
{1}/{r_k}.$$
For $k<0$ we set
$$f_k(t)=Q_{+,k}(g_k)(t)=-\int_t^\infty e^{-r_k(t-s)}g_k(s)ds=
-\int_{-\infty}^\infty
\mathcal X_{(-\infty,0)}(t-s)e^{-r_k(t-s)}g_k(s)ds.$$
The verification that $T_+f_k=g_k$ is again straightforward, and
the solution is an
$L^2$-function with values in $\Phi_k(X)$ since it is given by the
convolution of an $L^1$ function and an $L^2$-function.

Later when we have defined $Q_{+,0}$ we will sum all the $Q_{+,k}$
to obtain the parametrix $Q_+$. At the moment we note that for a 
smooth compactly supported $g$ we have:
$$\left[Q_+(1\otimes (P-\Phi_0))(g)\right](t):=
\left[\sum_{k>0}Q_{+,k}(1\otimes\Phi_k)(g)\right](t)=\left[\sum_{k>0}Q_{+,k}g_k
\right](t)=\sum_{k>0}\int_0^te^{-r_{k}(t-s)}g_k(s)ds.$$
If we formally interchange the sum and the integral we get the equation:
$$\left[Q_+(1\otimes (P-\Phi_0))(g)\right](t)\mbox{`='}
\int_0^t\sum_{k>0}e^{-r_{k}(t-s)}\Phi_k(g)(s)ds=
\int_0^t e^{-\D(t-s)}(P-\Phi_0)(g(s))ds.$$
It is not hard to see that this convolution on the right actually converges to
the expression on the left in the norm of our module $L^2(\R_+)\otimes X.$

Similarly for the equation
$T_{-,k}f_k=(-\p_t+r_k)f_k=g_k$
we have the solutions
$$Q_{-,k}(g_k)(t)=\int_t^\infty e^{-r_k(s-t)}g_k(s)ds=
\int_{-\infty}^\infty \mathcal X_{(-\infty,0)}(t-s)e^{r_k(t-s)}g_k(s)ds,\ \ \ k> 0,$$
$$ Q_{-,k}(g_k)(t)=-\int_0^t e^{r_k(t-s)}g_k(s)ds =-\int_{-\infty}^\infty
H(t-s)e^{r_k(t-s)}g_k(s)ds,\ \ \ k<0.$$ Again this solution is given
by a convolution, and in all cases $k\neq 0$ we get
$\Vert Q_{\pm,k}(1\otimes\Phi_k)\Vert\leq{1}/{|r_k|}$.
We can get a similar operator convolution equation for 
$\sum_{k<0}Q_{+,k}g_k.$

Before proceeding we require a general lemma.

\begin{lemma}
\label{lm:abstract-aps}
Let $Y$ be a $C^*$-$F$-module and $Y_0\subseteq Y$ a dense
$F$-submodule. Let $T:Y_0\to Y_0$ be closable as a module mapping on
$Y$, with closure $\overline{T}$. Suppose there exists a bounded
module mapping $S$ on $Y$ such that
(1) $S(Y_0)\subset Y_0$, and 
(2) $ST=Id_{Y_0}$ and $TS|_{Y_0}=Id_{Y_0}$.
Then $S$ is one-to-one and $\overline{T}=S^{-1}:{\rm Image}\,(S)\to Y$,
${\rm dom}\,\overline{T}={\rm Image}\,(S)$,
$S\circ\overline{T}=Id_{{\rm dom}\,\overline{T}}$, and
$\overline{T}\circ S=Id_{Y}$.
\end{lemma}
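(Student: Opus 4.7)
The plan is to exploit the interplay between boundedness of $S$ and closability of $T$ in three short arguments. First I would prove $S$ is one-to-one. Suppose $Sy=0$ for some $y\in Y$, and pick $y_n\in Y_0$ with $y_n\to y$; then $Sy_n\in Y_0$ by hypothesis (1), $Sy_n\to Sy=0$, and $T(Sy_n)=y_n\to y$ by hypothesis (2). Thus the graph of $T$ contains a sequence $(Sy_n,T(Sy_n))\to(0,y)$, so closability of $T$ forces $y=0$.

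Next I would verify $\mathrm{Image}(S)\subseteq\mathrm{dom}\,\overline{T}$ and simultaneously establish $\overline{T}\circ S=Id_Y$. Given any $y\in Y$, choose $y_n\in Y_0$ with $y_n\to y$. The elements $Sy_n$ lie in $Y_0$ by (1), and $Sy_n\to Sy$ by boundedness of $S$, while $T(Sy_n)=y_n\to y$ by (2). By definition of the closure we conclude $Sy\in\mathrm{dom}\,\overline{T}$ and $\overline{T}(Sy)=y$.

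For the reverse inclusion $\mathrm{dom}\,\overline{T}\subseteq\mathrm{Image}(S)$ and the identity $S\circ\overline{T}=Id_{\mathrm{dom}\,\overline{T}}$, let $z\in\mathrm{dom}\,\overline{T}$ and pick $z_n\in Y_0$ with $z_n\to z$ and $Tz_n\to\overline{T}z$. Now apply $S$: boundedness gives $S(Tz_n)\to S(\overline{T}z)$, while the identity $S T=Id_{Y_0}$ forces $S(Tz_n)=z_n\to z$. Comparing the two limits yields $z=S(\overline{T}z)$, which proves both that $z\in\mathrm{Image}(S)$ and that $S\circ\overline{T}$ acts as the identity on $\mathrm{dom}\,\overline{T}$.

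Combining these three steps, $S:Y\to\mathrm{dom}\,\overline{T}=\mathrm{Image}(S)$ is a bijection and the two compositions with $\overline{T}$ are the respective identities, so $\overline{T}=S^{-1}$ on its domain. I do not expect any serious obstacle here, as the entire argument is an abstract diagram chase using only two facts: (a) the closability hypothesis, in the form ``$x_n\to 0$ in $Y_0$ with $Tx_n$ convergent implies $\lim Tx_n=0$,'' and (b) continuity of the bounded operator $S$. The only point requiring a bit of attention is remembering, when applying $S$ to a Cauchy sequence in $Y_0$, that $S(Y_0)\subset Y_0$ so the resulting sequence stays in the initial domain and $T$ can be applied term by term.
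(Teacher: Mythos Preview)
Your proof is correct and follows essentially the same approach as the paper: both arguments use the boundedness of $S$ together with the identities $ST=Id_{Y_0}$ and $TS|_{Y_0}=Id_{Y_0}$ to pass approximating sequences back and forth between $\mathrm{dom}\,\overline{T}$ and $\mathrm{Image}(S)$. The only cosmetic difference is that you prove injectivity of $S$ directly at the outset via closability, whereas the paper deduces it at the end as an immediate consequence of $\overline{T}\circ S=Id_Y$; your separate argument is slightly redundant but perfectly valid.
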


\begin{proof} This is essentially just a careful check of the definitions
  of the domains and closures in question.
Let $y\in\mbox{dom}\,(\overline{T})$ so there exists a sequence
$\{y_n\}\subset Y_0$ converging to $y$ and $Ty_n\to \overline{T}y$
also. Now, since $S$ is bounded,
$$ y_n=STy_n\to S(\overline{T}y)\ \ \mbox{and}\ \ y_n\to y,$$
so $S(\overline{T}y)=y$ and
$S\circ\overline{T}=Id_{\mbox{dom}\,(\overline{T})}$. This also shows
$\mbox{dom}(\overline{T})\subset\mbox{Image}\,(S)$.

On the other hand, let $y=Sy'\in\mbox{Image}\,(S)$. Then $y'=\lim
z_n$, where $\{z_n\}\subset Y_0$, and so $y=Sy'=\lim Sz_n$. Since
$S:Y_0\to Y_0$, we see that  $\{Sz_n\}\subset
Y_0\subset\mbox{dom}\,(T)$, and so $z_n=TSz_n$ converges to $y'\in
Y$. Hence $y\in\mbox{dom}\,\overline{T}$ and $\overline{T}y=y'$. That
is $\mbox{Image}\,(S)\subset\mbox{dom}\,(\overline{T})$, and so they
are equal. Finally, $\overline{T}Sy'=\overline{T}y=y'$, and as $y'\in
Y$ was arbitrary, $\overline{T}S=Id_Y$. Hence $S$ is one-to-one, and
$\overline{T}=S^{-1}$.
\end{proof}

Returning to the operators $T_{\pm}$ and $Q_{\pm}$ on the module
$\E\ominus (1\otimes \Phi_0)\E$, we have the following preliminary
result. The proof is just a check of the hypotheses of the previous lemma.

\begin{cor}
\label{cr:doms-invses}
For $k\neq 0$, let $\E_k=L^2(\R_+)\otimes X_k$ and $\E_{k,0}\subset\E_k$ 
be the algebraic tensor product of
$$C_{00}^{\infty}(\R_+):=\{g\in C^\infty(\R_+):\;g(0)=0\;\;and\;\; supp(g)\;\;
is\;\;compact\}$$
with $X_k$. That is, $\E_{k,0}=C_{00}^{\infty}(\R_+)\odot X_k.$
Then $T_{\pm,k},\,Q_{\pm,k}$ map $\E_{k,0}$ to itself,
and are mutual inverses there. Hence
${\rm dom}\,(\overline{T_{\pm,k}})={\rm Image}\,(Q_{\pm,k})$, $Q_{\pm,k}$ is
one-to-one,  and the operators $\overline{T_{\pm,k}}$ and $Q_{\pm,k}$
  are mutually inverse (on appropriate subspaces).
\end{cor}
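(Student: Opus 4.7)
The approach is to apply Lemma \ref{lm:abstract-aps} directly with $Y=\E_k$, $Y_0=\E_{k,0}$, $T=T_{\pm,k}|_{\E_{k,0}}$, and $S=Q_{\pm,k}$. Once the hypotheses of that lemma are in place, its conclusions immediately yield injectivity of $Q_{\pm,k}$, the identification $\text{dom}(\overline{T_{\pm,k}})=\text{Image}(Q_{\pm,k})$, and the mutual inversion statement on the appropriate subspaces.

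First I would verify the invariance conditions. For $T_{\pm,k}$, on an elementary tensor $g\otimes x$ with $g\in C_{00}^\infty(\R_+)$ and $x\in X_k$, we have $T_{\pm,k}(g\otimes x)=(\pm g'+r_k g)\otimes x$, which lies in the algebraic tensor product of smooth compactly supported functions with $X_k$. The operator $Q_{\pm,k}$ is bounded on $\E_k$ with $\|Q_{\pm,k}\|\le 1/|r_k|$ via the standard $L^1$-convolution estimate; preservation of $\E_{k,0}$ is then read off from the explicit convolution formulas defining $Q_{\pm,k}$, using that for each combination of sign in $\pm$ and sign of $k$ the integration range and the exponential factor together force the output to be a smooth $X_k$-valued function vanishing at $t=0$ and decaying sufficiently rapidly at infinity.

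Second I would verify the two composition identities on $\E_{k,0}$. The relation $T_{\pm,k}\,Q_{\pm,k}=\text{Id}$ on an elementary tensor $g\otimes x$ reduces to differentiating the defining convolution integral and applying the fundamental theorem of calculus. The reverse identity $Q_{\pm,k}\,T_{\pm,k}=\text{Id}$ follows by integration by parts; the boundary contributions vanish either because $g(0)=0$ (when the boundary term sits at $0$) or because $g$ has compact support (when the relevant boundary term sits at $\infty$). Closability of $T_{\pm,k}$ on $\E_{k,0}$ was already established from the integration-by-parts identity of Lemma \ref{lm:first-lemma}, so the hypotheses of Lemma \ref{lm:abstract-aps} are fully in place.

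The main obstacle is calibrating $\E_{k,0}$ so that both operators genuinely preserve it: taken too literally as $C_{00}^\infty(\R_+)\odot X_k$, the space fails to be preserved by $Q_{\pm,k}$ because the convolution produces exponentially decaying tails rather than strictly compactly supported functions. The natural fix is to interpret $\E_{k,0}$ as an invariant common core that contains this algebraic tensor product and is closed under $Q_{\pm,k}$ (so it consists of smooth $X_k$-valued functions satisfying the relevant vanishing at $t=0$ and with sufficient decay at infinity). This enlarged space is still dense in $\E_k$, still a core for $\overline{T_{\pm,k}}$, and supports all the integration-by-parts manipulations, after which Lemma \ref{lm:abstract-aps} immediately yields the corollary.
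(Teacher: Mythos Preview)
Your approach is exactly the paper's: the paper's entire proof reads ``The proof is just a check of the hypotheses of the previous lemma,'' i.e., of Lemma~\ref{lm:abstract-aps}, which is precisely what you do. Your caution in the final paragraph is well placed: as literally stated, $Q_{\pm,k}$ does \emph{not} preserve $C_{00}^\infty(\R_+)\odot X_k$ (for $k>0$ the output of $Q_{+,k}$ has an exponential tail, and for $k<0$ it need not vanish at $0$), so one must either enlarge the core as you suggest or, equivalently, take $Y_0$ to depend on the sign of $k$ and on $\pm$; the paper's one-line proof glosses over this, and your fix is the right one.
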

We extend this result by another application of Lemma \ref{lm:abstract-aps}:
\begin{cor}
\label{cr:bigger} Let the algebraic direct sum of the $\E_{k,0}$
with $k\neq 0$ be denoted
$$\E_{alg,0}:=\sum_{alg,k\neq0}\E_{k,0}=
\sum_{alg,k\neq0}C_{00}^{\infty}(\R_+)\odot X_k=
C_{00}^{\infty}(\R_+)\odot\sum_{alg,k\neq0} X_k$$
 Define $Q_{\pm}$ on $\E_{alg,0}$ as the algebraic
direct sum of the $Q_{\pm,k}$, and similarly for $T_{\pm}$.  Then
$Q_{\pm}$ extends to an operator on the completion, $\E_0$  
where it is bounded and one-to-one. Moreover, 
$\overline{T_{\pm}}=Q_{\pm}^{-1}:{\rm Image}(Q_{\pm})\to\E_0$ 
so that
$Q_{\pm}\circ\overline{T_{\pm}}=
Id_{{\rm {dom}}\overline{T_{\pm}}}$ and 
$\overline{T_{\pm}}\circ Q_{\pm}=
Id_{\E_0}.$
 We observe that 
$\E=\E_0\oplus (L^2(\R_+)\otimes X_0)$ as an internal orthogonal direct sum.
That is, $\E_0^\perp=(L^2(\R_+)\otimes X_0).$ 
\end{cor}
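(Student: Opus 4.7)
The plan is to apply Lemma~\ref{lm:abstract-aps} with $Y=\E_0$ and $Y_0=\E_{alg,0}$, using $Q_\pm$ as the candidate bounded inverse. Corollary~\ref{cr:doms-invses} already gives, for each $k\neq 0$, that $Q_{\pm,k}$ and $T_{\pm,k}$ preserve $\E_{k,0}$ and are mutually inverse there; summing algebraically yields $Q_\pm(\E_{alg,0})\subseteq\E_{alg,0}$, the identities $Q_\pm T_\pm=T_\pm Q_\pm=\mathrm{id}_{\E_{alg,0}}$, and closability of $T_\pm$ on $\E_0$. The only serious point to establish is that $Q_\pm$ extends to a bounded adjointable operator on the completion $\E_0$; once this is done, Lemma~\ref{lm:abstract-aps} immediately delivers injectivity of $Q_\pm$, the identification $\overline{T_\pm}=Q_\pm^{-1}$ with $\mathrm{dom}\,\overline{T_\pm}=\mathrm{Image}(Q_\pm)$, and the two composition formulas.

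For the boundedness of $Q_\pm$, I would exploit the spectral gap at $0$: since $\D$ has discrete spectrum with $r_k$ monotone in $k$ and $r_0=0$, the nonzero eigenvalues satisfy $\inf_{k\neq 0}|r_k|=\min(r_1,|r_{-1}|)>0$. Combined with the pointwise estimate $\|Q_{\pm,k}\|\leq|r_k|^{-1}$ established earlier, this gives a uniform bound $C:=\sup_{k\neq 0}\|Q_{\pm,k}\|<\infty$. The submodules $\E_k=L^2(\R_+)\otimes X_k$ are mutually orthogonal in $\E$ because the $X_k$ are, and $Q_{\pm,k}$ leaves $\E_k$ invariant. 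For any finite sum $g=\sum_k g_k\in\E_{alg,0}$, orthogonality and the standard $C^*$-module inequality $\la Sx\,|\,Sx\ra\leq\|S\|^2\la x\,|\,x\ra$ (applied termwise with $S=Q_{\pm,k}$) give
\[
\la Q_\pm g\,|\,Q_\pm g\ra_{\E}=\sum_k\la Q_{\pm,k}g_k\,|\,Q_{\pm,k}g_k\ra_{\E}\leq C^2\sum_k\la g_k\,|\,g_k\ra_{\E}=C^2\la g\,|\,g\ra_{\E}
\]
as an inequality in $F$. Taking norms yields $\|Q_\pm g\|_\E\leq C\|g\|_\E$, and hence $Q_\pm$ extends uniquely to an adjointable bounded operator on $\E_0$.

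The decomposition $\E=\E_0\oplus(L^2(\R_+)\otimes X_0)$ then follows directly from the standing hypothesis that the eigenspaces $X_k$ of $\D$ are mutually orthogonal closed $F$-submodules whose sum is dense in $X$: tensoring with $L^2(\R_+)$ preserves orthogonality, so $\E=\bigoplus_k L^2(\R_+)\otimes X_k$ as an internal $C^*$-module direct sum. Density of $C^\infty_{00}(\R_+)$ in $L^2(\R_+)$ identifies the completion of $\E_{alg,0}$ with $\bigoplus_{k\neq 0}L^2(\R_+)\otimes X_k$, whose orthogonal complement in $\E$ is exactly $L^2(\R_+)\otimes X_0$. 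The main obstacle in the whole argument is the boundedness step: it requires both the spectral gap at $0$ to make the supremum finite, and the Hilbert $C^*$-module operator inequality to convert termwise operator-norm bounds into a single global norm bound on the algebraic direct sum, since one cannot simply quote a Hilbert space identity without passing through the $F$-valued inner product.
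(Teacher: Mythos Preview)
Your proof is correct and follows exactly the approach indicated in the paper, which simply states that the result is ``another application of Lemma~\ref{lm:abstract-aps}.'' You have supplied the details the paper leaves implicit: the uniform bound $\sup_{k\neq 0}\|Q_{\pm,k}\|\leq \max(r_1^{-1},|r_{-1}|^{-1})<\infty$ coming from the spectral gap at zero, together with the orthogonality of the $\E_k$ and the $C^*$-module operator inequality, is precisely what is needed to show $Q_\pm$ extends boundedly to $\E_0$ and hence that Lemma~\ref{lm:abstract-aps} applies.
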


\subsection{The adjoint on $L^2(\R_+)\otimes X_0$ and self-adjointness of 
$\hat{\D}$}

On $L^2(\R_+)\otimes X_0$ the operator $T_{+,0}$ becomes
$\p_t\otimes Id_{X_0}$ with boundary conditions $\xi(0)=0$ while
$T_{-,0}=-\p_t\otimes Id_{X_0}$ with free boundary conditions, and it is
well-known that these two operators are mutual adjoints, cf
\cite[page 116]{L}. The parametrix $Q_{+,0}$ for $T_{+,0}$ is given by
$$Q_{+,0}(g)(t)=\int_0^t g(t)dt\;\;\mbox{for}\;\;g\in 
range(\overline{T_{+,0}}),$$
while the parametrix $Q_{-,0}$ for $T_{-,0}$ is given by
$$Q_{-,0}(g)(t)=-\int_t^{\infty} g(t)dt\;\;\mbox{for}\;\;g\in 
range(\overline{T_{-,0}}).$$
Of course, both $Q_{+,0}$ and $Q_{-,0}$ are unbounded operators and on
$L^2(\R_+)\otimes X_0$ we have:
$$\overline{T_{\pm,0}}Q_{\pm,0}=Id_{range(\overline{T_{\pm,0}})}\;\;\mbox{and}
\;\;Q_{\pm,0}\overline{T_{\pm,0}}=Id_{dom(\overline{T_{\pm,0}})}.$$
Letting $Q_{\pm}$ denote the (closure of the) direct sum of all the $Q_{\pm,k}$
we get the parametrix for $\overline{T_{\pm}}.$
\begin{prop}
\label{pr:self-adj} The adjoint of $\overline{T_\pm}:{\rm
dom}\,(\overline{T_\pm})\to\E$ is $\overline{T_\mp}$. Moreover,
$$\overline{T_{\pm}}Q_{\pm}=Id_{range(\overline{T_{\pm}})}\;\;\mbox{and}
\;\;Q_{\pm}\overline{T_{\pm}}=Id_{dom(\overline{T_{\pm}})}.$$

\end{prop}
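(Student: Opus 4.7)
The plan is to exploit the orthogonal decomposition $\E = \E_0 \oplus (L^2(\R_+)\otimes X_0)$, which is invariant under both $\overline{T_\pm}$ and $Q_\pm$, and to argue on each summand separately. The inclusion $\overline{T_\mp} \subseteq (\overline{T_\pm})^*$ is already in hand: the integration by parts identity $(T_\pm f|g)_\E = (f|T_\mp g)_\E$ from Lemma \ref{lm:first-lemma} extends from the initial domains to the closures by the limit argument of Corollary \ref{cr:closures}. So everything reduces to proving the reverse inclusion and to assembling the parametrix identities.

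On the submodule $\E_0$, Corollary \ref{cr:bigger} already provides $\overline{T_\pm} = Q_\pm^{-1}$ with $Q_\pm$ bounded (its norm is controlled by $1/\min_{k\neq 0}|r_k| > 0$, which is finite since $0$ is an isolated eigenvalue of $\D$), and the two parametrix identities hold there by construction. For the adjoint relation, I first compute $Q_\pm^*$ eigenspace by eigenspace. A direct Fubini interchange applied to the explicit integral kernels defining $Q_{\pm,k}$ shows $Q_{+,k}^* = Q_{-,k}$ as bounded operators on $L^2(\R_+)\otimes X_k$ for every $k \neq 0$. Since the $Q_{\pm,k}$ assemble block-diagonally into bounded operators on $\E_0$ and taking adjoints commutes with block-diagonal direct sums, this upgrades to $Q_+^* = Q_-$ on $\E_0$. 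Now invoke the standard fact that for an adjointable, bounded, injective operator $Q$ on a Hilbert $C^*$-module with dense range, the unbounded inverse satisfies $(Q^{-1})^* = (Q^*)^{-1}$. Applied with $Q = Q_\pm$, this yields $(\overline{T_\pm})^* = \overline{T_\mp}$ on $\E_0$.

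On the complementary submodule $L^2(\R_+)\otimes X_0$, the operators $\overline{T_{\pm,0}}$ reduce to $\pm\p_t\otimes Id_{X_0}$ with Dirichlet (for $+$) and free (for $-$) boundary conditions, and their mutual adjointness is a standard module-theoretic fact (see \cite[Chapter 9]{L}). The parametrix identities $\overline{T_{\pm,0}} Q_{\pm,0} = Id_{range(\overline{T_{\pm,0}})}$ and $Q_{\pm,0} \overline{T_{\pm,0}} = Id_{dom(\overline{T_{\pm,0}})}$ are built directly into the definitions of $Q_{\pm,0}$ as antiderivatives, as already noted in the text preceding the proposition. Combining the two summands then yields the full statement on all of $\E$.

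The main obstacle is justifying $Q_+^* = Q_-$ on $\E_0$ in the $C^*$-module setting. Because the inner product is $F$-valued rather than complex, the identity must be checked at that level; eigenspace-by-eigenspace it reduces to the scalar Fubini calculation tensored with the identity on $X_k$, and the passage from the dense algebraic direct sum $\E_{alg,0}$ to all of $\E_0$ is then routine given the uniform boundedness of the $Q_{\pm,k}$ and the continuity of both sides of the inner-product identity. Once this adjointness is established, the rest follows from the abstract closure-inverse formalism already set up in Lemma \ref{lm:abstract-aps}.
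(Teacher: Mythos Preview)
Your proof is correct and follows essentially the same route as the paper: decompose $\E=\E_0\oplus(L^2(\R_+)\otimes X_0)$, handle the $X_0$ piece by the standard $\p_t$ adjointness, and on $\E_0$ reduce to $Q_\pm^*=Q_\mp$ and then pass to the inverse. The only differences are in execution: the paper verifies $Q_\pm^*=Q_\mp$ on $\E_{alg,0}$ not by Fubini on the explicit kernels but by the abstract device of writing $\xi=T_\pm\xi_0$, $\eta=T_\mp\eta_0$ (possible since $T_\pm:\E_{alg,0}\to\E_{alg,0}$ is bijective by Corollary~\ref{cr:bigger}) and invoking the symmetry $(T_\pm\xi_0|\eta_0)=(\xi_0|T_\mp\eta_0)$; and it then deduces $T_\pm^*=T_\mp$ by an explicit graph computation rather than quoting $(Q^{-1})^*=(Q^*)^{-1}$.
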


\begin{proof}
In the following we write $T_\pm$ for the closure of $T_\pm$.
We write $T_\pm=T_\pm(1\otimes\Phi_0)\oplus T_\pm(1_{\E}-(1\otimes\Phi_0))$
and observe from our last comments that $(T_\pm(1\otimes
\Phi_0))^*=T_\mp(1\otimes\Phi_0).$

Restricting to $(1_{\E}-(1\otimes\Phi_0))\E=\E_0$ we have $Q_\pm^*=Q_\mp$. 
To see this,
recall that $Q_\pm$ is bounded, and so it suffices to check on the
dense submodule $\E_{alg,0}$ of Corollary \ref{cr:bigger}. For
$\xi,\,\eta\in\E_{alg,0}$, there is $\xi_0,\,\eta_0\in\E_{alg,0}$ such that
$\xi=T_\pm\xi_0$ and $\eta=T_\mp\eta_0$ ($\xi_0=Q_\pm\xi$ and
similarly for $\eta_0$). Then
\begin{align*}
(Q_\pm\xi|\eta)_\E&=(Q_\pm(T_\pm\xi_0)|T_\mp\eta_0)_\E=(\xi_0|T_\mp\eta_0)_\E\nno &=(T_\pm\xi_0|\eta_0)_\E\quad\mbox{by
symmetry}\nno &=(\xi|Q_\mp\eta)_\E.
\end{align*}
Hence $Q_\pm^*=Q_\mp$ on $(1_{\E}-(1\otimes\Phi_0))\E=\E_0$. In order to 
deduce from this a similar relation for the $T_\pm$ on $\E_0$ we need the 
following general considerations.

For a densely defined module map $T:\E_0\to\E_0$ we have the relation
between graphs
$$G(T^*)=[\nu(G(T))]^\perp=\nu[G(T)^\perp],$$
where $\nu:\E_0\oplus\E_0\to\E_0\oplus\E_0$ is the unitary given by
$\nu(x,y)=(y,-x)$, \cite[page 95]{L}.
Also for one-to-one module maps $Q$, $G(Q^{-1})=\theta(G(Q))$ where
$\theta(x,y)=(y,x)$ and $\theta \nu=-\nu\theta$.
So restricting $T_\pm$ to $\E_0$ we calculate:
\begin{align*} G(T_+^*)&=[\nu(G(T_+))]^\perp=[\nu(G(Q_+^{-1}))]^\perp\nno
&=[\nu(\theta(G(Q_+)))]^\perp=-[\theta(\nu(G(Q_+)))]^\perp=
-\theta[\nu(G(Q_+))^\perp]\nno
&=-\theta[G(Q_+^*)]=-\theta[G(Q_-)]=-[G(Q_-^{-1})]=-[G(T_-)]\nno
&=G(T_-).
\end{align*}
The same proof works for $T_-$, and so $T_\pm^*=T_\mp$ on all of
$\E$.
\end{proof}

The next step is to introduced the notion of {\bf extended solutions}.
In \cite{APS1}, the analogue of our module was introduced as a model
of a (product) neighbourhood of the boundary for a manifold-with-boundary.
Since the interest there, as here, was in the index of the operator
on the whole manifold-with-boundary, it was necessary to modify the
space of solutions considered to account for those functions on the boundary which
extended to interior solutions in a non-trivial way. Such functions are
not $L^2$ on this product description of the boundary, but are
bounded. Nevertheless they contribute to the index, and so we make a
definition.

\begin{defn} Let $(X,\D)$ be an unbounded odd Kasparov $A-F$-module.
Let $\E=L^2(\R_+)\otimes X$ be the $M(F,A)-F$-module defined above.
As seen in Lemma \ref{got-functions}, any element in the domain
of the operator $\p_t\otimes 1$ (free boundary conditions) is given by a 
uniformly continuous $X$-valued function $g$ which vanishes at $\infty$ and
the integral $\la g|g\ra_{\E}=\int_0^\infty \la g(t)|g(t)\ra_X dt$
converges in $F^+.$ We enlarge $\E$ to a space $\hat{\E}$
consisting of formal sums, $f=g+x$ where $g\in\E$ and $x\in X_0.$
For $g\in \rm{dom}(\p_t\otimes 1),$  the element $f=g+x$ is naturally
a function on $\R_+$ where $f(t)=g(t)+x$ and 
$\lim_{t\to\infty} f(t)=x\in X_0.$
We call such an $f$ an {\bf extended $L^2$-function}
and we may regard $f$ as a function $f:\R_+\to X$ 
with a limit:  $\lim_{t\to\infty} f(t):=f(\infty)$  such that
$f-f(\infty)$ is in $L^2(\R_+)\otimes X$ and $f(\infty)\in X_0$, that is,
$\D f(\infty)=0$. Note we reserve the terms 
{\bf extended $L^2$-function} and {\bf extended solution} to the case where
$f(\infty)\neq 0.$
\end{defn}

So, we have a new module $\hat{\E}=\{f=g+x\;|\;g\in\E\;\;\mbox{and}\;\;
x\in X_0\}.$ We let $F$ act on the left and right of this
extra copy of $X_0$ by its natural action. The $F$-valued 
inner product on $\hat{\E}$
is given by:
$$\la f+x|h+y\ra= \la f(t)|h(t)\ra_\E+\la x|y\ra_X.$$
 The left action of $M(F,A)$ on the
extra component $X_0$ is naturally defined to be {\bf zero}
since $M(F.A)$ consists of functions which vanish at $\infty.$ 
However, when we 
extend the left action to the unitization of $M(F,A)$ the added identity
will of course act as the identity on the extra copy of $X_0.$ While
$\D$ naturally acts as {\bf zero} on this extra copy of $X_0$, functions
$f(\D)$ act as multiplication by $f(0)$ so that
in particular, $P$ acts as the identity operator on this copy of $X_0$
and the operator, $\p_t$ naturally extends here as the {\bf zero}
operator. 

 We now modify our earlier definition of $\hat{X}$ to include
$\hat{\E}$ {\bf only} in the second component. Hence, by definition:
$$ \hat{X}=\bma \E\\ \hat{\E}\ema.$$
For the first
component any solution (i.e. element of the kernel of $T_+$)
necessarily vanishes on the boundary, and classically cannot
contribute to the index and the same situation persists in this
noncommutative setting.

%Of course we should write this module as
%\begin{equation}
%\hat{X}=\begin{pmatrix} \E\\ \E\\ X_0\end{pmatrix}
%\label{eq:3by3}
%\end{equation}
%but have chosen the $2\times 2$ format to highlight the $\Z_2$-graded
%structure, and to emphasise that we are thinking about the completion
%of extended $L^2$-functions tensored with $X$.

We extend the action of $T_-$ to a map: $\hat{\E}\to\E$ via
$T_-(f+x)=T_-(f).$ Similarly we extend the action of $T_+$ to a map:
$\E\to\hat{\E}$ via $T_+(f)=T_+(f)+0$ and we extend the definitions of the actions of $Q_{+}$ and $Q_-$.
In order to emphasize the extension of $T_-$ we use the somewhat 
clumsy notation:
$$ \hat\D=\bma 0 & T_-\oplus 0\\ T_+ & 0\ema.$$
The addition of the zero map does not affect the adjointness
properties proved above, and so
$$ (T_-\oplus 0)^*=T_+\quad\mbox{and}\quad T_+^*=T_-\oplus 0.$$
Thus $\hat\D$ is self-adjoint.
% Again, we should write
%$$\hat\D=\begin{pmatrix} 0 & T_- & 0\\ T_+ & 0 & 0\\ 0 & 0
%  &0\end{pmatrix}$$
%relative to the description of $\hat{X}$ in Equation \eqref{eq:3by3},
%but we will continue with the $2\times2$ notation.
We summarise this lengthy discussion.

\begin{prop}\label{prop:selfadj} Let $X$ be a right $C^*$-$F$-module, and
$\D:{\rm dom}\D\subset X\to X$ be a self-adjoint regular operator with
discrete spectrum. Then the
operator
$$\hat{\D}=\bma 0 & (-\p_t\otimes 1+1\otimes\D)\oplus 0\\
\p_t\otimes 1+1\otimes\D & 0\ema\;\;{\rm defined\;\; on}\;\; 
\bca \E\\ \hat{\E}\eca$$
satisfying APS boundary conditions as above is self-adjoint
and regular on $ \hat{X}=(\E\oplus \hat{\E})^T.$
\end{prop}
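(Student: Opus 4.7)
The plan is as follows. Self-adjointness of $\hat\D$ on the stated domain is already essentially established in the discussion immediately preceding the statement: the identities $(T_-\oplus 0)^*=T_+$ and $T_+^*=T_-\oplus 0$ follow from Proposition \ref{pr:self-adj} (which gives $\overline{T_\pm}^*=\overline{T_\mp}$ on $\E$) together with the trivial observation that pairing the extra $X_0$ summand of $\hat\E$ against $\E$ contributes zero on both sides of the adjoint relation. What remains is regularity, equivalently surjectivity of $\hat\D\pm i$ on all of $\hat X$.

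The key structural observation is that the spectral projections $\Phi_k$ of $\D$ commute with both $\p_t\otimes 1$ and the boundary projection $P$, so they induce an orthogonal decomposition $\hat X = \hat X^{\perp}\oplus\hat X^{0}$ preserved by $\hat\D = \hat\D^{\perp}\oplus\hat\D^{0}$, where $\hat X^{\perp}$ is built from $\bigoplus_{k\neq 0} X_k$ and $\hat X^{0}$ from $X_0$. On $\hat X^{\perp}$ the parametrices $Q_\pm$ of Corollary \ref{cr:bigger} are bounded and satisfy $Q_\pm^* = Q_\mp$, so the off-diagonal operator
$$\hat Q^{\perp} := \bma 0 & Q_+\\ Q_- & 0\ema$$
is a bounded self-adjoint everywhere-defined two-sided inverse of $\hat\D^{\perp}$. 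From this, regularity of $\hat\D^{\perp}$ is immediate: the equation $(1+(\hat\D^{\perp})^2)y = x$ is solved by $y = (\hat Q^{\perp})^2(1+(\hat Q^{\perp})^2)^{-1}x$, since $1+(\hat Q^{\perp})^2 \geq 1$ is a bounded self-adjoint element with bounded inverse in the endomorphism algebra.

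On the remaining piece $\hat X^{0}$, the operator $\hat\D^{0}$ is the external tensor product of the classical scalar half-line APS operator on $L^2(\R_+)\oplus(L^2(\R_+)\oplus\C)$ --- built from $\pm\p_t$ with Dirichlet boundary condition on $\p_t$, free boundary condition on $-\p_t$, and extended by zero on the extra $\C$ --- with the identity on the $F$-module $X_0$. The scalar operator is self-adjoint on its natural Hilbert-space domain by the classical half-line APS analysis (\cite{APS1}), and since it acts on a Hilbert space it is automatically regular. Stability of self-adjointness and regularity under external tensoring with the identity on a $C^*$-module (see \cite{L}, Chapter 10) then transports these properties to $\hat\D^{0}$, and combining the two pieces gives the claim on all of $\hat X$.

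The principal obstacle is precisely the piece $\hat X^{0}$: there the parametrix $Q_{\pm,0}$ is unbounded, reflecting the contribution of extended $L^2$-solutions and forcing the adjunction of an extra copy of $X_0$ to $\hat\E$ in the first place, so the clean bounded-inverse argument of the nonzero-spectrum piece is unavailable. A self-contained alternative to the tensoring lemma is to solve the resolvent equation on $\hat X^0$ explicitly: from the matrix equation $(\hat\D^{0}+i)\bca f \\ (g,x)\eca = \bca u\\ (v,w)\eca$, read off $x = -iw$, eliminate $f = -iu - i\p_t g$, and reduce to the scalar equation $(1-\p_t^2)g = \p_t u - iv$ on $\R_+$ with the Neumann condition $g'(0)=0$ inherited from the Dirichlet condition on $f$, whose bounded Green's function $G(t,s) = \tfrac{1}{2}(e^{-|t-s|}+e^{-(t+s)})$ tensored with the identity on $X_0$ furnishes the required inverse.
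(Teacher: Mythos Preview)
Your proof is correct and follows essentially the same approach as the paper: both split $\hat X$ via $\Phi_0$, use the bounded parametrix on the nonzero-spectrum piece (your formula $(\hat Q^\perp)^2(1+(\hat Q^\perp)^2)^{-1}$ is the paper's surjectivity argument for $(R+1)R^{-1}$ with $R=Q_+Q_-$, repackaged), and reduce the zero-spectrum piece to the scalar half-line operator tensored with $1_{X_0}$, with the extra $X_0$ summand handled trivially since $\hat\D$ is zero there. One caveat on your self-contained alternative for $\hat X^0$: solving $(\hat\D^0+i)$ directly and eliminating $f=-iu-i\p_t g$ is problematic since $\p_t u$ need not exist for arbitrary $u\in L^2(\R_+)\otimes X_0$, and even when it does the Dirichlet condition $f(0)=0$ yields $g'(0)=-u(0)$ rather than $g'(0)=0$; the paper's explicit route (given immediately after the proposition) works with $1+(\hat\D^0)^2$ instead, which decouples cleanly into independent Dirichlet and Neumann problems $(1-\p_t^2)f_j=g_j$ on $\R_+$ with bounded Green's kernels.
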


\begin{proof}
It remains only to show that $\hat\D$ is regular, namely $(1+\hat\D^2)$ has
dense range. We begin with
$\hat{\D}$ restricted to $(\E\oplus\E)^T$. We restrict ourselves further
to the invariant subspace $(\E_0\oplus\E_0)^T.$ To this end let $R=Q_+Q_-$. 
This is a bounded, positive endomorphism on $\E_0$
which is injective and has dense range (both $Q_+,\,Q_-$ are injective
with dense range, and are mutual adjoints by Proposition
\ref{pr:self-adj}). Hence the (unbounded) densely defined operator
$R^{-1}=(Q_+Q_-)^{-1}=Q_-^{-1}Q_+^{-1}=T_-T_+$ on $\E_0$
is a one-to-one positive operator which is onto. 
As the operator $R+1$ is bounded, positive
and (boundedly) invertible, it is surjective. Thus on
$\mbox{dom}\,(T_-T_+)$ consider the operator
$$(R+1)R^{-1}=1+R^{-1}=1+T_-T_+.$$
This is the composition of two surjective operators and so is
surjective
% and so $1+T_-T_+$ is surjective 
(on $\E_0$). Similar
comments apply to $1+T_+T_-$ (on $\E_0$). Thus $(1+\hat\D^2)$ restricted to
(its domain in) $(\E_0\oplus\E_0)^T$ maps onto $(\E_0\oplus\E_0)^T.$

Next, inside $\E$, we have $\E_0^\perp=L^2(\R_+)\otimes X_0$ and $\hat\D$ on
$(\E_0^\perp\oplus\E_0^\perp)^T$ is just 
$\bma 0 & -\p_t\\ \p_t & 0\ema \otimes 1_{X_0}.$
As regularity is automatic on $(L^2(\R_+)\oplus L^2(\R_+))^T$, 
we have regularity on all of $(\E\oplus\E)^T$.
Now, on $X_0\hookrightarrow\hat{\E}$, $\hat\D$ is defined as zero, so
$(1+\hat\D^2)|_{X_0}=1_{X_0}$, which is surjective. Putting the
pieces together, $1+\hat\D^2$ is surjective on $\hat{X}$.
\end{proof}

For use in the next proposition, we consider a more explicit discussion of 
regularity.
So we consider the equation
$$ \bma 1+T_-T_+ & 0\\ 0 & 1+T_+T_-\ema\bca
f_1\\f_2\eca=\bma 1-\p_t^2+\D^2 & 0\\ 0 & 1-\p_t^2+\D^2\ema\bca
f_1\\f_2\eca=\bca g_1\\g_2\eca.$$ Here we initially suppose each of
$(g_1,g_2)^T$ is in $C_{00}^{\infty}(\R_+)\odot\sum_{alg} X_k$.
With the exception of the
extra kernel term, such pairs are dense in $\hat X.$
We need to find $f=(f_1,f_2)^T$ in the domain of
$\hat\D^2$ satisfying this equation. In solving this equation we may
therefore assume that all terms are homogeneous, meaning that the general
solution is built from functions that map $\R_+$ to a single
eigenspace for $\D$, corresponding to the eigenvalue $r_k$. Thus the
equation we must solve, for given $(g_1,g_2)^T\in\hat{X}$, is
$$  \bma 1-\p_t^2+r_k^2 & 0\\ 0 & 1-\p_t^2+r_k^2\ema\bca
f_1\\f_2\eca=\bca g_1\\g_2\eca.$$ 
The boundary conditions are
$$ r_k\geq0\ \ \ \left\{\begin{array}{l}f_1(0)=0\\
  ((-\p_t+r_k)f_2)(0)=0\end{array}\right.,\quad
\qquad r_k<0\ \ \ \left\{\begin{array}{l}f_2(0)=0\\
((\p_t+r_k)f_1)(0)=0\end{array}\right.$$ 
We use the notation $\widehat{r_k}:=(1+r_k^2)^{1/2}$
as this term appears so often. The solution for $f_1$ is
$$ f_1(t)=(2\widehat{r_k})^{-1}\left(\int_t^\infty
e^{\widehat{r_k}(t-w)}g_1(w)dw +\int_0^t
e^{-\widehat{r_k}(t-w)}g_1(w)dw\right) +Ae^{-\widehat{r_k}t},$$ 
where for
$$r_k\geq 0,\ \ \ A=\frac{-1}{2\widehat{r_k}}\int_0^\infty
e^{-w\widehat{r_k}}g_1(w)dw, \ \ \mbox{and for }r_k<0,\ \ \ A=
\frac{1}{2\widehat{r_k}}\frac{\widehat{r_k}+r_k}{\widehat{r_k}-r_k}\int_0^\infty
e^{-w\widehat{r_k}}g_1(w)dw. $$ 
Observe that in terms of the Heaviside function $H$:
%$H(t)=\mathcal X_{[0,\infty)}$ for the Heaviside function and
%$H^\perp=1-H=\mathcal X_{(-\infty,0)}$ then
 \begin{align*}f_1(t)&=
\frac{1}{2\widehat{r_k}}\left(\int_{-\infty}^\infty
H^\perp(t-w)e^{\widehat{r_k}(t-w)}g_1(w)dw\right.\nno &+\left.
\int_{-\infty}^\infty H(t-w)e^{\widehat{r_k}(t-w)}g_1(w)dw
+\left\{\begin{array}{ll} -\langle
e^{-\widehat{r_k}\cdot},g_1(\cdot)\rangle
e^{-\widehat{r_k}t} & r_k\geq 0\\
+\frac{\widehat{r_k}+r_k}{\widehat{r_k}-r_k}\langle
e^{-\widehat{r_k}\cdot},g_1(\cdot)\rangle e^{-\widehat{r_k}t} &
r_k< 0\end{array}\right.\right). \end{align*} The point of this
observation is that it displays the integral as a convolution by an
$L^1$-function, plus a rank one operator, namely a multiple of the projection 
onto span$\{e^{-\widehat{r_k}t}\}$. Thus $f_1$ is an $L^2$-function.

For $f_2$ the situation is analogous. We have
$$ f_2(t)=(2\widehat{r_k})^{-1}\left(\int_t^\infty
e^{\widehat{r_k}(t-w)}g_2(w)dw +\int_0^t
e^{-\widehat{r_k}(t-w)}g_2(w)dw\right) +Be^{-\widehat{r_k}t},$$ 
where for
$$r_k< 0,\ \ \ B=\frac{1}{2\widehat{r_k}}\int_0^\infty
e^{-w\widehat{r_k}}g_2(w)dw, \mbox{ and for }  r_k\geq 0,\ \ \ B=
\frac{1}{2\widehat{r_k}}\frac{\widehat{r_k}-r_k}{\widehat{r_k}+r_k}\int_0^\infty
e^{-w\widehat{r_k}}g_2(w)dw. $$ 
Now we consider elements of $\hat X$
which only have a nonzero component in $X_0$. For such elements
$(0,0+x)^T$ we have
$$(1-\p_t^2+\D^2)x=(1-0+0)x=x,$$
so we have surjectivity for such elements. Now write a general
$g=(g_1,g_2+x)^T\in\hat{X}$ as
$$g=\bca g_1\\g_2+0\eca+\bca 0\\0+ x\eca.$$
Then the above solutions show that for any $g$ in a dense subspace of $\hat{X}$,
we can find
$f\in\mbox{dom}\,\hat\D^2$ with $(1+\hat\D^2)f=g$. Hence, we
have a second proof that $\hat\D$ is regular which we now exploit.

In the next result  APS boundary conditions 
mean that $\hat\D$ is defined on those
$\xi=(\xi_1\oplus\xi_2)^T$ in $(\E\oplus\hat{\E})^T$ such that
$ \hat\D\xi\in\hat{X},\ \ P\xi_1(0)=0,\ \ (1-P)\xi_2(0)=0.$
This is all well defined thanks to Lemma \ref{got-functions}.

\begin{prop}\label{thmone} Let $(X,\D)$ be an ungraded unbounded
Kasparov module for $C^*$-algebras $A,F$ with $F\subset A$ a
subalgebra satisfying $\overline{A\cdot F}=A$. Suppose that $\D$
also commutes with the left action of $F\subset A$, and that $\D$
has discrete spectrum. Then there is an unbounded graded Kasparov module
$$(\hat{X},\hat\D)=\left(\bca
\E \\ \hat{\E}\eca,
\bma 0 & T_-\\
T_+ & 0\ema\right)=\left(\bca
L^2(\R_+)\otimes X \\ \widehat{L^2(\R_+)\otimes X}\eca,
\bma 0 & -\p_t+\D\\
\p_t+\D & 0\ema\right)$$
(with APS boundary conditions) for
the mapping cone algebra $M(F,A)$. 
\end{prop}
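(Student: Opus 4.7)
The strategy is to verify, in turn, the four remaining ingredients of an even unbounded Kasparov module (Definition \ref{unbddKasparov}): the $\Z_2$-grading together with evenness of the $M(F,A)$-action and oddness of $\hat\D$; the existence of the left action as a $*$-homomorphism on $\hat X$; the boundedness of $[\hat\D,f]$ on a dense $*$-subalgebra; and finally the compactness of $f(1+\hat\D^2)^{-1/2}$. Self-adjointness and regularity of $\hat\D$ are already in Proposition \ref{prop:selfadj}.

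First I would set the grading $\Gamma=\operatorname{diag}(1,-1)$ on $\hat X=\E\oplus\hat\E$; since $M(F,A)$ acts diagonally (by pointwise multiplication on $\E$ and by zero on the extra $X_0$-summand of $\hat\E$), the left action is even, and by construction $\hat\D$ anticommutes with $\Gamma$. One then checks that this action is a $*$-homomorphism into $\operatorname{End}_F(\hat X)$: for $f\in M(F,A)$ the pointwise multiplier preserves the $F$-valued inner product because $F$ acts on the right of $X$ and commutes with the left $A$-action.

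Next, for bounded commutators, I would take as dense $*$-subalgebra $\mathcal M\subset M(F,A)$ the functions $f:\R_+\to \mathcal A$ that are smooth and compactly supported with $f(0)$ in a dense $*$-subalgebra of $F$ on which $[\D,\cdot]$ is bounded (here $\mathcal A\subset A$ is the Lipschitz algebra of the given Kasparov module $(X,\D)$). For such $f$ and $\xi$ in the initial domain of $T_\pm$, the product $f\xi$ still satisfies the APS boundary condition because $f(0)\in F$ commutes with $\D$ and hence with the spectral projection $P$. On this domain one computes, component by component,
\begin{equation*}
[\hat\D,f]\,\xi \;=\; \bma 0 & [-\p_t,f]+[\D,f]\\ [\p_t,f]+[\D,f] & 0\ema\xi \;=\; \bma 0 & -\dot f+[\D,f]\\ \dot f+[\D,f] & 0\ema\xi,
\end{equation*}
where both $\dot f$ (bounded because $f$ is smooth and compactly supported) and $[\D,f]$ (bounded at each $t$ with uniform bound by compact support) act as bounded endomorphisms. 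Extending by continuity gives boundedness of $[\hat\D,f]$ on $\mbox{dom}\,\hat\D$ for all $f\in\mathcal M$.

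The main difficulty, and where I expect to spend most of the effort, is showing $f(1+\hat\D^2)^{-1/2}\in\mathcal K_F(\hat X)$ for all $f\in M(F,A)$. By density and the $C^*$-ideal property of compacts it suffices to treat $f$ of the form $f=\chi\otimes a$ with $\chi\in C_c(\R_+)$ and $a\in A$ (together with the diagonal unit acting on the $X_0$-summand, where $\hat\D$ is zero and the resolvent is the identity, producing a finite-rank piece that is plainly compact). For such $f$ I would use the explicit parametrix for $(1+\hat\D^2)^{-1}$ computed above: on each homogeneous sector $L^2(\R_+)\otimes X_k$, $k\neq 0$, it is given by convolution with an $L^1$-kernel of $L^1$-norm $O(\widehat{r_k}^{-1})$, plus a rank-one correction $\langle e^{-\widehat{r_k}\cdot},\,\cdot\,\rangle e^{-\widehat{r_k}t}$. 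Multiplying on the left by $\chi$ converts each convolution into an operator on $L^2(\R_+)$ whose kernel has compact support in $t$, hence is Hilbert--Schmidt (compact) on $L^2(\R_+)$; tensoring with the action of $a$ on the fibre $X_k$ and using that $a(1+\D^2)^{-1/2}$ is compact on $X$ (the given Kasparov-module hypothesis on $(X,\D)$) to sum over $k$ yields a compact endomorphism of $\E$. The key technical step is to control the sum over $k$: the $\widehat{r_k}^{-1}$ decay of the parametrix norms together with the operator-norm convergence of $\sum_k a\Phi_k\widehat{r_k}^{-1}$ to $a(1+\D^2)^{-1/2}$ (which is compact by hypothesis) lets one approximate $f(1+\hat\D^2)^{-1}$ uniformly by finite-rank-in-$k$ truncations, each of which is compact by the preceding argument. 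Taking $f=\chi\otimes a$ as above then gives compactness of $f(1+\hat\D^2)^{-1}$, and hence of $f(1+\hat\D^2)^{-1/2}=f(1+\hat\D^2)^{-1}(1+\hat\D^2)^{1/2}\cdot(1+\hat\D^2)^{-1/2}$ by writing $(1+\hat\D^2)^{-1/2}=g(\hat\D)$ for a bounded $g$ and using that compacts form an ideal.
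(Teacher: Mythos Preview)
Your plan follows the paper's proof closely: both rely on Proposition~\ref{prop:selfadj} for self-adjointness/regularity, both check that $f(0)\in F$ preserves the APS boundary condition, both compute $[\hat\D,f]$ on elementary tensors, and both prove compactness of $f(1+\hat\D^2)^{-1}$ by writing the resolvent on each eigenspace $L^2(\R_+)\otimes X_k$ as convolution-plus-rank-one and summing.

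Two points deserve tightening. First, your last sentence---``$f(1+\hat\D^2)^{-1/2}=f(1+\hat\D^2)^{-1}(1+\hat\D^2)^{1/2}\cdot(1+\hat\D^2)^{-1/2}$''---is a tautology and does not give the reduction you want, since $(1+\hat\D^2)^{1/2}$ is unbounded. The paper instead uses that $f(1+\hat\D^2)^{-1/2}$ is compact iff $f(1+\hat\D^2)^{-1}f^*=\bigl(f(1+\hat\D^2)^{-1/2}\bigr)\bigl(f(1+\hat\D^2)^{-1/2}\bigr)^*$ is compact, and the latter follows from compactness of $f(1+\hat\D^2)^{-1}$ by the ideal property. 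Second, your phrase ``tensoring with the action of $a$ on the fibre $X_k$'' is imprecise because $a$ does not preserve $X_k$; what you actually have is $\chi(L_{g_k}+R_k)\otimes a\Phi_k$ with $a\Phi_k:X\to X$. This operator \emph{is} compact, since $a\Phi_k=a(1+\D^2)^{-1/2}\cdot\widehat{r_k}\Phi_k$ is compact times bounded, so your truncations are compact as claimed. The paper obtains the same conclusion via the factorisation $a=a_1 b$ with $b\in F$ (using the hypothesis $\overline{A\cdot F}=A$), which has the advantage that $b\Phi_k$ preserves $X_k$ and so the truncations are genuine direct sums $\oplus_k C_k$; this makes the tail estimate $\sup_{|k|>N}\Vert C_k\Vert\to 0$ immediate. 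Your route works too, once you factor the tail as $(\chi\otimes a)\sum_{|k|>N}(L_{g_k}+R_k)\otimes\Phi_k$ and observe that the second factor is a direct sum with norm $\sup_{|k|>N}\Vert L_{g_k}+R_k\Vert\to 0$.
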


\begin{proof} The most important observation is that the left action
of $M(F,A)$ on $\hat{X}$ preserves the APS boundary condition, and therefore
the domain of $\hat\D$  because for every $f\in M(F,A)$,
$f(0)\in F$ and hence commutes with the spectral projections defining the
boundary conditions. We note that to see that the action  of $M(F,A)$ 
on $\hat X$ is by {\bf bounded} module maps requires the strong boundedness
property of all adjointable mappings \cite{L} Proposition 1.2. 
We let $\A\subset A$ be the 
$*$-subalgebra of
$A$ such that for all $a\in\A$, $[\D,a]$ is bounded (on $X$) and
$a(1+\D^2)^{-1/2}$ is a compact endomorphism of $X$. We define the algebra
$$ \cM(F,\A)=\{f:\R_+\to\A:\
f(0)\in F\ \mbox{and}\ f\in C_0^\infty(\R_+)\;\;\mbox{and}\;\;[\hat\D,f]
\;\;\mbox{is\;\;bounded}\}.$$
We observe that the *-algebra of finite sums: 
$$\{\sum_if_i\otimes a_i: f_i\in C^\infty(\R_+)\;\;\mbox{and}\;\; 
f_i(0)=0\;\;\mbox{if}\;\;a_i\not\in F\}$$ 
is dense in $M(F,A)$ and is a *-subalgebra of $\cM(F,\A)$.

By Proposition \ref{prop:selfadj}, the operator $\hat\D$ is regular and
self-adjoint, so we
may employ the continuous functional calculus \cite{L},
to prove that $f(1+\hat{\D}^2)^{-1/2}$ is a compact endomorphism. It suffices to
show that
$f(1+\hat{\D}^2)^{-1}$ is compact. To see this, observe that 
$f(1+\hat{\D}^2)^{-1/2}$ is compact if and only if
$$ f(1+\hat{\D}^2)^{-1}f^*=f(1+\hat{\D}^2)^{-1/2}(1+\hat{\D}^2)^{-1/2}f^*$$
is compact
and this follows if $f(1+\hat{\D}^2)^{-1}$ is compact.
The latter follows by observing that from
our second proof of Proposition \ref{prop:selfadj} we have that each 
diagonal entry of
$$f(1+\hat{\D}^2)^{-1}\bma (1\otimes \Phi_k) & 0\\
0 & (1\otimes \Phi_k) \ema:=f(1+\hat{\D}^2)^{-1}((1\otimes \Phi_k)\otimes1_2)$$
can be expressed as a finite sum of
terms of the form $f(L_{g_k}\otimes\Phi_k)+f(R_k\otimes\Phi_k)$
where $L_{g_k}$ is convolution by an $L^1$-function and $R_k$ is a
rank one operator. 
We consider a single elementary tensor in the above subalgebra of $\cM(F,\A)$:
$f=h\otimes a$, where $a=a_1\cdot
b$, where $b\in F$ and $a_1\in A$. For such an elementary tensor the
diagonal entry is $(h\cdot L_{g_k}+h\cdot R_k)\otimes a_1\cdot b\Phi_k.$
Since $g_k$ is in $\LL^1$, the product $h\cdot L_{g_k}$ is a compact operator 
on $L^2(\R_+)$, and of course $hR_k$
is compact. Since $b(1+\D^2)^{-1/2}$ is a compact endomorphism on
$X$, it is straightforward to check that  $b\Phi_k$ is a compact
endomorphism. So as $End^0_{F}(L^2(\R_+)\otimes X)=End^0_\C(L^2(\R_+))\otimes
End^0_F(X)$, \cite{RW}[Corollary 3.38], the endomorphism
$$ B_k:= f((L_{g_k}+R_k)\otimes \Phi_k)=
(1\otimes a_1)( h(L_{g_k}+R_k)\otimes b\Phi_k) =(1\otimes a_1)C_k$$
is compact: indeed each $C_k$ is compact on $L^2(\R_+)\otimes X_k$. 
The importance of this
description is that $f(1+\hat{\D}^2)^{-1}=(1\otimes a_1)(\oplus_k
C_k)$ is a {\em direct sum} of compacts on $\oplus_k (L^2(\R_+)\otimes X_k)$ 
times the bounded operator $(1\otimes a_1)$.

The operator norm of $L_{g_k}$ on $L^2(\R_+)$ 
is bounded by the $L^1$-norm of $g_k$, and so
$$ \Vert L_{g_k}\Vert_{op}\leq\Vert
g_k\Vert_1=(1+r_k^2)^{-1/2}.$$ The norm of the rank
one operator $R_k$ on $L^2(\R_+)$ is given by Cauchy-Schwarz
as
$$ \Vert R_k\Vert_{op}\leq (2(1+r_k^2))^{-1}.$$
(This inequality is unaffected by multiplication by
$(\widehat{r_k}+|r_k|)/(\widehat{r_k}-|r_k|)$, 
so can be applied to both $r_k<0$ and
$r_k\geq 0$). Hence
\begin{align*} \Vert C_k\Vert_{op} &\leq \Vert h\Vert_{op}
\Vert L_{g_k}\Vert_{op} \Vert b\Vert_{op} + \Vert h\Vert_{op} \Vert
R_k\Vert_{op} \Vert b\Vert_{op}\nno &\leq \Vert h\Vert_{op} \Vert
b\Vert_{op}((1+r_k^2)^{-1/2}+(2(1+r_k^2))^{-1}).
\end{align*}
Since $1+r_k^2\to\infty$ as $|k|\to\infty$, the sequence of compact
endomorphisms $\{(1\otimes a_1)\sum_{-N}^NC_k\}$
converges in norm to $f(1+\hat{\D}^2)^{-1}$, which is therefore
compact. Since an arbitrary $f\in \cM(F,\A)$ is the norm limit of finite sums 
$\sum f_j\otimes a_j$ we see that $f(1+\hat{\D}^2)^{-1}$ is compact 
for general $f$ in the mapping cone algebra.

We can now show that we do indeed obtain a
Kasparov module.
First $V=\hat\D(1+\hat\D^2)^{-1/2}$ is self-adjoint. Also
$f(1-V^2)=f(1+\hat\D^2)^{-1}$
is a compact endomorphism for $f\in\cM(F,\A)$. Since
$V$ clearly anticommutes with the grading operator $\Gamma=\bma 1 &
0 \\ 0 & -1\ema$, we need only show that $[V,f]$ is compact for all
$f\in M(F,A)$. For $f$ a sum of elementary tensors (using smooth
functions), we may write this commutator as
$$[V,f]=[\hat\D,f](1+\hat\D^2)^{-1/2}+\hat\D[(1+\hat\D^2)^{-1/2},f]$$
Now for an elementary tensor $f\otimes a$, we get $[\hat\D,f\otimes a]
=\p f\otimes a +f\otimes[\D,a]$ and so the first term in the above equation
is compact. In the proof  of Proposition 2.4 of \cite{CP1} we have the formula:
\begin{eqnarray*} &&\hat\D[(1+\hat\D^2)^{-1/2},f]\\
&=&\frac{1}{\pi}\int_0^\infty \lambda^{-1/2}
\{\hat\D(1+\hat\D^2+\lambda)^{-1/2}\left\{(1+\hat\D^2+\lambda)^{-1/2}[f,\hat\D]
(1+\hat\D^2+\lambda)^{-1/2}\right\}\hat\D(1+\hat\D^2+\lambda)^{-1/2}\\
&+& \hat\D^2(1+\hat\D^2+\lambda)^{-1}[f,\hat\D](1+\hat\D^2+\lambda)^{-1}\}
d\lambda.\end{eqnarray*}
where the integral converges in operator norm and we have grouped the terms in 
the integrand so that they are clearly compact by the discussion above. It
follows that $[V,f]$ is a compact endomorphism for $f$ a sum of elementary 
tensors. Since these are norm dense in $M(F,A)$ and $V$ is bounded, 
$[V,f]$ is compact for all $f\in M(F,A)$. So we have an even
Kasparov module for $(M(F,A),F)$ with an unbounded representative for
$(\cM(F,\A),F)$.
\end{proof}

{\bf Remark}. It should be noted that in this
context, discreteness of the spectrum of $\D$ does NOT imply that
$(1+\D^2)^{-1/2}$ is a compact endomorphism. We are assuming that we
have a Kasparov module, so that for all $a\in A$ $a(1+\D^2)^{-1/2}$ is
a compact endomorphism, but these two compactness conditions are not
equivalent unless $A$ is unital. Kasparov modules corresponding to
infinite graphs provide examples of this phenomenon, \cite{pr}.

\section{Equality of the index pairings from the Kasparov modules.}
We formulate our main theorem in this Section demonstrating how
even and odd Kasparov modules give equal
index pairings.

We recall that given a partial isometry $v\in A$
with range and source projections in
$F$ (observe this includes unitaries in $A$), we defined
$ v_1=\bma 1-vv^* & v\\ v^* & 1-v^*v\ema.$
This is a self-adjoint unitary in $M_2(\tilde A)$, 
and hence there exists a norm continuous path of self-adjoint unitaries
in $M_2(\tilde A)$ from $v_1$ to the identity.  We choose the path
$$v_1(t)=\frac{1}{2}(e^{2i\tan^{-1}(t)}(v_1-1_2)+(v_1+1_2)),$$
so that $v_1(0)=v_1$ and $v_1(\infty)=1_2$.
Now define a projection $e_v(t)$ over $\tilde{M}(F,A)$ by
$$e_v(t)=v_1(t)\bma 1 & 0\\ 0 & 0\ema v_1(t)^*=\bma
1-\frac{1}{1+t^2}vv^* & \frac{-it}{1+t^2}v\\ \frac{it}{1+t^2}v^*&
\frac{1}{1+t^2}v^*v\ema,$$
where we have used some elementary trigonometry
to simplify the expressions. It is important to observe that this is a finite 
sum of elementary tensors $\sum f_j\otimes a_j$ with
$f_j$ smooth and square integrable or $f_j-f_j(\infty)$ smooth and square
integrable. As such it maps $(\hat{\E}\oplus\hat{\E})^T$ to itself
and leaves $(\E\oplus\E)^T$ invariant.

The difference of classes
$$ [e_v(t)]-\left[\bma 1 & 0\\0&0\ema\right]$$
lies in $K_0(M(F,A))$: see Lemma \ref{Im:Put} and the discussion preceding
it, as well as \cite{Put}.
Let $e=\bma 1 & 0 \\0 & 0 \ema$, a constant function, then the index pairing 
of $[v]\in K_0(M(F,A))$ with $[(\hat{X},\hat\D)]$ is
$$\la [e_v]-[e],[(\hat{X},\hat\D)]\ra
:={\rm Index}(e_v(\hat\D\otimes 1_2)e_v)-{\rm Index}(e(\hat\D\otimes 1_2)e)
\in K_0(F).$$

\begin{rems*} To explain this notation we review {\bf even} index theory. 
On $\bma\E\\\hat{\E}\ema$, 
$\hat\D=\bma  0 & T_-\\T_+ & 0\ema$ 
while the grading operator
$\Gamma=\bma 1 & 0\\0 & -1 \ema.$ That is $\hat\D$ is {\bf odd} while the 
action of $M(F,A)$ is {\bf even}, i.e., diagonal. Then, on 
$\bma\E\otimes\C^2\\\hat{\E}\otimes\C^2\ema$ we have:
$\hat\D\otimes 1_2=\bma \hat\D & 0 \\ 0 & \hat\D \ema$ and
$\Gamma\otimes 1_2=\bma \Gamma & 0 \\ 0 & \Gamma \ema$ while 
$e_v=\bma f & g \\ h & k \ema \in M_2(M(F,A))$ acts as 
$\bma f\otimes 1_2 & g\otimes 1_2 \\ h\otimes 1_2 & k\otimes 1_2 \ema.$ 
Let $([\E\oplus\hat{\E}]\oplus[\E\oplus\hat{\E}])^T
\cong ([\E\oplus\E]\oplus[\hat{\E}\oplus\hat{\E}])^T$ be the obvious 
unitary equivalence. Under this equivalence $\hat\D\otimes1_2$ becomes
$\bma 0 & T_-\otimes1_2 \\ T_+\otimes1_2 & 0 \ema$, while
$e_v=\bma f & g \\ h & k \ema \in M_2(M(F,A))$ acts as 
$\bma e_v & 0 \\ 0 & e_v\ema$.
Also, $\mbox{Index}(e_v(\hat\D\otimes1_2)e_v)$
really means the index of the lower corner operator of 
$\bma e_v & 0 \\ 0 & e_v\ema(\hat\D\otimes1_2)\bma e_v & 0 \\ 0 & e_v\ema=
\bma 0 & e_v(T_-\otimes1_2)e_v \\ e_v(T_+\otimes1_2)e_v & 0 \ema$:
$$e_v\bma T_+ & 0\\ 0 & T_+\ema e_v:\;\;\mbox{as\;\;a\;\;mapping}\;\;
e_v\bma \E\\ \E\ema \to e_v\bma\hat{\E}\\ \hat{\E}\ema.$$
That is we must compute both:
$${\rm ker}(e_v(T_+\otimes 1_2)e_v)\subseteq e_v\bma \E\\ \E\ema\;\;\;{\rm and}
\;\;\;{\rm ker}(e_v(T_-\otimes 1_2)e_v)\subseteq e_v\bma \hat{\E}\\ \hat{\E}\ema
\subseteq \bma \hat{\E}\\ \E\ema.$$
Similarly, $\mbox{Index}(e(\hat\D\otimes 1_2)e)$ 
means the index of the lower
corner operator:
$e\bma T_+ & 0\\ 0 & T_+\ema e$, that is, $T_+$
as a mapping from $\E\to\hat{\E},$
which we will write as $\mbox{Index}(\hat\D)$.
With this reminder,  and the convention that   if $T$ is an operator on the module $Y$, we write
$T_k$ for $T\otimes 1_k$ on the module $Y\otimes \C^k$,        
we now state our key result.
\end{rems*}

\begin{thm}\label{mainresult}
 Let $(X,\D)$ be an ungraded unbounded Kasparov module
for the (pre-) $C^*$-algebras $\A\subset A,F$ with $F\subset A$ a
subalgebra satisfying $\overline{A\cdot F}=A$. Suppose that $\D$
also commutes with the left action of $F\subset A$, and that $\D$
has discrete spectrum. Let $(\hat{X},\hat\D)$ be the unbounded
Kasparov $M(F,A),F$ module of Proposition \ref{thmone}.
Then for any unitary $u\in M_k(A)$ such that 
$P_k$ and $(\Phi_0)_k$ both commute with
$u\D_k u^*$ and $u^*\D_k u$
we have the following equality of
index pairings with values in $K_0(F)$:
\bean \la [u],[(X,\D)]\ra&:=&{\rm Index}(P_ku^*P_k)
=
{\rm Index}(e_u(\hat\D_k\otimes1_2) e_u)-{\rm Index}(\hat\D_k)\nno
&=:&
\la [e_u]-\left[\bma 1 & 0\\ 0 & 0\ema\right],[(\hat{X},\hat\D)]\ra
\in K_0(F).\eean
Moreover, if $v$ is a  partial isometry, $v\in M_k(\A)$, with
$vv^*,v^*v\in M_k(F)$ and such that 
$P_k$ and $ (\Phi_0)_k$ {both  commute  with} 
$v\D_k v^*$ and $ v^*\D_k v$
we have
\bea\la [e_v]-\left[\bma 1 & 0\\ 0 & 0\ema\right],[(\hat{X},\hat{\D})]\ra&=&
-{\rm Index}(PvP:v^*vP(X)\to vv^*P(X))\in K_0(F)\nno
&=&{\rm Index}(Pv^*P:vv^*P(X)\to v^*vP(X))\in K_0(F).\label{pisomindex}\eea
\end{thm}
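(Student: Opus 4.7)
I will establish the partial isometry identity \eqref{pisomindex} by explicitly computing the kernels and cokernels of $e_v(\hat\D_k\otimes 1_2)e_v$ and of $\hat\D_k$, and then deduce the unitary identity as a corollary. When $u$ is unitary it is itself a partial isometry with $uu^*=u^*u=1$, so \eqref{pisomindex} specialises to ${\rm Index}(e_u(\hat\D_k\otimes 1_2)e_u) - {\rm Index}(\hat\D_k) = -{\rm Index}(PuP) = {\rm Index}(Pu^*P)$, the last equality because $(PuP)^* = Pu^*P$. So the bulk of the work is establishing \eqref{pisomindex}.

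\textbf{Step 1: compute ${\rm Index}(\hat\D_k)$.} Using the APS parametrix from Section \ref{secAPS}, the equation $T_+\xi=(\p_t+\D)\xi=0$ with boundary condition $P\xi(0)=0$ admits only the formal solutions $\xi(t)=e^{-t\D}\xi(0)$ with $\xi(0)\in\ker P$, on which $\D$ is strictly negative, forcing exponential blow-up; hence the $L^2$-kernel is trivial. The cokernel is identified with $\ker T_-$ on $\hat\E\otimes\C^k$ under $(1-P)\xi(0)=0$; here $\xi(t)=e^{t\D}\xi(0)$ is $L^2$ or extended $L^2$ precisely when $\xi(0)\in\Phi_0(X)\otimes\C^k$, producing constant extended solutions living at infinity in $\hat\E$. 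Hence ${\rm Index}(\hat\D_k)=-[\Phi_0(X)\otimes\C^k]\in K_0(F)$.

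\textbf{Step 2: compute ${\rm Index}(e_v(\hat\D_k\otimes 1_2)e_v)$.} A direct calculation gives $e_v(0)={\rm diag}(1-vv^*,\,v^*v)$ and $e_v(\infty)={\rm diag}(1,0)$, so on the image of $e_v$ the APS data $P\xi_i(0)=0$ impose $\xi_1(0)\in(1-vv^*)\ker P$ and $\xi_2(0)\in v^*vP(X)$. The commutativity of $P$ and $\Phi_0$ with $v\D v^*$ and $v^*\D v$ ensures that the constrained ODE $e_v(t)(T_+\otimes 1_2)\xi(t)=0$ decouples along the spectral subspaces of $\D$ on the relevant invariant components. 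Solving eigenspace-by-eigenspace, enforcing decay at $\infty$, and matching with the APS conditions at $0$, the genuine $L^2$-kernel is parametrised by $\ker(Pv^*P: vv^*PX\to v^*vPX)$ and the extended-$L^2$ part is a copy of $\Phi_0(X)\otimes\C^k$. A symmetric analysis of $\ker e_v(T_-\otimes 1_2)e_v$ yields $\ker(PvP)$ plus a matching $\Phi_0$-contribution. Taking the difference, the $\Phi_0$-pieces cancel in $K_0(F)$, leaving $[\ker Pv^*P]-[\ker PvP]={\rm Index}(Pv^*P)=-{\rm Index}(PvP)$, which is \eqref{pisomindex}.

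\textbf{Main obstacle.} The hard part is the ODE analysis in the $C^*$-module setting; as the authors emphasise, differential equations valued in a Hilbert $C^*$-module are considerably more delicate than in Hilbert space. One must check, for each candidate solution, whether it lies in $\E$, in $\hat\E$, or only as an extended $L^2$-function, using the function-representation results from Section \ref{secAPS}. The commutativity hypotheses on $v\D v^*$ and $v^*\D v$ with $P$ and $\Phi_0$ are exactly what allow the ODE to decouple along spectral subspaces: without them the eigenspace analysis breaks down and the $\Phi_0$-contributions from the two index terms would no longer match and cancel, spoiling the clean identification with $-{\rm Index}(PvP)$.
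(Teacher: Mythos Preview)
Your overall strategy---compute the kernels of $e_v(T_\pm\otimes 1_2)e_v$ explicitly and match against ${\rm Index}(PvP)$---is the paper's strategy too, and your Step~1 is correct. But Step~2 contains genuine gaps that are not mere omissions of detail.

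\textbf{The ODE does not decouple along the spectral subspaces of $\D$.} After reducing (as the paper does) the equation $e_v(T_+\otimes 1_2)e_v\xi=0$ to a scalar equation for $\xi_2$, one obtains
\[
\left(\frac{1}{\sqrt{1+t^2}}\,\p_t\circ\sqrt{1+t^2}+v^*v\D+\frac{t^2}{1+t^2}\,v^*dv\right)\xi_2=0,
\]
where $v^*dv=v^*[\D,v]=v^*\D v - v^*v\D$. The hypotheses say $P$ and $\Phi_0$ commute with $v^*\D v$, but they do \emph{not} say that $\D$ (or its spectral projections $\Phi_k$) commute with $v^*dv$. So the time-dependent perturbation $V(t)=\frac{t^2}{1+t^2}v^*dv$ genuinely mixes the eigenspaces of $\D$, and your sentence ``the constrained ODE \dots\ decouples along the spectral subspaces of $\D$'' is false in general. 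The paper handles this by constructing Dyson-type propagators $U(t,s),H(t,s),W(t,s),G(t,s)$ (iterated integrals of $e^{-(t-t_1)\D}V(t_1)e^{-(t_1-t_2)\D}\cdots$) on carefully chosen invariant subspaces such as $v^*(P-\Phi_0)v(1-P)(X)$; the commutation hypotheses are used to show these subspaces are preserved, not to diagonalise the equation. This perturbative machinery is the technical heart of the proof and is absent from your sketch. (The paper even remarks at the end of Section~5 that the simple closed-form solution you seem to have in mind is available only in the special case $[\D,v^*dv]=0$.)

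\textbf{The kernel identifications and the $\Phi_0$ bookkeeping are not as you state.} The paper finds
\[
\ker\bigl(e_v(T_+\otimes 1_2)e_v\bigr)\cong v^*(P-\Phi_0)v(1-P)(X),
\]
\[
\ker\bigl(e_v(T_-\otimes 1_2)e_v\bigr)\cong v^*(1-P)vP(X)\ \oplus\ v^*\Phi_0 vP(X)\ \oplus\ (1-vv^*)\Phi_0(X),
\]
neither of which is literally $\ker(Pv^*P)$ or $\ker(PvP)$ ``plus a copy of $\Phi_0(X)$''. There are \emph{three} distinct $\Phi_0$-contributions (from $v^*\Phi_0 v(1-P)$, $v^*\Phi_0 vP$, and $(1-vv^*)\Phi_0$), and their cancellation in $K_0(F)$ against the $[X_0]$ coming from ${\rm Index}(\hat\D)$ uses the partial isometry $w=v^*\Phi_0$ to identify $[v^*\Phi_0 v(X)]=[vv^*\Phi_0(X)]$. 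Only after this algebra does one arrive at $[v^*Pv(1-P)(X)]-[v^*(1-P)vP(X)]$, which one then checks equals $-{\rm Index}(PvP)$. Your claim that ``the $\Phi_0$-pieces cancel'' with a single matching pair is too coarse. (Also, a small slip: for $T_+$ the boundary condition is $P\xi_2(0)=0$, i.e.\ $\xi_2(0)\in v^*v(1-P)(X)$, not $\xi_2(0)\in v^*vP(X)$ as you wrote.)
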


\begin{rems*}(1) In the last statement 
we really are taking a Kasparov product when we
consider
$$K_0(M(F,A))\times KK^0(M(F,A),F)\to K_0(F).$$
 Hence the index
is well-defined, depends only on the class of $[e_v]-[1]=[v]$
and the class of the `APS Kasparov module'. \\
%The complicated homotopy invariance 
%properties of $\mbox{Index}PvP$ compared to
%the straightforward Kasparov product  above should be
%compared to the remark in \cite[p 95]{APS3} on `suspension' and
%`desuspension'.\\
\noindent (2) We note that our hypothesis that $P$ and $\Phi_0$
 commute with 
 $v^*\D v$ is equivalent to $P$ and $\Phi_0$ 
 commuting with
$v^*dv$ since $P,\,\Phi_0$ 
commute with $\D$ and with $v^*v$. Thus $P,\,\Phi_0$ 
commute 
with all functions of $v^*\D v$, and in particular with each spectral
projection $v^*\Phi_k v$. Similarly, 
the first set of commutation relations
imply that $\D$ and all
of $\D$'s spectral projections commute with $v^*Pv$ and 
$v^*\Phi_0v$.\\
%Moreover, it is also clear that  $1\otimes P$
%(which we often abbreviate to $P$) commutes with $\tilde{\D}_v$ and
%$V$ and hence with $\tilde{\D}_v+V.$
\noindent (3) Whether every class $[v]\in K_0(M(F,A))$ possesses 
a representative satisfying the hypotheses of the theorem is unknown to us 
in general. Just as with the issues of regularity, it {\em may} be that one 
can always homotopy $v$ and/or $(X,\D)$ so that the hypotheses are satisfied. 
We leave this issue for future work, noting that for the applications we have 
in mind the hypotheses are satisfied.\\
\noindent (4)
With regards to the regularity of $PvP$ (in the sense of having a pseudoinverse
\cite[Definition 4.3,]{GVF}), we observe that since $P$ commutes
with $v^*Pv,$ the operator $PvP$ is regular as an operator from $v^*vP(X)$
to $vv^*P(X)$,
where the pseudoinverse of $PvP$ is provided by $Pv^*P$. That is,
$(PvP)(Pv^*P)(PvP)=PvP$ and $(Pv^*P)(PvP)(Pv^*P)=Pv^*P.$ Thus our hypotheses 
guarantee the regularity of $PvP$, and the independence of the index of $PvP$ 
on which regular `amplification' we take gives some evidence that the 
hypotheses may be relaxed.
\end{rems*}
The proof of Theorem \ref{mainresult} will occupy the rest of the Section.

\subsection{Preliminaries}

As is usual for an index calculation such as this, we will assume without 
loss of generality (by replacing $\A$ by $M_k(\A)$ if necessary) that 
the partial isometry $v$ lies in $\A$.
To begin the proof it  is helpful to write $e_v$ as an orthogonal sum of
subprojections in $\mathcal{L}(\hat{X}\oplus\hat{X})$ which, of course, 
commute with $e_v$:
\begin{eqnarray}
\label{orthod}e_v=
\bma\frac{t^2}{1+t^2}vv^*& \frac{-it}{1+t^2}v\\ 
\frac{it}{1+t^2}v^*&
\frac{1}{1+t^2}v^*v\ema + \bma 1-vv^* & 0 \\ 0 & 0\ema :=\widehat{e_v}+e^0_v.
\end{eqnarray}

Note that to prove the Theorem
it suffices to demonstrate the equality in Equation
(\ref{pisomindex}), and that is what we shall do.
Using the decomposition of $e_v$ into orthogonal subprojections in 
(\ref{orthod}) an elementary calculation now gives:

\begin{lemma}\label{inmodule}
{\rm (1)} Let $\xi=\bma\xi_1\\\xi_2\ema\in
\bma\E\\\E\ema$. Then $\xi\in e_v\bma\E\\\E\ema$ if and only if
$v^*v\xi_2=\xi_2$ and $vv^*\xi_1=-itv\xi_2.$
In this case by Equation (\ref{orthod}) we get an orthogonal decomposition:
\begin{eqnarray}\label{decomp} \bca\xi_1\\\xi_2\eca=e_v \bca\xi_1\\\xi_2\eca=
\widehat{e_v}\bca\xi_1\\\xi_2\eca + e^0_v\bca\xi_1\\\xi_2\eca =
\bca \eta_1\\\xi_2\eca+\bca
\zeta_1\\0\eca,\end{eqnarray}
where $\eta_1=vv^*\xi_1=-itv\xi_2$ and $\zeta_1=(1-vv^*)\xi_1$; and {\bf both}
$\bma\eta_1\\\xi_2\ema$ and $\bma\zeta_1\\0\ema$ lie in $e_v\bma\E\\\E\ema$.\\
\noindent {\rm (2)} The same statement (mutatis mutandis) holds for 
$\xi=\bma\xi_1\\\xi_2\ema\in\bma\hat{\E}\\\hat{\E}\ema$
\end{lemma}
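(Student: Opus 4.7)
The plan is to exploit the fact that $e_v$ is a projection, so $\xi \in e_v\binom{\E}{\E}$ if and only if $e_v\xi = \xi$. Writing out this equation componentwise from the matrix formula for $e_v$ and clearing the $(1+t^2)$ denominators gives
\begin{align*}
vv^*\xi_1 + itv\xi_2 &= 0,\\
itv^*\xi_1 + v^*v\xi_2 &= (1+t^2)\xi_2.
\end{align*}
The first equation is exactly the claimed condition $vv^*\xi_1 = -itv\xi_2$. For the second, the key move is to apply $v^*$ to the first equation: using the partial isometry identity $v^*vv^* = v^*$, this yields $v^*\xi_1 = -it\,v^*v\xi_2$. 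Substituting into the second equation collapses it to $(1+t^2)v^*v\xi_2 = (1+t^2)\xi_2$, hence $v^*v\xi_2 = \xi_2$. The converse direction is a direct substitution: given the two stated conditions one re-expands $e_v\xi$ and verifies componentwise that one recovers $\xi$.

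For the orthogonal decomposition I would simply verify that each of $\binom{\eta_1}{\xi_2}$ and $\binom{\zeta_1}{0}$ individually satisfies the two conditions just derived. For the first, $\eta_1 = vv^*\xi_1 = -itv\xi_2$, so $vv^*\eta_1 = (vv^*)^2\xi_1 = vv^*\xi_1 = \eta_1 = -itv\xi_2$, while the condition on the second coordinate is unchanged. For $\binom{\zeta_1}{0}$ with $\zeta_1 = (1-vv^*)\xi_1$, the condition $v^*v\cdot 0 = 0$ is trivial, and $vv^*\zeta_1 = vv^*(1-vv^*)\xi_1 = 0 = -itv\cdot 0$. Orthogonality of the two summands amounts to the matrix identity $\widehat{e_v}\,e_v^0 = 0$, which reduces at once to $vv^*(1-vv^*) = 0$ and $v^*(1-vv^*) = v^* - v^*vv^*\cdot$ which vanishes by the same partial isometry identity.

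Part (2) follows verbatim: every manipulation above is purely algebraic in $v$, $v^*$ and scalar functions of $t$, and takes place componentwise in the module structure, so replacing $\E$ by $\hat\E$ changes nothing. There is no real obstacle in this lemma -- the only subtle point is recognising that the identity $v^*vv^* = v^*$ is what reduces the a priori two distinct equations $e_v\xi = \xi$ to the two simpler stated conditions. The more substantive use of this description will come later, when the orthogonal decomposition \eqref{decomp} is applied to separate the index calculation on $e_v\binom{\E}{\E}$ into a piece depending genuinely on $v$ and a piece supported on $\ker vv^*$.
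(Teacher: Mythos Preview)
The proposal is correct and follows exactly the route the paper indicates: the paper states only that the lemma follows by ``an elementary calculation'' from the orthogonal decomposition \eqref{orthod}, and you have supplied precisely that calculation. Your use of the partial isometry identity $v^*vv^*=v^*$ to collapse the two component equations of $e_v\xi=\xi$ to the stated conditions is the intended step.
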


In order to solve the differential equations to find the index in the Theorem we need the commutation relations recorded in the following lemma.

\begin{lemma}\label{lm:subspaces-preserved} 
The operators $v^*\D v$, $v^*v\D$ and $v^*dv$ 
preserve the subspaces of $v^*v(X)$ (intersected with the 
appropriate domains where necessary) given by
$ v^*QvP(X),\ \ v^*Qv(1-P)(X),$
where $Q$ is any of the projections 
$P,\,P-\Phi_0,\,1-P,\,1-P+\Phi_0,\,\Phi_0$.
\end{lemma}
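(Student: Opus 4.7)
The plan is to unpack the commutation relations collected in Remark (2) after Theorem \ref{mainresult} and combine them with the fact that $v$ implements a unitary isomorphism between the submodules $v^*v(X)$ and $vv^*(X)$.

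First I would record two elementary algebraic facts. Since $vv^*\in F$ commutes with $\D$, it commutes with every spectral projection $Q$ of $\D$ appearing in the list. Together with $vv^*v=v$ this gives
\[
(v^*Qv)^2 \;=\; v^*Q(vv^*)Qv \;=\; v^*Q^2(vv^*)v \;=\; v^*Qv,
\]
so each $v^*Qv$ is a genuine projection with range contained in $v^*v(X)$. Moreover, since $v^*v\in F$ commutes with $\D$, the operator $v^*v\D$ equals $\D v^*v$ and, restricted to $v^*v(X)$, is simply $\D|_{v^*v(X)}$.

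Next I would identify $v^*\D v$ intrinsically. The partial isometry $v$ gives an $F$-module isomorphism $v:v^*v(X)\to vv^*(X)$ with inverse $v^*$, and a direct computation using $vv^*\in F$ yields
\[
v\,(v^*\D v)\,v^* \;=\; (vv^*)\D(vv^*) \;=\; \D\,(vv^*),
\]
so $v^*\D v$ on $v^*v(X)$ is unitarily equivalent to $\D|_{vv^*(X)}$. Under this equivalence the spectral projection of $v^*\D v$ on the spectral interval corresponding to $Q$ is precisely $v^*Qv$; in particular $v^*Qv$ commutes with $v^*\D v$ on the natural domain.

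Finally I would combine the commutation relations. By the hypothesis of Theorem \ref{mainresult}, $P$ commutes with $v^*\D v$, and by Remark (2) $P$ commutes with every $v^*\Phi_k v$, hence with every $v^*Qv$ from our list (each $Q$ being a sum of $\Phi_k$'s in the discrete spectral decomposition of $\D$). Thus $v^*\D v$, $P$ and $v^*Qv$ are mutually commuting, so the commuting product $v^*Qv\cdot P$ is a projection commuting with $v^*\D v$ and its range is preserved. The same argument handles $v^*Qv\cdot(1-P)$. For $v^*v\D=\D|_{v^*v(X)}$, Remark (2) gives that $\D$ commutes with $v^*Pv$ and $v^*\Phi_0v$, and so with each $v^*Qv$ in the list; since $\D$ also commutes with $P$, the conclusion follows. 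The third operator $v^*dv$ I read as a typographical variant of $v^*\D v$, already covered.

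The main technical care, rather than obstacle, is the handling of domains: $v^*\D v$ is unbounded, so the invariance statement is really that the intersection of $v^*QvP(X)$ with the relevant domain is carried into $v^*QvP(X)$. This follows from the identity $(v^*\D v)(v^*QvP)\xi=(v^*QvP)(v^*\D v)\xi$ on the initial dense domain and then extends to the closure by the fact that $v^*QvP$ is a bounded projection commuting with the closed operator.
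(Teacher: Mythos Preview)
Your argument for $v^*\D v$ and $v^*v\D$ is essentially the paper's own proof, carried out with a bit more detail on why $v^*Qv$ is a spectral projection of $v^*\D v$ and hence commutes with it. The one genuine error is your reading of $v^*dv$. In this paper $da:=[\D,a]$ (the notation is introduced in Section~5.2), so
\[
v^*dv \;=\; v^*[\D,v] \;=\; v^*\D v - v^*v\D,
\]
which is a \emph{different} operator from $v^*\D v$, not a typographical variant. This is not a fatal gap: once you know what $v^*dv$ means, the conclusion is immediate, since both $v^*\D v$ and $v^*v\D$ preserve each of the subspaces and therefore so does their difference $v^*dv$. That is exactly how the paper completes the proof. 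But you should correct the misidentification rather than dismiss it as a typo.
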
 

\begin{proof} In the remarks after the statement of 
Theorem \ref{mainresult}, we noted that all spectral projections of $v^*v\D$ commute 
with the projections $v^*Qv$ with $Q$. As $v^*v\D$ 
also commutes with $P$ and $1-P$, $v^*v\D$ preserves
these subspaces. 
Likewise, $v^*\D v$ commutes with $v^*Q'v$ for {\em any} spectral 
projection $Q'$ of $\D$, and by the hypotheses on $v$, $v^*\D v$ 
commutes with  $P$ and so $1-P$. Thus $v^*\D v$ preserves all these 
subspaces.
The result for $v^*dv=v^*\D v-v^*v\D$ follows immediately.
\end{proof}

\subsection{Simplifying the equations}
The main consequence of Lemma \ref{inmodule} is  that we can consider two 
orthogonal subspaces of solutions separately and this greatly reduces the 
complexity of our task. In this subsection we will cover the $T_+$ case: 
${\rm ker}(e_v(T_+\otimes 1_2)e_v)$.

We observe that 
$(\p_t +\D)\otimes 1_2$ commutes with the projection
$\bma 1-vv^* & 0 \\ 0 & 0 \ema$ (which is $\leq e_v$). 
Thus with $Q_+$ the parametrix for $T_+=\p_t+\D$ constructed earlier we have
\bean &&\bma (1-vv^*) & 0\\ 0 & 0\ema(Q_+\otimes 1_2)\bma (1-vv^*) & 0\\
 0 & 0\ema e_v((\p_t+\D)\otimes 1_2)
e_v\bma 1-vv^* & 0\\ 0 & 0\ema \\
&&=\bma (1-vv^*) & 0\\ 0 & 0\ema(Q_+\otimes 1_2)((\p_t+\D)\otimes 1_2)
\bma 1-vv^* & 0\\ 0 & 0\ema  \\
&&=\bma (1-vv^*) & 0\\ 0 & 0\ema(Id\otimes 1_2)
\bma 1-vv^* & 0\\ 0 & 0\ema =\bma 1-vv^* & 0\\ 0 & 0\ema
\eean
Thus the kernel is $\{0\}$ on this subspace, and so we need only
calculate the kernel on the range of $\widehat{e_v}.$
%We see that for the vectors in the range of $\widehat{e_v}$ where $\xi_1=-itv\xi_2$, 
%$v^*v\xi_2=\xi_2$, we need only determine $\xi_2$. 
%First we determine $\widehat{e_v}[(\p_t+\D)\otimes 1_2]\widehat{e_v}.$ 
Using the notation
$da:=[\D,a]$ and recalling that $vv^*$ and $v^*v$ commute with $\D$,
so that $v^*vdv^*=dv^*$ and $vv^*dv=dv$ we now obtain:
\begin{eqnarray*}&&\widehat{e_v}[(\p_t+\D)\otimes 1_2]\widehat{e_v}\\
&=&\bma \frac{t}{(1+t^2)^2}vv^* + \frac{t^2}{1+t^2}vv^*(\p_t +\D)
+\frac{t^2}{(1+t^2)^2}vdv^* & \frac{it^2}{(1+t^2)^2} v +\frac{-it}{1+t^2}v
(\p_t+\D) + \frac{-it^3}{(1+t^2)^2}dv \\ \frac{i}{(1+t^2)^2}v^* + 
\frac{it}{1+t^2}v^*(\p_t+\D) + \frac{it}{(1+t^2)^2}dv^* &
\frac{-t}{(1+t^2)^2}v^*v + \frac{1}{1+t^2}v^*v(\p_t+\D) +\frac{t^2}{(1+t^2)^2}
v^*dv \ema \\
&=&\frac{1}{1+t^2}\bma t^2vv^*(\p_t+\D) & -itv(\p_t+\D) \\ itv^*(\p_t+\D) &
v^*v(\p_t+\D)\ema + \frac{1}{(1+t^2)^2} \bma tvv^* + t^2 vdv^* &
it^2v -it^3dv \\ iv^* + itdv^* & -tv^*v + t^2v^*dv\ema.
\end{eqnarray*}
Using this formula, %we now perform the computation 
%$\widehat{e_v}[(\p_t+\D)\otimes 1_2]\widehat{e_v}$ times a vector 
%$(\xi_1,\xi_2)^T$ in the range of $\widehat{e_v}$ and obtain the vector:
we obtain
$$\widehat{e_v}\bma (\p_t+\D) & 0 \\ 0 & (\p_t+\D)\ema\widehat{e_v}\bma \xi_1 \\
\xi_2\ema =\bma -itv(\p_t+\D)\xi_2 -\frac{it^2}{1+t^2}v\xi_2 -\frac{it^3}{1+t^2}
dv\xi_2 \\ (\p_t+\D)\xi_2 +\frac{t}{1+t^2}\xi_2 +\frac{t^2}{1+t^2}v^*dv\xi_2
\ema.$$
Since this vector is also in the range of $\widehat{e_v}$ we check that 
the first coordinate is $-itv$ times the second coordinate as required by 
Lemma \ref{inmodule}.
%That is, any vector $(\rho_1,\rho_2)^T$ in the range of
%$\widehat{e_v}[(\p_t+\D)\otimes 1_2]\widehat{e_v}$ satisfies:
We may rewrite the second coordinate in the preceding equation:
$$\rho_2(t)=(\p_t+\D)\xi_2 +\frac{t}{1+t^2}\xi_2  +v^*dv\xi_2 
-\frac{v^*dv}{1+t^2}\xi_2$$
using $\xi_2=v^*v(\xi_2),$ and $1-1/(1+t^2)=t^2/(1+t^2)$ as:
%$(\D v^*v+ v^*dv)\xi_2=v^*\D v\xi_2$ and get:
$$\rho_2(t)=%\p_t\xi_2 +\frac{t}{1+t^2}\xi_2 +v^*v\D+ v^*d v\xi_2-
%\frac{v^*dv}{1+t^2}\xi_2=
\left(\frac{1}{\sqrt{1+t^2}}\p_t\circ\sqrt{1+t^2}
+v^*v\D  + \frac{t^2v^*dv}{1+t^2}\right)\xi_2 =:(\tilde{\D}_v + V)\xi_2$$
where $\tilde{\D}_v=\left( 
\frac{1}{\sqrt{1+t^2}}\p_t\circ\sqrt{1+t^2}+v^*v\D \right)$
and $V=\frac{t^2}{1+t^2}\otimes (v^*dv):=V_0\otimes (v^*dv).$
So in order to compute the kernel of 
$\widehat{e_v}[(\p_t+\D)\otimes 1_2]\widehat{e_v}$ acting on the range of
$\widehat{e_v}$, it suffices to compute the kernel of $\tilde{\D}_v + V$ acting
on vectors $\xi_2\in {\rm dom}(\tilde{\D})$ satisfying $v^*v(\xi_2)=\xi_2$
and $t\xi_2\in L^2(\R_+)\otimes X$. {\bf In the $T_+$ case only,}
such vectors are precisely those $\xi_2$ in ${\rm dom}(\tilde{\D})$ which lie in 
$L^2(\R_+,(1+t^2)dt)\otimes v^*v(X)$. We make the {important observation}
that $\tilde{\D}_v$ is naturally a densely defined closed operator on 
$L^2(\R_+,(1+t^2)dt)\otimes v^*v(X)$ completely analogous to the 
operator $T_+=\p_t +\D$ of Section 
\ref{secAPS} which acts on $L^2(\R_+)\otimes X.$
%To compute the kernel of $\tilde{\D}_v + V$ on this space we first 
%construct a parametrix for $\tilde{\D}_v$ (similar to the construction in
%Section \ref{secAPS}) and then a parametrix for $\tilde{\D}_v + V.$\\

Now we consider boundary values.
For the equation $e_v((\p_t+\D)\otimes 1_2) e_v\xi=0$ we want to impose 
the boundary
condition
$e_v(0)(P\otimes 1_2)e_v(0)\xi(0)=0$ where $P$ is the non-negative spectral 
projection for $\D$. This
projection is
$$e_v(0)\bma P & 0\\0 & P\ema e_v(0)=\bma (1-vv^*)P & 0\\0 &v^*vP\ema.$$

 Observe that our boundary projection is also the non-negative
spectral projection of $e_v(0)(\D\otimes 1_2) e_v(0)$.
As noted above, the only solution which lies in the range of 
$e_v^0 (P\otimes 1_2)e_v^0=\bma (1-vv^*)P & 0\\0 & 0\ema$ is the zero solution, 
for which this condition is automatically satisfied. Hence, we need not 
concern ourselves any further with this subcase.

\subsection{Solutions, integral kernels and parametrices}

In the following we make some notational simplifications. {\bf We replace 
$v^*v\D$ by $\D$}, and similarly for other operators, since everything 
commutes with $v^*v$ and we will always be working on the subspace 
$v^*v(X)$.  In the notation of the previous subsection we aim to find
the solutions of $(\tilde{\D}_v+V)\rho=0$ on
$L^2(\R_+,(1+t^2)dt)\otimes v^*v(X)$. 

We will break our space up 
into orthogonal pieces preserved by $\tilde{\D}_v+V$. 
We first split our space as the image of $1\otimes P$ and $1\otimes(1-P)$.
On the image of $1\otimes P$ we define a two parameter family of 
bounded operators which will be the integral kernel of a local left inverse for 
$\tilde{\D}_v+V$ on this space. The reason for our notation 
$\tilde{\D}_v+V$ is that we regard $V$ as a (time dependent) 
perturbation, and we will define our integral kernels using a variant 
of the Dyson expansion for time dependent Hamiltonians, \cite[X.12]{RS}.

So for $t\geq s\geq 0$ define an operator on $Pv^*v(X)$ by
$$ U(t,s)=e^{-(t-s)P\D}+\sum_{n=1}^\infty (-1)^n
\int_s^t\int_s^{t_1}\cdots\int_s^{t_{n-1}} 
e^{-(t-t_1)P\D}V(t_1)e^{-(t_1-t_2)P\D}\cdots V(t_n)e^{-(t_n-s)P\D}{\bf dt},$$
where we write:  $dt_n\cdots dt_2dt_1 ={\bf dt},$ and where $P\D$ really means
$\D$ restricted to $Pv^*v(X).$

\begin{lemma}\label{lm:First-kernel} For all $t\geq s\geq 0$ the integrals and the 
infinite sum defining $U(t,s)$ converge absolutely in the operator norm 
on the space $Pv^*v(X).$
For all $t\geq s\geq 0$ we have 
$$\Vert U(t,s)\Vert\leq \Vert e^{-(t-s)P\D}\Vert\, e^{(t-s)\Vert v^*dv\Vert}.$$
Moreover $U(t,s)$ satisfies the differential equations
$$ \frac{d}{dt}U(t,s)=-(\D+V(t))U(t,s)\;\;\;\;{\rm and}\;\;\;\; 
\frac{d}{ds}U(t,s)=U(t,s)(\D+V(s)).$$
\end{lemma}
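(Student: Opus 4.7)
The plan is to treat $U(t,s)$ as a noncommutative Dyson series and proceed in four steps: bound the $n$-th iterated integral, sum to obtain the global norm bound, establish recursive identities, and differentiate termwise.

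For the pointwise bound, write $U_n(t,s)$ for the $n$-th summand of the series. The crucial observation is that after restriction to $Pv^*v(X)$ the operator $\D$ is non-negative, since $P$ is its non-negative spectral projection and $v^*v\in F$ commutes with $\D$ (hence with $P$). Via the continuous functional calculus for the regular self-adjoint operator $P\D$ one then has $\|e^{-uP\D}\|\le 1$ for $u\ge 0$, and the semigroup property gives the telescoping identity $\|e^{-(t-t_1)P\D}\|\cdots\|e^{-(t_n-s)P\D}\|=\|e^{-(t-s)P\D}\|$. Combined with $\|V(t_i)\|\le\|v^*dv\|$ and the simplex volume $(t-s)^n/n!$, this yields $\|U_n(t,s)\|\le\|e^{-(t-s)P\D}\|\,\|v^*dv\|^n(t-s)^n/n!$, which sums to the stated bound and establishes absolute operator-norm convergence of the defining series.

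Next I would identify two Fubini-type recursions: pulling $t_n$ out innermost identifies the remaining iterated integral as $U_{n-1}(t_1,s)$, giving
\begin{equation*}
U_n(t,s)=\int_s^t e^{-(t-t_1)P\D}\,V(t_1)\,U_{n-1}(t_1,s)\,dt_1,
\end{equation*}
and reversing the order of integration so that $t_n$ is outermost gives the dual recursion $U_n(t,s)=\int_s^t U_{n-1}(t,t_n)\,V(t_n)\,e^{-(t_n-s)P\D}\,dt_n$. Differentiating the first in $t$ by Leibniz produces an endpoint contribution $V(t)U_{n-1}(t,s)$ (since $e^0=1$) together with $-\D U_n(t,s)$ from the outer exponential; summing with alternating signs and reindexing telescopes to $\tfrac{d}{dt}U(t,s)=-(\D+V(t))U(t,s)$. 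The $s$-derivative is obtained identically from the second recursion.

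The hard part will be justifying termwise differentiation in the presence of the unbounded $\D$. The smoothing estimate $\|P\D\,e^{-uP\D}\|\le(eu)^{-1}$ from the functional calculus places each $U_n(t,s)$ into $\mathrm{dom}\,\D$ for $t>s$, but a naive operator-norm bound on $\D U_n$ produces a non-integrable $1/(t-t_1)$ singularity at the upper endpoint. I would handle this by rewriting $\D e^{-uP\D}=-\tfrac{d}{du}e^{-uP\D}$ and integrating by parts in the iterated integrals, transferring the derivative onto factors that are uniformly bounded by the Dyson estimate and eliminating the singularity; an inductive version of this argument controls $\partial_t U_n$ in terms of $\partial_t U_{n-1}$ and yields uniform-on-compact estimates sufficient to interchange sum, integral and derivative as adjointable operators on the Hilbert $C^*$-module $Pv^*v(X)$, as required by Lance's functional calculus for regular operators.
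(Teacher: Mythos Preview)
Your proposal is correct and follows essentially the same Dyson-series strategy as the paper. The norm bound is obtained identically: the crude estimate $\Vert V(t_k)\Vert\le\Vert v^*dv\Vert$, multiplicativity of $u\mapsto\Vert e^{-uP\D}\Vert$ (the paper writes this as $\Vert e^{-(t_k-t_{k+1})P\D}\Vert=\Vert e^{-P\D}\Vert^{t_k-t_{k+1}}$, which is your telescoping), and the simplex volume $(t-s)^n/n!$.

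For the differential equations, both you and the paper flag the same subtlety---the unbounded $\D$ makes naive termwise differentiation problematic---but resolve it slightly differently. The paper first proves directly, via a functional-calculus lemma (for $f\in C^{(2)}$ with $x^2|f''(x)|$ bounded), that $\frac{d}{dt}e^{-(t-s)\D}=-\D e^{-(t-s)\D}$ holds with \emph{norm} convergence of the difference quotient for $t-s>0$; it then handles the integral terms by an ``add and subtract'' product-rule trick rather than your integration by parts. The two workarounds are close cousins: yours is more systematic and makes the inductive structure explicit, while the paper's $C^{(2)}$ criterion gives the key derivative in one stroke and avoids your detour through the $(eu)^{-1}$ smoothing estimate and its non-integrable endpoint singularity. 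A minor point: in your first recursion the $(-1)^n$ from the definition of $U_n$ should leave a sign, so $U_n(t,s)=-\int_s^t e^{-(t-t_1)P\D}V(t_1)U_{n-1}(t_1,s)\,dt_1$; this does not affect the argument.
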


\begin{proof}
To see the convergence and the norm inequality, we use the crude 
estimate $\Vert V(t)\Vert\leq \Vert v^*dv\Vert$
together with the equalities: 
$$\Vert e^{-(t_k-t_{k+1})P\D}\Vert=\Vert e^{-P\D}\Vert^{(t_k-t_{k+1})}\;\;\;
\mbox{and}\;\;\;\int_s^t\int_s^{t_1}\cdots\int_s^{t_{n-1}}{\bf 1}\;dt_n\cdots dt_1
=(t-s)^n/n!,$$
to obtain the inequality:
$$\Vert U(t,s)\Vert\leq \Vert e^{-(t-s)P\D}\Vert\sum_{n-0}^{\infty}
\frac{(t-s)^n\Vert v^*dv\Vert^n}{n!}= 
\Vert e^{-(t-s)P\D}\Vert\, e^{(t-s)\Vert v^*dv\Vert}.$$
Differentiating formally 
yields the two differential equations but to see that the difference 
quotients converge in operator norm to the formal derivative takes a
little effort. For example, using the mean value theorem and the 
functional calculus for unbounded self-adjoint operators, one shows that
for any $f\in C^{(2)}(\R_+)$  which satisfies 
$x^2|f^{\prime\prime}(x)|\leq C$ for all $x\in\R_+$ we have:
$\frac{d}{dt}(f((at+b)\D))=a\D f^{\prime}((at+b)\D)$ when $(at+b)>0$
with norm convergence 
of the 
difference quotient. Applying this to $f(x)=e^{-x}$ for $(t-s)>0$ we get 
$\frac{d}{dt}e^{-(t-s)\D}=-\D e^{-(t-s)\D}$, and 
$\frac{d}{ds}e^{-(t-s)\D}=\D e^{-(t-s)\D}.$

As for differentiating the integral terms, formally one uses a product rule
which technically is invalid as one term is unbounded; however, by using the 
product rule trick of adding in a term and subtracting it out, one shows the
formal calculation works. Since the original series and the series for
the derivatives converge uniformly and absolutely, we are done. 
\end{proof}

Using these results we now construct a (local) left inverse for 
$(\tilde{\D}_v+V)(1\otimes P)$.
We define for any
$t\geq 0$
and continuous function
$\rho\in (L^2(\R_+,(1+t^2)dt)\otimes Pv^*v(X))$, $$(\tilde{Q}\rho)(t)
:=\frac{1}{\sqrt{1+t^2}}\int_0^tU(t,s)\sqrt{1+s^2}\rho(s)ds.$$
Observe that $(\tilde{Q}\rho)(0)=0$, and 
is differentiable. 
First we need an elementary operator-theoretic lemma.

\begin{lemma}\label{lm:operator}
Let $T$ be a closed densely defined operator on a Banach space $B$ and let
$\mathcal{S}\subseteq {\rm dom}(T)$ be a dense subspace of ${\rm dom}(T)$
in the domain norm. Let $A:{\rm dom}(T)\to B$ be a bounded operator
in the ${\rm dom}(T)$ norm, and let $Q$ be a densely defined closable linear 
operator whose
domain contains $T(\mathcal{S})$ and such that 
$QT=1_{\mathcal{S}} + A_{|_{\mathcal{S}}}.$ Then,
{\rm range}$(T)\subseteq {\rm dom}(\overline{Q})$
and
$\overline{Q}T=1_{{\rm dom}(T)} + A.$
\end{lemma}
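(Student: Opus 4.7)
The plan is to prove this by a direct closure argument: given $x\in\mathrm{dom}(T)$, I will approximate $x$ by elements of $\mathcal{S}$ in the graph norm of $T$, apply the identity $QT=1_{\mathcal{S}}+A|_{\mathcal{S}}$ on the approximants, and then read off the conclusion from the definition of $\overline{Q}$.

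More concretely, fix $x\in\mathrm{dom}(T)$. Since $\mathcal{S}$ is dense in $\mathrm{dom}(T)$ in the graph norm, choose $\{x_n\}\subset\mathcal{S}$ with $x_n\to x$ and $Tx_n\to Tx$ in $B$. For each $n$ we have $x_n\in\mathrm{dom}(T)$ and $Tx_n\in T(\mathcal{S})\subseteq\mathrm{dom}(Q)$, and by hypothesis
\[
Q(Tx_n)=x_n+A(x_n).
\]
Because $A$ is bounded in the graph norm and $x_n\to x$ in that norm, $A(x_n)\to A(x)$ in $B$; combined with $x_n\to x$ this gives $Q(Tx_n)\to x+A(x)$ in $B$. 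Thus we have a sequence $\{Tx_n\}\subset\mathrm{dom}(Q)$ with $Tx_n\to Tx$ and $Q(Tx_n)$ convergent, which is precisely the criterion for $Tx\in\mathrm{dom}(\overline{Q})$ and $\overline{Q}(Tx)=x+A(x)$.

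Taking this over all $x\in\mathrm{dom}(T)$ yields both $\mathrm{range}(T)\subseteq\mathrm{dom}(\overline{Q})$ and the operator identity $\overline{Q}T=1_{\mathrm{dom}(T)}+A$ on $\mathrm{dom}(T)$, completing the proof.

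I do not anticipate a serious obstacle: the statement is essentially a soft closure/continuity lemma, and the only point requiring any care is making sure the approximation is in the graph norm (so that $A(x_n)\to A(x)$ is legitimate) and that the resulting limit data $(Tx_n, Q(Tx_n))$ really do fit the definition of the closure $\overline{Q}$. Both are immediate once the hypotheses are unpacked.
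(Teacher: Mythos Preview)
Your proof is correct and follows essentially the same approach as the paper's own proof: approximate $x\in\mathrm{dom}(T)$ by a sequence in $\mathcal{S}$ in the graph norm, apply the identity $QT=1+A$ on the approximants, and pass to the limit using the definition of $\overline{Q}$. If anything, your write-up is slightly more explicit about why $A(x_n)\to A(x)$ (continuity of $A$ in the graph norm), which the paper leaves implicit.
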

\begin{proof}
Let $Tx\in{\rm range}(T),$ so there exists a sequence $\{x_n\}$ in
$\mathcal S$ with $x_n\to x$ and $Tx_n\to Tx.$ But then, the fact that
$\lim_n Tx_n=Tx$ and $\lim_{n}Q(Tx_n)=\lim_{n}(x_n+A(x_n))=x+A(x)$ implies that
$Tx\in{\rm dom}(\overline{Q})$ and $\overline{Q}(Tx)=x+A(x).$
\end{proof}

\begin{lemma}\label{lm:no-solns-1} 
The equation $(\tilde{\D}_v+V)\rho=0$ has no nonzero solutions in 
$$(L^2(\R_+,(1+t^2)dt)\otimes Pv^*v(X)).$$
\end{lemma}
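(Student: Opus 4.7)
The strategy is to show that the operator $\tilde{Q}$ acts as a left inverse of $\tilde{\D}_v + V$ on the subspace of functions satisfying the APS boundary condition, from which the lemma follows immediately.

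First I would change variables by setting $\tau(t):=\sqrt{1+t^2}\,\rho(t)$. This is an isometric bijection from $L^2(\R_+,(1+t^2)dt)\otimes Pv^*v(X)$ onto $L^2(\R_+)\otimes Pv^*v(X)$, and a direct computation using the definition of $\tilde{\D}_v$ gives
$$
(\tilde{\D}_v+V)\rho\,(t)\;=\;\frac{1}{\sqrt{1+t^2}}\Bigl(\partial_t\tau(t)+(P\D+V(t))\tau(t)\Bigr),
$$
so the equation $(\tilde{\D}_v+V)\rho=0$ is equivalent to the first-order ODE $\partial_t\tau=-(P\D+V(t))\tau$ on $Pv^*v(X)$. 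Since $\rho$ is assumed to lie in the domain of the operator $e_v(T_+\otimes 1_2)e_v$, the APS boundary condition $e_v(0)(P\otimes 1_2)e_v(0)\xi(0)=0$, restricted to the second coordinate on the image of $1\otimes P$, forces $\rho(0)=0$, so $\tau(0)=0$.

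Next I would derive a left-inverse identity for $\tilde{Q}$. Using the two differential equations of Lemma \ref{lm:First-kernel}, namely $\frac{d}{dt}U(t,s)=-(P\D+V(t))U(t,s)$ and $\frac{d}{ds}U(t,s)=U(t,s)(P\D+V(s))$, and integrating by parts,
\begin{align*}
\int_0^t U(t,s)\bigl(\partial_s\tau(s)+(P\D+V(s))\tau(s)\bigr)\,ds
&=\bigl[U(t,s)\tau(s)\bigr]_{s=0}^{s=t}\\
&=\tau(t)-U(t,0)\tau(0).
\end{align*}
Dividing by $\sqrt{1+t^2}$ yields
$$
(\tilde{Q}(\tilde{\D}_v+V)\rho)(t)\;=\;\rho(t)-\frac{U(t,0)\tau(0)}{\sqrt{1+t^2}}.
$$
Applying this to a solution $\rho$ of $(\tilde{\D}_v+V)\rho=0$ with $\tau(0)=0$ gives $\rho(t)=0$, proving the lemma.

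The main obstacle is not the algebra of the integration by parts but its rigorous justification in the Hilbert $C^*$-module setting: one must know that an arbitrary $\rho$ in the domain of the closure of $\tilde{\D}_v+V$ is represented by a genuine continuous $X$-valued function for which $\rho(0)$ makes sense, and that the integration by parts and the differentiation of $U(t,s)\tau(s)$ with respect to $s$ are valid on such a representative. This is handled by an argument parallel to Section \ref{dom}: the operator $\tilde{\D}_v$ is structurally the same as $T_+$ (up to the bounded multiplication operator $\tfrac{t}{1+t^2}$ and conjugation by $\sqrt{1+t^2}$), so the function-representation results of Lemma \ref{got-functions} and Corollary \ref{got-smooth-functions} apply verbatim to $\rho$, giving the needed pointwise meaning of $\rho(0)$ and the validity of the integration by parts above. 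Absolute operator-norm convergence of the defining series for $U(t,s)$ (Lemma \ref{lm:First-kernel}) then allows one to interchange differentiation and integration in the remaining steps.
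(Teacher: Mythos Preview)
Your approach is essentially the paper's: both compute the left-inverse identity
$(\tilde{Q}(\tilde{\D}_v+V)\rho)(t)=\rho(t)-(1+t^2)^{-1/2}U(t,0)\rho(0)$
via the differential equations for $U(t,s)$, and then use the APS boundary condition on the range of $1\otimes P$ to force $\rho(0)=0$.

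The one point where the paper is more careful is the passage from smooth $\rho$ to arbitrary $\rho$ in the domain of the closure. You propose to justify this by invoking the function-representation results of Section~\ref{dom}; that gives $\rho(0)$ a meaning, but it does not by itself show that $\tilde{Q}$ is well enough behaved on $\mathrm{range}(\tilde{\D}_v+V)$ for the identity to extend. The difficulty is that the norm bound $\Vert U(t,s)\Vert\leq e^{(t-s)\Vert v^*dv\Vert}$ grows in $t$, so $\tilde{Q}$ is not obviously globally bounded or even closable on $L^2(\R_+,(1+t^2)dt)\otimes Pv^*v(X)$. The paper circumvents this by localizing: it restricts to $[0,M]$, exhibits an explicit formal adjoint $\tilde{Q}_M^{\#}$ on continuous functions to show that $\tilde{Q}_M$ is closable there, and then applies the abstract extension Lemma~\ref{lm:operator} to pass from smooth $\rho$ to the full domain. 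Injectivity on each $[0,M]$ then yields global injectivity. Your argument would be complete once you insert this localization-plus-closability step in place of the appeal to Section~\ref{dom}.
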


\begin{proof}
%First we compute
%\begin{align*} \frac{1}{\sqrt{1+t^2}}\p_t\sqrt{1+t^2}(\tilde{Q}\rho)(t) 
%&= \frac{1}{\sqrt{1+t^2}}\p_t\left(\int_0^t U(t,s)\sqrt{1+s^2}\rho(s)ds\right)\nno
%&=\rho(t)-(\D+V(t))(\tilde{Q}\rho)(t),
%\end{align*}
%where we have used the formula for the derivative of $U(t,s)$ in 
%Lemma~\ref{lm:First-kernel}. Hence $\tilde{Q}$ 
%provides a right inverse for 
%$(\tilde{\D}_v+V)P$.
Fix $M>0$ and let $E_{M}$ be the
orthogonal projection of $L^2(\R_+,(1+t^2)dt)$ onto the subspace
$L^2([0,M],(1+t^2)dt).$ Then $E_{M}\otimes 1$ is the orthogonal
projection of
 $(L^2(\R_+,(1+t^2)dt)\otimes Pv^*v(X))$ onto
the subspace $(L^2([0,M],(1+t^2)dt)\otimes Pv^*v(X)).$
Now, we see that $\tilde{Q}$ defines a linear operator
on the dense subspace of 
$(L^2([0,M],(1+t^2)dt)\otimes Pv^*v(X))$ consisting of
continuous functions, 
call it $\tilde{Q}_M.$
This operator has a densely defined adjoint defined on the same
subspace, $\tilde{Q}_M^\#,$ given by the formula:
$$(\tilde{Q}_M^{\#}\rho)(t)
:=\frac{1}{\sqrt{1+t^2}}\int_t^M U(s,t)^*\sqrt{1+s^2}\rho(s)ds.$$
Thus, $\tilde{Q}_M$ is not only densely defined, but also closable on
$(L^2([0,M],(1+t^2)dt)\otimes Pv^*v(X)).$

The smooth functions $\rho$ in the domain of 
$(\tilde{\D}_v+V)(1\otimes P)$ form a 
domain-dense subspace and
$$(E_M\otimes 1)(\tilde{\D}_v+V)(\rho)=(\tilde{\D}_v+V)(E_M\otimes 1)(\rho)
\in{\rm dom}(\tilde{Q}_M).$$
Let $\rho_M=(E_M\otimes 1)(\rho),$ fix $t\in [0,M]$ and calculate:
\begin{align*} 
(\tilde{Q}_M(\tilde{\D}_v+V)\rho_M)(t)&=(\tilde{Q}_M(\tilde{\D}_v+V)\rho)(t)= 
(\tilde{Q}(\tilde{\D}_v+V)\rho)(t)\nno
&=\frac{1}{\sqrt{1+t^2}}\int_0^tU(t,s)\left(\p_s(\sqrt{1+s^2}\rho(s))+
\sqrt{1+s^2}(\D+V(s))\rho(s)\right)ds\nno
&=\frac{1}{\sqrt{1+t^2}}\int_0^t\p_s(U(t,s)\sqrt{1+s^2}\rho(s))ds 
-\frac{1}{\sqrt{1+t^2}}\int_0^t(\p_s U(t,s)) \sqrt{1+s^2}\rho(s)ds\nno
&\qquad\qquad\qquad\qquad\qquad
+\frac{1}{\sqrt{1+t^2}}\int_0^tU(t,s)\sqrt{1+s^2}(\D+V(s))\rho(s)ds\nno
&=\rho(t)-\frac{1}{\sqrt{1+t^2}}U(t,0)\rho(0) =\rho(t)=\rho_M(t),
\end{align*}
As $\rho(0)=P(\rho(0))=0$ the previous lemma implies that
$(\tilde{\D}_v+V)(E_M\otimes P)$ is injective and 
$(\tilde{\D}_v+V)\rho=0$ has no nonzero local solutions on $[0,M]$ 
for any $M>0.$ Hence, $(\tilde{\D}_v+V)\rho=0$ has no nonzero global solutions
in $(L^2(\R_+,(1+t^2)dt)\otimes Pv^*v(X))$.
\end{proof}

Next we split the range of $1\otimes(1-P)$ into two pieces, namely
$$ 1\otimes(1-P)= 1\otimes v^*(1-P+\Phi_0)v(1-P)\,\oplus\,
1\otimes v^*(P-\Phi_0)v(1-P).$$

\begin{lemma}\label{lm:no-solns-2} 
The equation $(\tilde{\D}_v+V)\rho=0$ has no nonzero solutions in 
the subspace
$$L^2(\R_+,(1+t^2)dt)\otimes v^*(1-P+\Phi_0)v(1-P)(X).$$
\end{lemma}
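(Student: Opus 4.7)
The plan is a direct energy estimate. With the substitution $\tilde\rho(t):=\sqrt{1+t^2}\rho(t)$ (already implicit in the derivation of $\tilde{\D}_v$), the equation $(\tilde{\D}_v+V)\rho=0$ becomes $\p_t\tilde\rho=-(v^*v\D+V(t))\tilde\rho$, and on the subspace $v^*v(X)$ the operator simplifies to $v^*v\D+V(t)=\frac{1}{1+t^2}v^*v\D+\frac{t^2}{1+t^2}v^*\D v$, a self-adjoint convex combination. Since $\rho\mapsto\tilde\rho$ is an isometry from $L^2(\R_+,(1+t^2)dt)\otimes v^*(1-P+\Phi_0)v(1-P)(X)$ onto $L^2(\R_+)\otimes v^*(1-P+\Phi_0)v(1-P)(X)$, it suffices to show $\tilde\rho=0$ in the latter.

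The first step is to show $v^*v\D+V(t)\leq 0$ on the subspace $v^*(1-P+\Phi_0)v(1-P)(X)$. For $v^*v\D$ this is immediate: $(1-P)$ is the strictly negative spectral projection of $\D$, and $v^*v$ commutes with $\D$. For $v^*\D v$, given $\xi=v^*(1-P+\Phi_0)v(1-P)\eta$ in the subspace, $vv^*\in F$ commutes with both $P$ and $\Phi_0$ by the Standing Assumption, and $vv^*v=v$, so
\[
v\xi=vv^*(1-P+\Phi_0)v(1-P)\eta=(1-P+\Phi_0)vv^*v(1-P)\eta=(1-P+\Phi_0)v(1-P)\eta.
\]
Hence $v\xi$ lies in the non-positive spectral subspace of $\D$, giving $\la\xi,v^*\D v\xi\ra_X=\la v\xi,\D v\xi\ra_X\leq 0$ in the positive cone of $F$; self-adjointness of $v^*\D v$ on this subspace then yields $v^*\D v\leq 0$ there.

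The second step is an energy monotonicity estimate. For a smooth compactly supported $\rho$ in the initial domain of $\tilde{\D}_v$, integration by parts (as in Lemma \ref{lm:got-zero}) combined with the equation gives, for $0\leq a\leq b$,
\[
\la\tilde\rho(b),\tilde\rho(b)\ra_X-\la\tilde\rho(a),\tilde\rho(a)\ra_X=-2\int_a^b\la\tilde\rho(t),(v^*v\D+V(t))\tilde\rho(t)\ra_X\,dt\geq 0
\]
in the positive cone of $F$, by step one. For general $\rho$ in the domain of the closure of $\tilde{\D}_v+V$, one approximates in the graph norm by smooth $\rho_n$ and uses Lemma \ref{got-functions} and Corollary \ref{cr:closures} to pass the pointwise values $\tilde\rho_n(t)\to\tilde\rho(t)$ uniformly on compacts, so $t\mapsto\la\tilde\rho(t),\tilde\rho(t)\ra_X$ is non-decreasing in $F^+$.

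If $\rho\neq 0$ then $\tilde\rho(t_0)\neq 0$ for some $t_0\geq 0$, and $a:=\la\tilde\rho(t_0),\tilde\rho(t_0)\ra_X\in F^+$ has $\|a\|_F>0$. Monotonicity gives $\int_0^M\la\tilde\rho(t),\tilde\rho(t)\ra_X\,dt\geq(M-t_0)a$ in $F^+$ for $M>t_0$, so $\|\int_0^M\la\tilde\rho,\tilde\rho\ra_X\,dt\|_F\geq(M-t_0)\|a\|_F\to\infty$; but this quantity is bounded by $\|\tilde\rho\|^2_{L^2(\R_+)\otimes\cdot}=\|\rho\|^2_{L^2(\R_+,(1+t^2)dt)\otimes\cdot}<\infty$, a contradiction, so $\rho=0$. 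The main obstacle is the rigorous integration-by-parts step for elements of the closure, which requires the approximation and Cauchy-sequence machinery of Section \ref{dom}; once this is in place, the sign analysis of step one makes the argument direct.
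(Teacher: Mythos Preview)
Your proof is correct and follows essentially the same energy-estimate approach as the paper: both substitute $\sigma(t)=\sqrt{1+t^2}\,\rho(t)$, show that $t\mapsto\la\sigma(t)|\sigma(t)\ra_X$ is non-decreasing in $F^+$ via the sign of $\D+V(t)$ on the subspace, and conclude that a nonzero solution cannot be square-integrable. The paper additionally extracts a strict bound $\D+V(t)<-c/(1+t^2)$ from the strict negativity of $\D$ on $(1-P)(X)$, but as your argument shows, the non-strict inequality $v^*v\D+V(t)\leq 0$ already yields the monotonicity of $\la\sigma|\sigma\ra_X$ and hence the contradiction; your verification that $v^*\D v\leq 0$ on the subspace (by pushing $v$ through the commuting projections) is in fact more explicit than the paper's assertion of the same fact.
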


\begin{proof}
Suppose we did have a solution $\rho\in L^2(\R_+,(1+t^2)dt)\otimes 
v^*(1-P+\Phi_0)v(1-P)(X)$.
We write $\rho(t)=\frac{1}{\sqrt{1+t^2}}\s(t)$, where $\s$ is now an (ordinary)
$L^2$ function with values in $v^*(1-P+\Phi_0)v(1-P)(X)$. A brief calculation 
shows that
\begin{align*}
\frac{1}{1+t^2}\frac{d}{dt}\la\s(t)|\s(t)\ra_X& =\frac{1}{\sqrt{1+t^2}}
\frac{d}{dt}\sqrt{1+t^2}\la\rho(t)|\rho(t)\ra_X\nno
&=\left\la-(\D+V(t))\rho(t)|\rho(t)\right\ra_X+
\left\la\rho(t)|-(\D+V(t))\rho(t)\right\ra_X.
\end{align*}
Since $v^*\D v$ is non-positive and $\D$ strictly negative on 
$v^*(1-P+\Phi_0)v(1-P)(X)$, we have the estimate
$$\D+V(t)=(1+t^2)^{-1}(t^2v^*\D v+\D)<{-c1}/(1+t^2),$$
where $c>0$ and $0<c<|r_{-1}|$ where $r_{-1}$ is the first negative 
eigenvalue of $\D$ on this subspace.
Thus
$$\frac{1}{1+t^2}\frac{d}{dt}\la\s(t)|\s(t)\ra_X\geq\frac{2c}{1+t^2}
\left\la\rho(t)|\rho(t)\right\ra_X.$$
Multiplying by $1+t^2$ and 
integrating from $0$ to $s$ gives (this is an integral of a continuous function 
into the positive cone of the $C^*$-algebra $F$)
$$\int_0^s\frac{d}{dt}\la\s(t)|\s(t)\ra_Xdt=
\la\s(s)|\s(s)\ra_X-\la\s(0)|\s(0)\ra_X\nno
\geq 2c\int_0^s \la \rho(t)|\rho(t)\ra_Xdt.
$$
The right hand side is a nondecreasing function of $s$, and if $\rho$ 
is nonzero, this function is eventually positive. 
Hence $\la\s(s)|\s(s)\ra_X$ is a continuous
non-decreasing function of $s$ in $F^+$, and so can not be integrable as can be 
seen by evaluating on a state of $F.$ Hence $\s$ is 
not an element of $L^2$ and there are no nonzero solutions $\rho$ of 
$(\tilde{\D}_v+V)(\rho)=0$ 
in the space $L^2(\R_+,(1+t^2)dt)\otimes v^*(1-P+\Phi_0)v(1-P)(X)$.
\end{proof}

Finally, we come to the subspace $L^2(\R_+,(1+t^2)dt)\otimes 
v^*(P-\Phi_0)v(1-P)(X)$. On 
this subspace we will define a parametrix which is a right inverse, 
but is not a left inverse,
instead providing solutions to our equation.
Thus,  for $t\geq s\geq 0$ define an operator $H(t,s)$ on the space $v^*(P-\Phi_0)v(1-P)(X)$
by:
$$
e^{-(t-s)v^*\D v}+\sum_{n=1}^\infty \int_s^t\int_s^{t_1}\cdots
\int_s^{t_{n-1}}((1+t_1^2)\cdots(1+t_n^2))^{-1}
e^{-(t-t_1)v^*\D v}\,v^*dv\cdots v^*dv\,e^{-(t_n-s)v^*\D v}
{\bf dt}.
$$
where $v^*\D v$ means $v^*\D v$ restricted to the subspace
$v^*(P-\Phi_0)v(1-P)(X).$

\begin{lemma}\label{lm:Second-kernel}
For all $t\geq s\geq 0$ the integrals and the 
infinite sum defining $H(t,s)$ converge absolutely in norm.
For $t\geq s\geq 0$, $H(t,s)$ is an endomorphism of the module
$v^*(P-\Phi_0)v(1-P)(X)$ with norm 
$$ \Vert H(t,s)\Vert \leq \Vert e^{-(t-s)v^*\D v}\Vert
\,e^{\tan^{-1}(t)\Vert v^*dv\Vert}
\leq e^{-(t-s)r_1}e^{\tan^{-1}(t)\Vert v^*dv\Vert},$$
where $r_1$ is the smallest positive eigenvalue of $v^*\D v$ on this
subspace. The family of endomorphisms
$H(t,s)$ satisfies the differential equations
$$\frac{d}{dt}H(t,s)=-(\D+V(t))H(t,s),\qquad \frac{d}{ds}H(t,s)=H(t,s)(\D+V(s)).$$
\end{lemma}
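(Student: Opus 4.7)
The plan is to follow the template of Lemma \ref{lm:First-kernel} closely, with two modifications reflecting the present setting. First, I will invoke Lemma \ref{lm:subspaces-preserved} to note that $v^*\D v$ and $v^*dv$ both preserve the subspace $v^*(P-\Phi_0)v(1-P)(X)$, and that on this subspace $v^*(P-\Phi_0)v$ is the spectral projection of $v^*\D v$ for the eigenvalues $\{r_k : k \geq 1\}$, so the restriction of $v^*\D v$ is bounded below by $r_1 > 0$. This yields the fundamental estimate $\Vert e^{-u v^*\D v}\Vert \leq e^{-u r_1}$ for $u \geq 0$. Second, the perturbation factors $(1+t_j^2)^{-1}$ appearing in the series for $H(t,s)$ integrate to $\tan^{-1}$-type expressions rather than the polynomial $(t-s)^n/n!$ one sees in Lemma \ref{lm:First-kernel}.

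For absolute norm convergence and the quantitative estimate, I would bound each of the $(n+1)$ exponentials using the lower bound just noted, using that their exponents sum to $(t-s)$ so that the product has norm at most $e^{-(t-s)r_1}$. Each $v^*dv$ contributes its operator norm, and the iterated integral $\int_s^t\!\int_s^{t_1}\!\cdots\!\int_s^{t_{n-1}} \prod_{j=1}^n(1+t_j^2)^{-1}\,\mathbf{dt}$ evaluates to $(\tan^{-1}(t)-\tan^{-1}(s))^n/n!$ by a direct calculation over the simplex. Summing the resulting exponential series gives the claimed bound $\Vert H(t,s)\Vert \leq e^{-(t-s)r_1}\,e^{(\tan^{-1}(t)-\tan^{-1}(s))\Vert v^*dv\Vert}$, and hence the series converges absolutely in operator norm. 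Since each summand is a composition of adjointable operators (the exponentials via the continuous functional calculus for the self-adjoint regular operator $v^*\D v$, and $v^*dv$ is bounded and adjointable), $H(t,s)$ is itself an adjointable endomorphism of the module.

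The differential equations will be established by termwise differentiation. Differentiating the $n$-th term of $H(t,s)$ in $t$ produces two contributions: (a) an interior derivative of $e^{-(t-t_1)v^*\D v}$, giving $-v^*\D v$ times the same $n$-th term, and (b) a boundary contribution at $t_1 = t$ giving $\tfrac{1}{1+t^2}v^*dv$ times the $(n-1)$-th term after the obvious reindexing. Summing over $n$ reassembles the series into $-\bigl(v^*\D v - \tfrac{1}{1+t^2}v^*dv\bigr)H(t,s)$, which, using $v^*dv = v^*\D v - v^*v\D$ together with $v^*v\D = \D$ on our subspace, equals $-(\D + V(t))H(t,s)$ as claimed. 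The companion identity $\p_s H(t,s)=H(t,s)(\D+V(s))$ arises from the symmetric Leibniz calculation at the lower endpoint $t_n = s$.

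The main technical obstacle will be justifying the interchange of differentiation with the infinite sum, together with the operator-norm convergence of the difference quotients for the exponentials $e^{-u v^*\D v}$ of the unbounded self-adjoint operator $v^*\D v$. Both issues are exactly those dealt with in Lemma \ref{lm:First-kernel}: the mean-value-theorem estimate applied to $f(x)=e^{-x}$ (which satisfies $x^2|f''(x)|$ bounded on $\R_+$), combined with the functional calculus for unbounded self-adjoint operators, yields norm convergence of the difference quotients, while the uniform and absolute convergence of the series inherited from the exponential bound of the second paragraph justifies termwise differentiation. The extra factors $(1+t_j^2)^{-1}$ are smooth and bounded in $t,s$ and introduce no new analytic difficulty; they only improve the convergence.
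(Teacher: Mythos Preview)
Your proposal is correct and follows essentially the same approach as the paper, which in fact says only that ``except for the final estimate the proof of this is similar to the proof of Lemma \ref{lm:First-kernel}'' and then writes out the norm bound. Your intermediate estimate with $(\tan^{-1}(t)-\tan^{-1}(s))^n/n!$ is slightly sharper than the paper's, which enlarges the simplex to $[0,t]^n$ to obtain $(\tan^{-1}(t))^n/n!$; either version yields the stated bound since $s\geq 0$.
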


\begin{proof} Except for the final estimate the proof of this is similar
to the proof of Lemma \ref{lm:First-kernel}.
Now, the norm of $H(t,s)$ (on $v^*(P-\Phi_0)v(1-P)(X)$) can be estimated as 
follows:
\begin{align*} 
\Vert H(t,s)\Vert&\leq \Vert e^{-(t-s)v^*\D v}\Vert
\left( 1+\sum_{n=1}^\infty\Vert v^*dv\Vert^n
\int_0^t\int_0^{t_1}\cdots\int_0^{t_{n-1}}((1+t_1^2)\cdots
(1+t_n^2))^{-1}{\bf dt}\right)\nno
&=\Vert e^{-(t-s)v^*\D v}\Vert\left(1+\sum_{n=1}^\infty 
\frac{\Vert v^*dv\Vert^n}{n!}(\tan^{-1}(t))^n\right)\nno
&=\Vert e^{-(t-s)v^*\D v}\Vert e^{\tan^{-1}(t)\Vert v^*dv\Vert}\leq e^{-(t-s)r_1} e^{\tan^{-1}(t)\Vert v^*dv\Vert},
\end{align*}
where $r_1$ is the smallest positive eigenvalue of $v^*\D v$ on the subspace.
\end{proof}

We now define a local parametrix on the space
$L^2(\R_+,(1+t^2)dt)\otimes v^*(P-\Phi_0)v(1-P)(X).$ Let $\rho$ be given by a
continuous function in $L^2(\R_+,(1+t^2)dt)\otimes v^*(P-\Phi_0)v(1-P)(X)$
and let $t\geq 0.$ Define
$$(\tilde{R}\rho)(t):=
\frac{1}{\sqrt{1+t^2}}\int_0^tH(t,s)\sqrt{1+s^2}\rho(s)ds.$$
As in the proof of Lemma \ref{lm:no-solns-1} $\tilde{R}$ defines a closable
linear mapping locally on $[0,M]$ on it's initial dense domain of continuous 
functions. We note that $\tilde{R}(\rho)$ is differentiable.

\begin{lemma}\label{lm:solns-1} 
For every vector $x$ in the subspace $v^*(P-\Phi_0)v(1-P)(X)$ there exists a 
unique element 
$\rho\in L^2(\R_+,(1+t^2)dt)\otimes v^*(P-\Phi_0)v(1-P)(X)$ with $\rho(0)=x$ and 
$(\tilde{\D}_v+V)\rho=0$. Moreover, these are the only solutions in the space
$L^2(\R_+,(1+t^2)dt)\otimes v^*(P-\Phi_0)v(1-P)(X).$
\end{lemma}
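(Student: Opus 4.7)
The plan is to exhibit every solution explicitly via the propagator $H(t,0)$ of Lemma \ref{lm:Second-kernel}, and to deduce uniqueness from the second differential equation $\frac{d}{ds}H(t,s)=H(t,s)(\D+V(s))$ recorded there. The algebraic identity that makes everything line up is
$$v^*\D v - \frac{v^*dv}{1+t^2} = \frac{v^*v\D + t^2\,v^*\D v}{1+t^2} = \D + V(t),$$
so Lemma \ref{lm:Second-kernel} literally says $\p_t(H(t,0)x)=-(\D+V(t))H(t,0)x$.

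For existence, given $x\in v^*(P-\Phi_0)v(1-P)(X)$ I set
$$\rho_x(t) := (1+t^2)^{-1/2}\,H(t,0)x.$$
The substitution $\sigma(t):=\sqrt{1+t^2}\,\rho(t)$ turns $(\tilde\D_v+V)\rho=0$ into the ODE $\p_t\sigma = -(\D+V(t))\sigma$, which by the identity above is satisfied by $\sigma_x(t)=H(t,0)x$; hence $(\tilde\D_v+V)\rho_x=0$ and $\rho_x(0)=H(0,0)x=x$. Integrability in the weighted $L^2$-module follows from the bound $\|H(t,0)\|\le e^{\pi\|v^*dv\|/2}e^{-tr_1}$ of Lemma \ref{lm:Second-kernel}: the $F^+$-valued integrand $(1+t^2)\la\rho_x(t)|\rho_x(t)\ra=\la H(t,0)x|H(t,0)x\ra$ is scalar-dominated by an exponentially decaying function, so the $F$-valued integral converges in norm.

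Uniqueness is then the clean step. For a smooth solution $\rho$ with $\rho(0)=x$, writing $\sigma(s)=\sqrt{1+s^2}\,\rho(s)$ and differentiating $s\mapsto H(t,s)\sigma(s)$ I find
$$\frac{d}{ds}\bigl(H(t,s)\sigma(s)\bigr) = H(t,s)\bigl((\D+V(s))\sigma(s)+\p_s\sigma(s)\bigr) = 0,$$
so integrating from $0$ to $t$ and using $H(t,t)=1$ forces $\sigma(t)=H(t,0)x$, i.e.\ $\rho=\rho_x$.

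The hard part will be extending uniqueness from smooth $\rho$ to an arbitrary element of the closure of $\tilde\D_v+V$ lying in the stated weighted $L^2$-module: I need $\rho(0)$ to make sense and the product-rule computation above to be rigorous. My approach will mirror the proof of Lemma \ref{lm:no-solns-1}: localize to $[0,M]$, apply Lemma \ref{lm:operator} with the parametrix $\tilde R$ to upgrade the identity $\tilde R\bigl((\tilde\D_v+V)\rho\bigr)(t)=\rho(t)-(1+t^2)^{-1/2}H(t,0)\rho(0)$ from smooth $\rho$ to the whole domain, and then let $M\to\infty$. The function-representation needed to make $\rho(0)$ meaningful will come from a weighted analogue of Lemma \ref{got-functions}, obtained by the same shift-operator argument applied to $L^2(\R_+,(1+t^2)dt)$ in place of $L^2(\R_+)$.
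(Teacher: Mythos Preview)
Your approach is essentially the paper's own. The paper also localises to $[0,M]$, uses the parametrix $\tilde R$ together with Lemma \ref{lm:operator} to obtain
$(\tilde R(\tilde\D_v+V)\rho)(t)=\rho(t)-(1+t^2)^{-1/2}H(t,0)\rho(0)$
for all $\rho$ in the domain, and reads off both uniqueness (set the left side to zero) and existence (define $\rho$ by \eqref{eq:soln-formula} and use that $\tilde R$ is injective, which follows from $(\tilde\D_v+V)\tilde R=Id$). The only cosmetic difference is that you verify existence directly from the first differential equation $\p_tH(t,0)=-(\D+V(t))H(t,0)$ rather than via injectivity of $\tilde R$; the paper's route and yours are equivalent, and your observation that the weighted $L^2$ analogue of Lemma \ref{got-functions} is what makes $\rho(0)$ meaningful is exactly the content of the remark preceding Lemma \ref{lm:First-kernel} that $\tilde\D_v$ on $L^2(\R_+,(1+t^2)dt)\otimes v^*v(X)$ is ``completely analogous'' to $T_+$ on $L^2(\R_+)\otimes X$.
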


\begin{proof}
As in the proof of Lemma \ref{lm:no-solns-1} we work locally with $t$ in the 
interval $[0,M]$, however, we suppress the local notations $\rho_M,$ etc.
Take $\rho$ a continuous function in 
$L^2(\R_+,(1+t^2)dt)\otimes v^*(P-\Phi_0)v(1-P)(X)$ with values in 
${\rm dom}(\D)$ and compute using the differential equations from Lemma \ref{lm:Second-kernel}.
%\begin{align*} 
$$\frac{1}{\sqrt{1+t^2}}\p_t\sqrt{1+t^2}(\tilde{R}\rho)(t)
%&
=\frac{1}{\sqrt{1+t^2}}\p_t\left(\int_0^tH(t,s)\sqrt{1+s^2}\rho(s)ds\right)\nno
%&
=\rho(t)-(\D+V(t))(\tilde{R}\rho)(t),$$
%\end{align*}
 Thus
$(\tilde{\D}_v+V(t))(\tilde{R}\rho)(t)=\rho(t)$ and $\tilde{R}$
is  injective.
The injectivity is first proved locally on $[0,M]$ by using
Lemma \ref{lm:operator} which easily implies global injectivity.
On the other hand if $\rho$ is smooth and lies  in the domain of 
$\tilde{\D}_v+V$ then
$(\tilde{\D}_v+V)(\rho)$ is continuous and so locally we get:
\begin{align*}
(\tilde{R}(\tilde{\D}_v+V)\rho)(t)
&=\frac{1}{\sqrt{1+t^2}}
\int_0^tH(t,s)\left(\p_s(\sqrt{1+s^2}\rho(s))+\sqrt{1+s^2}(\D+V(s)) 
\rho(s)\right)ds\nno
&=\frac{1}{\sqrt{1+t^2}}\int_0^t\p_s(H(t,s)\sqrt{1+s^2}\rho(s))ds
-\frac{1}{\sqrt{1+t^2}}\int_0^t(\p_sH(t,s))\,\sqrt{1+s^2}\rho(s)ds\nno
&\qquad\qquad\qquad+\frac{1}{\sqrt{1+t^2}}
\int_0^tH(t,s)\sqrt{1+s^2}(\D+V(s)) \rho(s)ds\nno
&=\rho(t)-\frac{1}{\sqrt{1+t^2}}H(t,0)\rho(0),
\end{align*}
where we have again used the differential equations from Lemma 
\ref{lm:Second-kernel}. 
Applying Lemma \ref{lm:operator} we obtain this equation for all
$\rho\in {\rm dom}(\tilde{\D}_v+V).$ By the estimate on 
$\Vert H(t,0)\Vert$ in the previous lemma,
the function $\frac{1}{\sqrt{1+t^2}}H(t,0)\rho(0)$ is in 
$L^2(\R_+,(1+t^2)dt)\otimes v^*(P-\Phi_0)v(1-P)(X),$
and so if $\rho$ is in the kernel of $\tilde{\D}_v+V$ we have locally and hence
globally:
\begin{equation} 
\rho(t)=(1+t^2)^{-1/2}H(t,0)\rho(0).
\label{eq:soln-formula}
\end{equation}
Conversely, with $x=\rho(0)\in v^*(P-\Phi_0)v(1-P)(X)$, 
 Eq. \eqref{eq:soln-formula} defines a solution as $\tilde{R}$ is
injective.
\end{proof}

Putting together Lemmas \ref{lm:no-solns-1}, \ref{lm:no-solns-2}, 
\ref{lm:solns-1}, we 
have the following preliminary result.

\begin{cor}\label{cr:ker} 
The kernel of $\tilde{\D}_v+V$ on $L^2(\R_+,(1+t^2)dt)\otimes v^*v(X)$
is isomorphic to the right $F$-module 
$v^*(P-\Phi_0)v(1-P)(X).$
Consequently
$${\rm{ker}}(e_v((\p_t+\D)\otimes 1_2)e_v)=
{\rm{ker}}(\widehat{e}_v((\p_t+\D)\otimes 1_2)\widehat{e}_v)
\cong{\rm{ker}}(\tilde{\D}_v+V)\cong v^*(P-\Phi_0)v(1-P)(X).$$
\end{cor}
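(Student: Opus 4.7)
The plan is to assemble the three preceding lemmas \ref{lm:no-solns-1}, \ref{lm:no-solns-2}, \ref{lm:solns-1} by splitting the module $L^2(\R_+,(1+t^2)dt)\otimes v^*v(X)$ into an orthogonal sum of three pieces, each invariant under $\tilde{\D}_v+V$. Specifically, I would verify using Lemma \ref{lm:subspaces-preserved} that on the time-zero fibre $v^*v(X)$ the three projections
$$1\otimes P,\qquad 1\otimes v^*(1-P+\Phi_0)v(1-P),\qquad 1\otimes v^*(P-\Phi_0)v(1-P)$$
are mutually orthogonal, sum to $1\otimes v^*v$, and each commutes with both $\D$ (since all are sums of spectral projections of $\D$ conjugated by $v$, which commute with $\D$ on this subspace) and $v^*dv$. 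Consequently $\tilde{\D}_v+V$ preserves each of the three summands of $L^2(\R_+,(1+t^2)dt)\otimes v^*v(X)$ obtained by tensoring with these projections.

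Next I would apply the three lemmas componentwise. On the first summand Lemma \ref{lm:no-solns-1} forces the kernel to be zero; on the second, Lemma \ref{lm:no-solns-2} does the same; on the third, Lemma \ref{lm:solns-1} provides, for every $x\in v^*(P-\Phi_0)v(1-P)(X)$, a unique solution $\rho$ with $\rho(0)=x$. The map $\rho\mapsto\rho(0)$ is a right $F$-module map (as $H(t,s)$ and the weight $(1+t^2)^{-1/2}$ commute with the right $F$-action), its inverse being $x\mapsto (1+t^2)^{-1/2}H(t,0)x$ by the explicit formula \eqref{eq:soln-formula}. This identifies $\mathrm{ker}(\tilde{\D}_v+V)\cong v^*(P-\Phi_0)v(1-P)(X)$ as right $F$-modules.

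To finish, I would connect this back to the original operator. Recall from the preceding subsection that, using the orthogonal decomposition $e_v=\widehat{e_v}+e_v^0$ of \eqref{orthod}, the restriction of $e_v((\p_t+\D)\otimes 1_2)e_v$ to the range of $e_v^0=\bma 1-vv^*&0\\0&0\ema$ has the identity as a parametrix (via the computation involving $Q_+$ just before \eqref{orthod}), so its kernel there is trivial. On the range of $\widehat{e_v}$, Lemma \ref{inmodule}(1) identifies each element with its second coordinate $\xi_2\in v^*v(X)$, and the reduction carried out in Section 5.2 shows that $\widehat{e_v}((\p_t+\D)\otimes 1_2)\widehat{e_v}\xi=0$ is equivalent to $(\tilde{\D}_v+V)\xi_2=0$ with $\xi_2\in L^2(\R_+,(1+t^2)dt)\otimes v^*v(X)$. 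The APS boundary condition $e_v(0)(P\otimes 1_2)e_v(0)\xi(0)=0$ becomes $v^*vP\xi_2(0)=0$, i.e. $\xi_2(0)\in (1-P)v^*v(X)$, which is automatic for the solutions produced in the third summand (whose initial values lie in $v^*(P-\Phi_0)v(1-P)(X)\subset(1-P)(X)$) and is compatible with the triviality results in the other two summands.

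The main technical point, and the one I would double-check carefully, is the invariance and orthogonality of the three summands: one has to use both commutation hypotheses on $v$ from Theorem \ref{mainresult} (and the consequences spelled out in Lemma \ref{lm:subspaces-preserved}) to see that $v^*\D v$ and $v^*dv=v^*\D v-v^*v\D$ really do leave $v^*(P-\Phi_0)v(1-P)(X)$ and $v^*(1-P+\Phi_0)v(1-P)(X)$ separately invariant, so that the integral kernels $U(t,s)$ and $H(t,s)$ used in the preceding lemmas make sense on the respective pieces. Granted this, the corollary follows by direct sum.
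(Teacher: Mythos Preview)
Your proposal is correct and follows exactly the paper's approach: the paper's own proof is simply the sentence ``Putting together Lemmas \ref{lm:no-solns-1}, \ref{lm:no-solns-2}, \ref{lm:solns-1}, we have the following preliminary result,'' and you have spelled out the orthogonal decomposition, the invariance under $\tilde{\D}_v+V$ via Lemma \ref{lm:subspaces-preserved}, and the reduction from $e_v((\p_t+\D)\otimes 1_2)e_v$ to $\tilde{\D}_v+V$ that the paper carries out in Section~5.2. One trivial slip: the $Q_+$ computation showing the kernel vanishes on the range of $e_v^0$ appears just \emph{after} equation \eqref{orthod}, not before.
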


Thus we have part of the Index of $(e_v(\hat{\D}\otimes 1_2)e_v).$ To 
complete the 
calculation, we compute the kernel of the adjoint operator $e_v(-\p_t+\D)e_v$.
We follow an essentially similar path, 
but must take a little more care with the extended $L^2$-space $\hat{\E}.$

\subsection{The kernel of the adjoint}

%Our first observation is that:
%$$\rho\in{\rm{ker}}(\tilde{\D}_v+V)\mapsto [(1+Q_vV)\rho](0)=\xi(0)\in
%(1-P)v^*(P-\Phi_0)v(X)$$
%is an isomorphism  of right $F$-modules. That is,
%$$[{\rm{ker}}(\tilde{\D}_v+V)]=[(1-P)v^*(P-\Phi_0)v(X)]
%=[(1-P)v^*Pv(X)]-[(1-P)v^*\Phi_0v(X)]\in K_0(F).$$

As explained above, we must compute the kernel of the operator
$e_v\bma -\p_t +\D & 0\\ 0 & -\p_t +\D \ema e_v$
as a map from 
$e_v\bma \hat{\E}\\ \hat{\E}\ema$ to $e_v\bma\E\\ \E\ema.$
Recall that $M(F,A)$ acts as {\bf zero} on the constant
$X_0$-valued functions but the added unit element acts as the identity. Thus
for a pair of constant functions
$\bma x_1\\x_2\ema\in\bma \hat{\E}\\ \hat{\E}\ema$
we have $e_v\bma x_1\\x_2\ema=
\bma x_1\\0\ema.$
Hence $e_v\bma \hat{\E}\\ \hat{\E}\ema\subseteq
\bma \hat{\E}\\ \E\ema.$
For $\xi\in e_v\bma \hat{\E}\\\hat{\E}\ema$ to be in the domain of 
$e_v((-\p_t+\D)\otimes 1_2)e_v$ we impose the boundary condition:
$$\bma (1-vv^*)(1-P) & 0 \\ 0 & v^*v(1-P)\ema\xi(0) = 0.$$
For the constant function 
$\bma x_1\\0\ema\in e_v\bma \hat{\E}\\ \hat{\E}\ema$ to be in the domain 
this means that $x_1$ must satisfy $(1-vv^*)(1-P)(x_1)=0.$ However, this is
automatic as $x_1\in X_0$ so that $(1-P)(x_1)=0.$ Thus the domain of 
 $e_v((-\p_t+\D)\otimes 1_2)e_v$ extended to the constant $X_0$-valued
functions $(X_0\oplus X_0)^T$
is $(X_0\oplus 0)^T.$ Of course, the extended operator 
$e_v((-\p_t+\D)\otimes 1_2)e_v$ is identically $0$ here.
{\bf It is important to note that: ${\rm dom}(e_v((-\p_t+\D)\otimes 1_2)e_v)
\subseteq e_v(\hat{\E}\oplus\E)^T.$}

As before
%$e_v((-\p_t+\D)\otimes 1_2)e_v$
we use the orthogonal decomposition
of $e_v$ to enable separate analysis of the two
subspaces:
$$\widehat{e}_v\bma\hat{\E}\\ \hat{\E}\ema\to\widehat{e}_v\bma\E\\ \E\ema
\;\;\mbox{and}\;\; 
e^0_v\bma \hat{\E}\\ \hat{\E}\ema \to e^0_v\bma\E\\ \E\ema.$$
Now, 
$$e^0_v\bma \hat{\E}\\ \hat{\E}\ema=e^0_v\bma \E\\\ \E\ema \oplus
\bma (1-vv^*)(X_0)\\ 0\ema.$$
As in the case of $e_v((\p_t+\D)\otimes 1_2)e_v$ we have 
$e_v((-\p_t+\D)\otimes 1_2)e_v$ is one-to-one on $e_v^0(\E\oplus\E)^T$
and so the kernel there is $0.$ Since 
$e_v((-\p_t+\D)\otimes 1_2)e_v$ is identically $0$ on 
$e_v^0((1-vv^*)(X_0)\oplus 0)^T\cong (1-vv^*)(X_0),$ we have the following 
result.

\begin{prop}\label{pr:first-extended-solns}The kernel
of $e_v((-\p_t+\D)\otimes 1_2)e_v$ restricted to 
$e^0_v\bma \hat{\E}\\ \hat{\E}\ema$ is isomorphic to 
the right  $F$-module $(1-vv^*)(X_0).$
\end{prop}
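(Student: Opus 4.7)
The plan is to carry out the analogue, for the adjoint operator, of the analysis that gave Corollary \ref{cr:ker}, but restricted to the $e^0_v$ summand. First I would note that the domain decomposes as the internal orthogonal sum
\[
e^0_v\bma \hat{\E}\\ \hat{\E}\ema = e^0_v\bma \E\\ \E\ema \,\oplus\, \bma (1-vv^*)(X_0)\\ 0\ema,
\]
as already stated in the paragraph preceding the proposition. Since $1-vv^*\in F$ commutes with $\D$, and $e^0_v=\bma 1-vv^* & 0\\ 0 & 0\ema$ has a constant symbol in $t$, the projection $e^0_v$ commutes with $(-\p_t+\D)\otimes 1_2$; consequently $e^0_v((-\p_t+\D)\otimes 1_2)e^0_v$ preserves each of the two summands above, and I may treat them separately.

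On the first summand $e^0_v(\E\oplus\E)^T$, which corresponds to the element $\xi=((1-vv^*)\xi_1,0)^T$, the boundary condition $(1-vv^*)(1-P)\xi_1(0)=0$ is simply the usual APS condition $(1-P)\xi_1(0)=0$ after restricting to the closed $F$-submodule $(1-vv^*)X\subseteq X$. Because $1-vv^*$ commutes with $\D$ and with $P$, the self-adjoint regular operator $\D$ restricts to a self-adjoint regular operator on this submodule with the same spectral data, so the parametrix construction of Corollary \ref{cr:doms-invses} and Proposition \ref{pr:self-adj} applies verbatim: the operator $-\p_t + \D$ restricted there has a left inverse $Q_-$ on the complement of the kernel piece $X_0$. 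Thus the only potential kernel inside $e^0_v(\E\oplus\E)^T$ would have to live in $L^2(\R_+)\otimes (1-vv^*)X_0$, but on that subspace the equation reduces to $\p_t\xi_1=0$ with boundary condition $\xi_1(0)=0$, whose only $L^2$ solution is zero. Hence the kernel on this piece is trivial.

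On the second summand $((1-vv^*)X_0\oplus 0)^T$, the operator $(-\p_t+\D)\otimes 1_2$ acts as zero, both because $\D$ is extended to act as zero on the $X_0$-factor of $\hat{\E}$ and because $\p_t$ is extended as zero on the constant-in-$t$ summand $X_0\hookrightarrow\hat{\E}$. So every element of $((1-vv^*)X_0\oplus 0)^T$ lies in the kernel, and the map $x\mapsto (x,0)^T$ is the desired $F$-module isomorphism from $(1-vv^*)(X_0)$ onto this kernel piece. Combining the two subcases gives the stated result.

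The only mildly delicate point is verifying that the restricted APS parametrix really does rule out nonzero solutions on the first summand; but since $1-vv^*$ is a central idempotent for all the operators in play (it commutes with $P$, $\Phi_0$, $\D$, and with $\p_t\otimes 1$ trivially), this reduces to the scalar-valued analysis already carried out in Section \ref{secAPS}, and no new technical work is required.
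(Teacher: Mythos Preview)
Your approach is exactly the paper's: decompose $e^0_v(\hat{\E}\oplus\hat{\E})^T$ into the $L^2$ piece $e^0_v(\E\oplus\E)^T$ and the constant piece $((1-vv^*)X_0\oplus 0)^T$, use the parametrix $Q_-$ to kill the kernel on the first, and observe the operator is identically zero on the second. The paper compresses the first step to a single sentence (``as in the case of $e_v((\p_t+\D)\otimes 1_2)e_v$''), whereas you spell out the restriction to $(1-vv^*)X$.

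One small slip: on $L^2(\R_+)\otimes(1-vv^*)X_0$ the APS boundary condition $(1-P)\xi_1(0)=0$ is \emph{automatic} (since $\Phi_0\leq P$), i.e.\ the boundary condition there is \emph{free}, not $\xi_1(0)=0$ as you wrote. Your conclusion is unaffected, because the only $L^2(\R_+)$ solution of $\p_t\xi_1=0$ is zero regardless of the boundary value; but the reason is the $L^2$ constraint, not the boundary condition.
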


These solutions are a rather trivial type of {\bf extended solution} to the 
adjoint equation. Next: %We now perform the computation 
%$\widehat{e_v}[(-\p_t+\D)\otimes 1_2]\widehat{e_v}$ times a vector 
%$(\xi_1,\xi_2)^T$ in $\widehat{e_v}(\hat{\E}\oplus\hat{\E})^T$  
%to obtain:
$$\widehat{e_v}\bma (-\p_t+\D) & 0 \\ 0 & (-\p_t+\D)\ema\widehat{e_v}\bma \xi_1 \\
\xi_2\ema =\bma -itv(-\p_t+\D)\xi_2 +\frac{it^2}{1+t^2}v\xi_2 -\frac{it^3}{1+t^2}
dv\xi_2 \\ (-\p_t+\D)\xi_2 -\frac{t}{1+t^2}\xi_2 +\frac{t^2}{1+t^2}v^*dv\xi_2
\ema.$$
%This vector is in the range of $\widehat{e_v}$ so that 
%The first coordinate is $-itv$ times the second coordinate as in
%Lemma \ref{inmodule}.
That is, any vector $(\rho_1,\rho_2)^T$ in the range of
$\widehat{e_v}[(\p_t+\D)\otimes 1_2]\widehat{e_v}$ satisfies $\rho_1(t)=-itv(\rho_2)(t)$ and as before, after simplifying,
%$$\rho_2(t)=(-\p_t+\D)\xi_2 -\frac{t}{1+t^2}\xi_2  +v^*dv\xi_2 
%-\frac{v^*dv}{1+t^2}\xi_2$$
$$\rho_2(t)=\left(\frac{-1}{\sqrt{1+t^2}}\p_t\circ\sqrt{1+t^2}
+v^*v\D  + \frac{t^2v^*dv}{1+t^2}\right)\xi_2
=:(\widehat{\D}_v + V)\xi_2$$
$$\mbox{where}\;\;\;\widehat{\D}_v=\left( 
\frac{-1}{\sqrt{1+t^2}}\p_t\circ\sqrt{1+t^2}+v^*v\D \right)
\;\;\;\mbox{and}\;\;\; V=\frac{t^2}{1+t^2}\otimes (v^*dv):=V_0\otimes (v^*dv).$$
So in order to compute the kernel of 
$\widehat{e_v}[(-\p_t+\D)\otimes 1_2]\widehat{e_v}$ acting on the range of
$\widehat{e_v}$, it suffices to compute the kernel of $\widehat{\D}_v + V$ 
acting
on vectors $\xi_2\in \E$ 
satisfying $v^*v(\xi_2)=\xi_2$
and $-itv(\xi_2)\in \hat{\E}$. {\bf As opposed to the $T_+$ case,}
such vectors $\xi_2$ need only lie in the larger space
$L^2(\R_+)\otimes v^*v(X),$ while $\xi_1(t)=-itv(\xi_2(t))$ {\bf may} have a 
nonzero limit at $\infty$ in $X_0$ 
%Thus, we are reduced to solving one equation on $L^2(\R_+)\otimes v^*v(X)$:
%$$(\widehat{\D}_v +V)\xi_2:
%=\left(\frac{\mathbf{-1}}{\sqrt{1+t^2}}\p_t\circ\sqrt{1+t^2}
%+v^*v\D  + \frac{t^2v^*dv}{1+t^2}\right)\xi_2=0,\qquad 
%V=\frac{t^2}{1+t^2}\otimes v^*dv$$
subject to the boundary conditions 
%$\xi_2=v^*v\xi_2$ ; 
$P(\xi_2(0))=\xi_2(0).$
%{\bf and $-itv\xi_2\in\hat{\E}.$}

Again we split $L^2(\R_+)\otimes v^*v(X)$ into the range of $1\otimes P$ and
$1\otimes (1-P)$. 
On the image of $1\otimes (1-P)$ we define a two parameter family of 
bounded operators which will be the integral kernel of a local parametrix for 
$\widehat{\D}_v+V$ on this space. 
Thus with  $\D$ standing for $(1-P)\D$ and
for $t\geq s\geq 0$, define an operator on $(1-P)v^*v(X)$ by
$$ W(t,s)=e^{(t-s)\D}+\sum_{n=1}^\infty (-1)^n
\int_s^t\int_s^{t_1}\cdots\int_s^{t_{n-1}} 
e^{(t-t_1)\D}V(t_1)e^{(t_1-t_2)\D}V(t_2)\cdots V(t_n)e^{(t_n-s)\D}{\bf dt}.
$$

\begin{lemma}\label{lm:First-adjoint-kernel} 
For all $t\geq s\geq 0$ the integrals and the 
infinite sum defining $W(t,s)$ converge absolutely in norm.
For all $t\geq s\geq 0$ we have (in the operator norm for endomorphisms 
of $v^*v(X)$) 
$$\Vert W(t,s)\Vert\leq \Vert e^{(t-s)\D}\Vert\, e^{(t-s)\Vert v^*dv\Vert}.$$
Moreover $W(t,s)$ satisfies the differential equations
$$ \frac{d}{dt}W(t,s)=(\D+V(t))W(t,s),\qquad \frac{d}{ds}W(t,s)=-W(t,s)(\D+V(s)).$$
\end{lemma}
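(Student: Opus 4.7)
The plan is to mimic the proof of Lemma \ref{lm:First-kernel} almost verbatim, since $W(t,s)$ is structurally the Dyson-type series obtained from that of $U(t,s)$ by replacing $-P\D$ with $+\D$ (now restricted to $(1-P)v^*v(X)$) and the perturbation $V$ with $-V$. The key observation that makes the norm estimate work out in exactly the same way is that on $(1-P)v^*v(X)$ one has $\D<0$ (since $1-P=\mathcal{X}_{(-\infty,0)}(\D)$), so for $t\geq s\geq 0$ the semigroup $e^{(t-s)\D}$ is a contraction. Together with the crude uniform estimate $\Vert V(t)\Vert\leq\Vert v^*dv\Vert$ and the standard volume identity
$$\int_s^t\int_s^{t_1}\cdots\int_s^{t_{n-1}}{\bf dt}=\frac{(t-s)^n}{n!},$$
the $n$-th term of the series is dominated in operator norm by $\Vert e^{(t-s)\D}\Vert\,\Vert v^*dv\Vert^n (t-s)^n/n!$, and summing yields the claimed bound.

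Having justified absolute and uniform convergence on every compact subset of $\{(t,s):0\leq s\leq t\}$, I would then turn to the differential equations. Formal termwise differentiation in $t$ of the $n$-th summand produces two contributions: one from differentiating the leftmost factor $e^{(t-t_1)\D}$, which supplies the factor $\D$, and one from the upper limit of the outermost integral, which collapses the $n$-th term to something matching the $(n-1)$-th term of $(\D+V(t))W(t,s)$. The telescoping of these two contributions (via the same product-rule-with-add-and-subtract trick used in Lemma \ref{lm:First-kernel}) will give $\frac{d}{dt}W(t,s)=(\D+V(t))W(t,s)$; the sign differs from the $U(t,s)$ case precisely because the exponents now carry $+\D$ rather than $-P\D$, and $V$ appears without the extra minus sign built into $U$. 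The equation in $s$ is proved by the same bookkeeping applied to the rightmost factor and the lower integration limit.

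The only step requiring genuine care, exactly as in Lemma \ref{lm:First-kernel}, is promoting the formal derivatives to honest operator-norm limits of difference quotients, since $\D$ is unbounded. This is handled by the mean-value-type estimate cited in the proof of Lemma \ref{lm:First-kernel}: for $f\in C^{(2)}(\R_-)$ with $x^2|f''(x)|$ bounded, the difference quotient of $f((at+b)\D)$ converges in operator norm to $a\D f'((at+b)\D)$ on the relevant spectral subspace, which applies to $f(x)=e^x$ on $(-\infty,0)$ because $\D$ is strictly negative on $(1-P)v^*v(X)$. Once this one-term differentiation is in hand, the product-rule manipulation and the uniform absolute convergence of both the original series and the differentiated series let us differentiate termwise and conclude.

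The main obstacle, then, is not really an obstacle at all so much as a matter of keeping signs straight: verifying that $e^{(t-s)\D}$ is a contraction on $(1-P)v^*v(X)$ (not on $Pv^*v(X)$, where it would blow up) and that the resulting signs in the differential equations agree with the stated formulas. After that, the estimates and the differentiability arguments are identical to those already carried out for Lemma \ref{lm:First-kernel}, so no new technique is required.
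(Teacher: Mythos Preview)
Your proposal is correct and takes exactly the same approach as the paper, which simply says the proof ``is very similar to the proof of Lemma~\ref{lm:First-kernel} so we omit the details.'' One small caveat: your description of $W$ as arising from $U$ by also ``replacing $V$ with $-V$'' is not quite right, since both series carry the same $(-1)^n$ factor; the only structural change is the sign in the exponent (which is what makes the semigroup contractive on $(1-P)v^*v(X)$, as you correctly note), so keep the sign bookkeeping straight when you actually verify the differential equations.
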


\begin{proof} This is very similar to the proof of Lemma \ref{lm:First-kernel}
so we omit the details.
\end{proof}

Using these results we construct a local parametrix for 
$(\widehat{\D}_v+V)(1\otimes(1-P)).$
For $\rho$ a continuous function in 
$L^2(\R_+)\otimes (1-P)v^*v(X)$ define
$$(\widehat{Q}\rho)(t):=
-(1+t^2)^{-1/2}\int_0^tW(t,s)\sqrt{1+s^2}\rho(s)ds.$$
Observe that $(\widehat{Q}\rho)(0)=0$, and is differentiable, and so if
$\rho$ has range in ${\rm dom}(\D)$ then
$\widehat{Q}(\rho)$ is locally in 
the domain of $\widehat{\D}_v+V$. As in the proof of Lemma \ref{lm:no-solns-1},
$\widehat{Q}$ defines a closable linear mapping locally on $[0,M]$ on its
initial dense domain of continuous functions. All our calculations below are 
local as in Lemma \ref{lm:no-solns-1}.

\begin{lemma}\label{lm:no-adj-solns-1} 
In the space
$L^2(\R_+)\otimes (1-P)v^*v(X)$
the equation $(\widehat{\D}_v+V)\rho=0$ has no nonzero solutions and therefore it has no nonzero solutions in the subspace
$L^2(\R_+,(1+t^2)dt)\otimes (1-P)v^*v(X).$
\end{lemma}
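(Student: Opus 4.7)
The plan is to mirror the strategy of Lemma \ref{lm:no-solns-1}, displaying the operator $\widehat{Q}$ defined via the kernel $W(t,s)$ as a local left inverse of $\widehat{\D}_v+V$ on $L^2(\R_+)\otimes(1-P)v^*v(X)$. As in that earlier argument, I would first localise to $[0,M]$ using $E_M\otimes 1$, so that $\widehat{Q}_M$ becomes a densely defined, closable operator whose formal adjoint has the analogous integral form built from $W(s,t)^*$; all subsequent computations take place on the dense subspace of smooth, compactly supported functions on $[0,M]$ with values in $(1-P)v^*v(X)$.

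The central calculation is an integration by parts. For a smooth $\rho$ in the local domain of $\widehat{\D}_v+V$ with values in $(1-P)v^*v(X)$, the APS boundary condition $v^*v(1-P)\rho(0)=0$ combined with $\rho(0)\in(1-P)v^*v(X)$ forces $\rho(0)=0$. Writing
\[
\sqrt{1+s^2}(\widehat{\D}_v+V)\rho(s)=-\p_s(\sqrt{1+s^2}\rho(s))+\sqrt{1+s^2}(\D+V(s))\rho(s),
\]
using the differential equation $\p_sW(t,s)=-W(t,s)(\D+V(s))$ from Lemma \ref{lm:First-adjoint-kernel} and $W(t,t)=1$, integration by parts yields
\[
(\widehat{Q}(\widehat{\D}_v+V)\rho)(t)=\rho(t)-\frac{1}{\sqrt{1+t^2}}W(t,0)\rho(0)=\rho(t),
\]
with the two occurrences of the $(\D+V(s))$-term cancelling exactly and the boundary contribution at $s=0$ vanishing.

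An application of Lemma \ref{lm:operator} then promotes this local identity to a global left-inverse statement, giving injectivity of $\widehat{\D}_v+V$ on $L^2(\R_+)\otimes(1-P)v^*v(X)$ and hence the first assertion. The second assertion follows immediately since $L^2(\R_+,(1+t^2)dt)\otimes(1-P)v^*v(X)$ embeds continuously into $L^2(\R_+)\otimes(1-P)v^*v(X)$. The main thing to watch is the interplay between the closures and the local-to-global passage, but this is handled exactly as in Lemma \ref{lm:no-solns-1}; apart from the change from $U(t,s)$ to $W(t,s)$ and the swap of $P$ with $1-P$ forced by the adjoint boundary condition, the argument is a direct transcription.
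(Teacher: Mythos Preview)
Your proposal is correct and follows essentially the same route as the paper: you localise to $[0,M]$, verify that $\widehat{Q}_M$ is closable via its formal adjoint, carry out the integration-by-parts computation using $\p_sW(t,s)=-W(t,s)(\D+V(s))$ to obtain $\widehat{Q}(\widehat{\D}_v+V)\rho=\rho-(1+t^2)^{-1/2}W(t,0)\rho(0)=\rho$, and then invoke Lemma~\ref{lm:operator} and the argument of Lemma~\ref{lm:no-solns-1} to pass to the full domain. The paper's proof is the same computation with the same boundary-condition observation that $\rho(0)\in(1-P)v^*v(X)$ forces $\rho(0)=0$.
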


\begin{proof}
%For $\rho$ a continuous function with values in ${\rm dom}(\D)$ we compute
%\begin{align*} \frac{-1}{\sqrt{1+t^2}}\p_t\sqrt{1+t^2}(\widehat{Q}\rho)(t) 
%&= \frac{1}{\sqrt{1+t^2}}\p_t\left(\int_0^t W(t,s)\sqrt{1+s^2}\rho(s)ds\right)\nno
%&=\rho(t)-(\D+V(t))(\widehat{Q}\rho)(t),
%\end{align*}
%where we have used the formula for the derivative of $W(t,s)$ in 
%Lemma~\ref{lm:First-adjoint-kernel}. Hence 
%$\widehat{Q}$ provides a right inverse for 
%$(\widehat{\D}_v+V)P$. 
Let $\rho$ be a smooth function in the domain of $(\widehat{\D}_v+V)(1-P)$:
\begin{align*}  (\widehat{Q}(\widehat{\D}_v+V)\rho)(t)
&=\frac{-1}{\sqrt{1+t^2}}\int_0^tW(t,s)\left(-\p_s(\sqrt{1+s^2}\rho(s))+
\sqrt{1+s^2}(\D+V(s))\rho(s)\right)ds\nno
&=\frac{1}{\sqrt{1+t^2}}\int_0^t\p_s(W(t,s)\sqrt{1+s^2}\rho(s))ds 
-\frac{1}{\sqrt{1+t^2}}\int_0^t(\p_s W(t,s)) \sqrt{1+s^2}\rho(s)ds\nno
&\qquad\qquad\qquad\qquad\qquad
-\frac{1}{\sqrt{1+t^2}}\int_0^tW(t,s)\sqrt{1+s^2}(\D+V(s))\rho(s)ds\nno
&=\rho(t)-(1+t^2)^{-1/2}W(t,0)\rho(0) =\rho(t),
\end{align*}
where, as $\rho$ has values in the range of $(1-P)$, we have $\rho(0)=0.$
Arguing
as in the proof of Lemma \ref{lm:no-solns-1} this implies that
$(\widehat{\D}_v+V)(1-P)$ is injective on its whole domain. Hence,
$(\widehat{\D}_v+V)\rho=0$ has no nonzero solutions in 
$L^2(\R_+)\otimes (1-P)v^*v(X)$.
\end{proof}

Next we split the range of $1\otimes P$ into three pieces, namely
$$ 1\otimes P= [1\otimes v^*(P-\Phi_0)vP]\,\oplus\,
[1\otimes v^*(1-P)vP]\,\oplus\,
[1\otimes v^*\Phi_0vP].$$

\begin{lemma}\label{lm:no-adj-solns-2} In
the subspace
$L^2(\R_+)\otimes v^*(P-\Phi_0)vPv^*v(X)$
the equation $(\widehat{\D}_v+V)\rho=0$ has no nonzero solutions and therefore has no nonzero solutions in $L^2(\R_+,(1+t^2)dt)\otimes v^*(P-\Phi_0)vPv^*v(X).$
\end{lemma}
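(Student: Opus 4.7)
I would follow the template of Lemma~\ref{lm:no-solns-2}, adjusting for the opposite sign of $\p_t$ in $\widehat{\D}_v$ versus $\tilde{\D}_v$ and, more importantly, for the \emph{unweighted} $L^2$-norm. The first task is the operator lower bound $v^*\D v\geq r_1\cdot 1_W$ on the subspace $W:=v^*(P-\Phi_0)vPv^*v(X)$, where $r_1>0$ is the smallest strictly positive eigenvalue of $\D$. A direct calculation using $v=vv^*v$ and the fact that $vv^*\in F$ commutes with each $\Phi_k$ shows that the $v^*\Phi_k v$, $k\geq 0$, are pairwise orthogonal projections summing to $v^*v$, each mapping into the $r_k$-eigenspace of $v^*\D v$; hence they are precisely the spectral projections of $v^*\D v$, and $v^*(P-\Phi_0)v=\sum_{k>0}v^*\Phi_k v$ is its strictly positive spectral projection. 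Together with $\D\geq 0$ on the range of $P$, this gives
$$\D+V(t)=(1+t^2)^{-1}(\D+t^2 v^*\D v)\geq \frac{r_1 t^2}{1+t^2}\quad\text{on }W.$$

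Next, the substitution $\sigma(t):=\sqrt{1+t^2}\,\rho(t)$ transforms $(\widehat{\D}_v+V)\rho=0$ into the first-order evolution equation $\p_t\sigma=(\D+V(t))\sigma$. Since $\D+V(t)$ is self-adjoint, differentiating the inner product yields
$$\frac{d}{dt}\la\sigma(t)|\sigma(t)\ra_X=2\la(\D+V(t))\sigma|\sigma\ra_X\geq \frac{2r_1 t^2}{1+t^2}\la\sigma|\sigma\ra_X\geq 0,$$
so $\la\sigma(\cdot)|\sigma(\cdot)\ra_X$ is monotone non-decreasing in $F^+$. Testing against any state $\phi$ of $F$ and invoking scalar Gronwall on $[1,\infty)$ (where $2r_1 t^2/(1+t^2)\geq r_1$) produces the exponential lower bound $\phi(\la\sigma(t)|\sigma(t)\ra_X)\geq e^{r_1(t-t_0)}\phi(\la\sigma(t_0)|\sigma(t_0)\ra_X)$ for $t\geq t_0\geq 1$.

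To conclude, assume $\rho\neq 0$; then $\sigma\neq 0$ somewhere, and by monotonicity $c:=\la\sigma(t_0)|\sigma(t_0)\ra_X$ is nonzero in $F^+$ for some (hence all larger) $t_0\geq 1$. Choose a state $\phi$ of $F$ with $\phi(c)>0$. The hypothesis $\rho\in L^2(\R_+)\otimes W$ forces $\int_0^\infty \phi(\la\sigma|\sigma\ra_X)(1+t^2)^{-1}\,dt<\infty$, yet the exponential bound gives
$$\int_{t_0}^\infty \phi(\la\sigma|\sigma\ra_X)\frac{dt}{1+t^2}\geq \phi(c)\int_{t_0}^\infty \frac{e^{r_1(t-t_0)}}{1+t^2}\,dt=\infty,$$
a contradiction. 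Hence $\rho=0$, and the inclusion $L^2(\R_+,(1+t^2)dt)\subseteq L^2(\R_+)$ yields the second assertion. \textbf{The main obstacle} compared with Lemma~\ref{lm:no-solns-2} is the loss of a uniform positive lower bound on $\D+V(t)$ as $t\to 0^+$: monotonicity of $\la\sigma|\sigma\ra_X$ alone (which sufficed in the weighted case) is not enough, and one must combine it with Gronwall exponential growth on $[1,\infty)$ to overpower the integrability weight $(1+t^2)^{-1}$ arising from the change of variable.
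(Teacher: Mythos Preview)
Your argument is correct, and the operator lower bound $\D+V(t)\geq r_1 t^2/(1+t^2)$ on $W$ together with the differential inequality for $\la\sigma|\sigma\ra_X$ are exactly what the paper establishes as well. The difference lies in how the contradiction with integrability is reached.

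The paper's proof inserts a preliminary step you omit: it first argues that any solution $\rho\in L^2(\R_+)\otimes W$ actually lies in the \emph{weighted} space $L^2(\R_+,(1+t^2)dt)\otimes W$. The reason is structural: the first component $\xi_1=-itv\rho$ of the corresponding element of $e_v(\hat{\E}\oplus\hat{\E})^T$ takes values in $(P-\Phi_0)vP(X)$, which is orthogonal to $\Phi_0(X)$; but an extended solution must have $\xi_1(\infty)\in\Phi_0(X)$, so $\xi_1(\infty)=0$, forcing $t\rho\in L^2$. Once $\rho$ is in the weighted space, $\sigma=\sqrt{1+t^2}\,\rho$ is in \emph{ordinary} $L^2$, and then mere monotonicity of $\la\sigma|\sigma\ra_X$ (plus eventual positivity of the right-hand integral $2r_1\int_0^s t^2\la\rho|\rho\ra\,dt$) already contradicts $\sigma\in L^2$ --- no Gronwall needed.

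Your route avoids the extended-solution bookkeeping and works entirely within the unweighted $L^2$ hypothesis, at the price of needing the sharper exponential growth $\phi(\la\sigma|\sigma\ra)\gtrsim e^{r_1 t}$ from Gronwall to defeat the weight $(1+t^2)^{-1}$ in $\int\la\rho|\rho\ra\,dt=\int(1+t^2)^{-1}\la\sigma|\sigma\ra\,dt$. This is a clean, self-contained analytic argument; the paper's version trades that analytic strength for a structural observation about where extended solutions can live, which lets it recycle the monotonicity argument of Lemma~\ref{lm:no-solns-2} essentially verbatim.
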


\begin{proof}
First, suppose we have a solution $\rho$ with 
$\rho(t)\in v^*(P-\Phi_0)vPv^*v(X)$ 
for all $t\geq 0$, and $\rho\in L^2(\R_+)\otimes v^*(P-\Phi_0)vPv^*v(X)$.
Then $-itv(\rho(t))\in (P-\Phi_0)vPv^*v(X)$ and so if this has a limit at
$\infty$ in $\Phi_0(X)$, the limit must be $0.$ That is,
$-itv(\rho)\in L^2(\R_+)\otimes (P-\Phi_0)vPv^*v(X)$ and so our solution
$\rho$ actually lies in the smaller space: $$L^2(\R_+,(1+t^2)dt)\otimes v^*(P-\Phi_0)vPv^*v(X).$$
Arguing as in Lemma \ref{lm:no-solns-2}
write $\rho(t)=(1+t^2)^{-1/2}\s(t)$, where $\s$ is now an (ordinary)
$L^2$ function with values in $v^*(P-\Phi_0)vPv^*v(X)$:
\begin{align*}
(1+t^2)^{-1}\frac{d}{dt}\la\s(t)|\s(t)\ra_X%& =\frac{1}{\sqrt{1+t^2}}
\frac{d}{dt}\sqrt{1+t^2}\la\rho(t)|\rho(t)\ra_X%\nno &
=\left\la(\D+V(t))\rho(t)|\rho(t)\right\ra_X+
\left\la\rho(t)|(\D+V(t))\rho(t)\right\ra_X.
\end{align*}
Since $v^*\D v$ is strictly positive and $\D$ is non-negative on 
$v^*(P-\Phi_0)vPv^*v(X)$, we have the estimate
$$\D+V(t)=(1+t^2)^{-1}(t^2v^*\D v+\D)>{r_1 t^2 /}(1+t^2),$$
where $r_1$ is the first positive 
eigenvalue of $v^*\D v$ on this subspace and therefore
$$\frac{1}{1+t^2}\frac{d}{dt}\la\s(t)|\s(t)\ra_X\geq\frac{2r_1 t^2}{1+t^2}
\left\la\rho(t)|\rho(t)\right\ra_X.$$
Multiplying by $1+t^2$ and 
integrating from $0$ to $s$ gives 
%(this is an integral of a continuous function 
%into the positive cone of the $C^*$-algebra $F$)
%\begin{align*}
$$ \int_0^s\frac{d}{dt}\la\s(t)|\s(t)\ra_Xdt
%&
=
\la\s(s)|\s(s)\ra_X-\la\s(0)|\s(0)\ra_X\nno
%&
\geq 2r_1\int_0^s t^2 \la \rho(t)|\rho(t)\ra_Xdt.$$
%\end{align*}
The right hand side is a nondecreasing function of $s$, and if $\rho$ 
is nonzero, this function is eventually positive. Thus arguing further as in Lemma \ref{lm:no-solns-2}
%Hence $\la\s(s)|\s(s)\ra_X$ is a 
%continuous
%non-decreasing function of $s$ in $F^+$, and so can not be integrable as can %be 
%seen by evaluating on a state of $F.$ Hence $\s$ is 
%not an element of $L^2$ and 
there are no nonzero solutions $\rho$ of 
$(\widehat{\D}_v+V)(\rho)=0$ 
in  $L^2(\R_+,(1+t^2)dt)\otimes v^*(P-\Phi_0)vPv^*v(X),$ and hence
none in $L^2(\R_+)\otimes v^*(P-\Phi_0)vPv^*v(X).$
\end{proof}

Next, we come to the subspace $L^2(\R_+)\otimes v^*(1-P)vPv^*v(X)$. On 
this subspace we will define a local parametrix which is a right inverse, 
but is not a left inverse,
instead providing solutions to our equation.
So for $t\geq s\geq 0$ define $G(t,s)$ (on the module $v^*(1-P)vPv^*v(X)$)
by
$$
 e^{(t-s)v^*\D v}+\sum_{n=1}^\infty (-1)^n\int_s^t\int_s^{t_1}\cdots
 \int_s^{t_{n-1}} ((1+t_1^2)\cdots(1+t_n^2))^{-1}\,
e^{(t-t_1)v^*\D v}\,v^*dv\cdots v^*dv\,e^{(t_n-s)v^*\D v}
{\bf dt}.
$$

\begin{lemma}\label{lm:Second-adj-kernel}
For all $t\geq s\geq 0$ the integrals and the 
infinite sum defining $G(t,s)$ converge absolutely in norm.
For $t\geq s\geq 0$, $G(t,s)$ is a bounded endomorphism of the module
$v^*(1-P)vPv^*v(X)$ with norm bounded by
$$ \Vert G(t,s)\Vert \leq 
\Vert e^{(t-s)v^*\D v}\Vert\,e^{(t-s)\Vert v^*dv\Vert}
\leq e^{(t-s)r_{-1}}e^{(t-s)\Vert v^*dv\Vert},$$
where $r_{-1}$ is the largest negative eigenvalue of $\D$. The family of 
endomorphisms
$G(t,s)$ satisfies the differential equations
$$\frac{d}{dt}G(t,s)=(\D+V(t))G(t,s),\qquad \frac{d}{ds}G(t,s)=-G(t,s)(\D+V(s)).$$
\end{lemma}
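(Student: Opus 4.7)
The proof will parallel those of Lemma \ref{lm:First-kernel}, Lemma \ref{lm:Second-kernel} and Lemma \ref{lm:First-adjoint-kernel}. The plan is to establish absolute convergence and the claimed norm estimate first, and then to verify the two differential equations by differentiating the Dyson-type series term by term in operator norm.

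For the convergence and the norm bound, I will first note that, by Lemma \ref{lm:subspaces-preserved}, $v^*\D v$ preserves the closed $F$-submodule $v^*(1-P)vPv^*v(X)$, so $e^{(t-s)v^*\D v}$ is well-defined via the functional calculus of this self-adjoint regular operator. The key point is the spectral inequality $v^*\D v\le r_{-1}$ on this submodule. Since $vv^*\in F$ commutes with $P$, for $\xi$ in the submodule one has $v\xi\in(1-P)(X)$, and combining with $v^*v\xi=\xi$ and $\D\le r_{-1}$ on $(1-P)(X)$ gives $\langle v^*\D v\,\xi|\xi\rangle_X=\langle\D(v\xi)|v\xi\rangle_X\le r_{-1}\langle\xi|\xi\rangle_X$. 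Hence $\|e^{(t-s)v^*\D v}\|\le e^{(t-s)r_{-1}}$ on this submodule. Combining this with the crude bound $\|V(t)\|\le\|v^*dv\|$, the observation that the weight $((1+t_1^2)\cdots(1+t_n^2))^{-1}$ is at most $1$, and the standard identity $\int_s^t\int_s^{t_1}\cdots\int_s^{t_{n-1}}\mathbf{1}\,dt_n\cdots dt_1=(t-s)^n/n!$, summing the resulting majorising series gives
\[
\|G(t,s)\|\le\|e^{(t-s)v^*\D v}\|\sum_{n=0}^\infty\frac{(t-s)^n\|v^*dv\|^n}{n!}=\|e^{(t-s)v^*\D v}\|\,e^{(t-s)\|v^*dv\|}\le e^{(t-s)r_{-1}}e^{(t-s)\|v^*dv\|}.
\]

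For the differential equations, I will differentiate the series term by term. The operator-norm identity $\frac{d}{dt}e^{(t-s)v^*\D v}=v^*\D v\,e^{(t-s)v^*\D v}$ is justified by the functional-calculus/mean-value-theorem argument recorded in the proof of Lemma \ref{lm:First-kernel}. For the $n$-th term ($n\ge 1$), the Leibniz rule yields two contributions: a boundary term at $t_1=t$ equal to $-(1+t^2)^{-1}v^*dv$ times the $(n-1)$-th term of $G(t,s)$ (after tracking the $(-1)^n$ signs), and an interior derivative giving $v^*\D v$ times the $n$-th term. Summing over $n$ and using the identity $v^*dv=v^*\D v-\D$ on $v^*v(X)$ collapses the total coefficient:
\[
v^*\D v-\frac{v^*dv}{1+t^2}=\frac{(1+t^2)v^*\D v-(v^*\D v-\D)}{1+t^2}=\frac{\D+t^2 v^*\D v}{1+t^2}=\D+V(t),
\]
which is exactly $\frac{d}{dt}G(t,s)=(\D+V(t))G(t,s)$. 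The second equation follows from the same computation carried out in the $s$-variable. As in the earlier proofs, the formal product rule involving the unbounded operator $v^*\D v$ is handled by the usual add-and-subtract trick, and the uniform absolute convergence of both the original series and its termwise-differentiated series legitimises the interchange of differentiation and summation.

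The main subtlety I anticipate is the spectral comparison $v^*\D v\le r_{-1}$ on the submodule, which relies essentially on the commutation hypotheses built into Theorem \ref{mainresult} (in particular that $vv^*$ commutes with $P$) in order to force $v\xi$ into the strictly negative spectral part of $\D$. Once this bound is in hand, the remainder is a direct adaptation of the arguments already recorded for Lemma \ref{lm:First-kernel} and Lemma \ref{lm:First-adjoint-kernel}.
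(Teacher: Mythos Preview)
Your proposal is correct and follows essentially the same approach as the paper, which simply says the proof is ``very similar to the proof of Lemma \ref{lm:Second-kernel}''; you have in fact spelled out the details the paper leaves implicit, including the spectral estimate $v^*\D v\le r_{-1}$ on the submodule and the algebraic identity $v^*\D v-(1+t^2)^{-1}v^*dv=\D+V(t)$ needed for the differential equation. One small remark: the commutation $[vv^*,P]=0$ you invoke does not require the special hypotheses of Theorem \ref{mainresult}; it already follows from $vv^*\in F$ and the standing assumption that $\D$ (hence $P$) commutes with $F$.
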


\begin{proof}
The proof of this is very similar to the proof of Lemma \ref{lm:Second-kernel}.
\end{proof}

Now define a local parametrix on continuous functions $\rho$ by
$$(\widehat{R}\rho)(t):=
(1+t^2)^{-1/2}\int_0^tG(t,s)\sqrt{1+s^2}\rho(s)ds,\quad \rho\in 
L^2(\R_+)\otimes v^*(1-P)vPv^*v(X).$$
As in the proof of Lemma \ref{lm:no-solns-1} $\widehat{R}$ defines a closable
linear mapping locally on $[0,M]$ on the initial dense domain of continuous 
functions. We note that $\widehat{R}(\rho)$ is differentiable.

\begin{lemma}\label{lm:adj-solns-1} 
For every vector $x$ in the space $v^*(1-P)vPv^*v(X)$ there exists a unique 
element
$\rho\in L^2(\R_+)\otimes v^*(1-P)vPv^*v(X)$ with $\rho(0)=x$ and 
$(\widehat{\D}_v+V)\rho=0$. Moreover, these are the only solutions in the
subspace $L^2(\R_+)\otimes v^*(1-P)vPv^*v(X).$ In fact these solutions $\rho$
clearly lie in $L^2(\R_+,(1+t^2)dt)\otimes v^*(1-P)vPv^*v(X)$ and satisfy
$\lim_{t\to\infty} -itv(\rho(t)) =0.$ 
\end{lemma}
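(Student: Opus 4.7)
The plan is to mirror the structure of Lemma \ref{lm:solns-1}, with the operator $\widehat{R}$ (built from the integral kernel $G(t,s)$ of Lemma \ref{lm:Second-adj-kernel}) playing the role that $\tilde{R}$ plays there. All manipulations would first be carried out locally on intervals $[0,M]$, where $\widehat{R}$ is densely defined and closable, and then lifted to the full half-line by the same device used in the proof of Lemma \ref{lm:no-solns-1}.

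First I would verify by differentiating under the integral sign, using $\frac{d}{dt}G(t,s)=(\D+V(t))G(t,s)$ from Lemma \ref{lm:Second-adj-kernel}, that for any continuous $\rho$ with values in ${\rm dom}(\D)\cap v^*(1-P)vPv^*v(X)$ one has
$$(\widehat{\D}_v+V)(\widehat{R}\rho)(t)=\rho(t),$$
yielding injectivity of $\widehat{R}$. Then, for smooth $\rho$ in the domain of $\widehat{\D}_v+V$, integration by parts combined with the companion identity $\frac{d}{ds}G(t,s)=-G(t,s)(\D+V(s))$ gives
$$(\widehat{R}(\widehat{\D}_v+V)\rho)(t)=\rho(t)-(1+t^2)^{-1/2}G(t,0)\rho(0),$$
and Lemma \ref{lm:operator} extends this identity to every $\rho\in{\rm dom}(\widehat{\D}_v+V)$. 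Imposing $(\widehat{\D}_v+V)\rho=0$ then forces the closed form
$$\rho(t)=(1+t^2)^{-1/2}G(t,0)x,\qquad x=\rho(0)\in v^*(1-P)vPv^*v(X),$$
and conversely any function of this form is a kernel element (either by the injectivity of $\widehat{R}$ or by a direct differentiation using the equation for $G(t,0)$).

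The main obstacle is not the formal calculus but the verification that this explicit formula genuinely lies in $L^2(\R_+)\otimes v^*(1-P)vPv^*v(X)$ for arbitrary $x$, together with the control at infinity. This is where restricting to $v^*(1-P)v$ is essential: on this subspace $v^*\D v$ is strictly negative with largest eigenvalue $r_{-1}<0$, so the estimate on $\|G(t,0)\|$ from Lemma \ref{lm:Second-adj-kernel} decays exponentially (the apparently growing factor $e^{t\|v^*dv\|}$ is in fact tamed by the weights $(1+t_j^2)^{-1}$ inside the Dyson series, contributing only a bounded $\tan^{-1}$ term). Consequently $\rho(t)$ lies not merely in $L^2(\R_+)$ but in the smaller weighted module $L^2(\R_+,(1+t^2)dt)\otimes v^*(1-P)vPv^*v(X)$. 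The same exponential decay, paired with the boundedness of $t(1+t^2)^{-1/2}$, delivers $\|-itv(\rho(t))\|\leq \|v\|\,t(1+t^2)^{-1/2}\|G(t,0)\|\,\|x\|\to 0$ as $t\to\infty$, ruling out any genuine extended $L^2$-contribution from this subspace.
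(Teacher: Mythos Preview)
Your proposal is correct and follows the paper's proof essentially line for line: the same local use of $\widehat{R}$, the two differential identities for $G(t,s)$, the appeal to Lemma \ref{lm:operator}, and the explicit formula $\rho(t)=(1+t^2)^{-1/2}G(t,0)\rho(0)$. Your observation that the crude bound $e^{(t-s)\Vert v^*dv\Vert}$ from the \emph{statement} of Lemma \ref{lm:Second-adj-kernel} must be refined to the $\tan^{-1}$ bound coming from the Dyson weights is exactly the estimate the paper invokes in its own proof, namely $\Vert G(t,0)\Vert\leq e^{tr_{-1}}e^{\tan^{-1}(t)\Vert v^*dv\Vert}$.
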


\begin{proof}
We work locally as in the proofs of Lemmas \ref{lm:no-solns-1} 
and \ref{lm:solns-1}.
%: in particular we follow the proof of Lemma \ref{lm:solns-1}
%quite closely.
Take $\rho$ a continuous function in $L^2(\R_+)\otimes v^*(1-P)vPv^*v(X)$ 
with values in ${\rm dom}(\D)$ and compute
%\begin{align*} 
$$\frac{-1}{\sqrt{1+t^2}}\p_t\sqrt{1+t^2}(\widehat{R}\rho)(t)
%&
=\frac{1}{\sqrt{1+t^2}}\p_t\left(\int_0^tG(t,s)\sqrt{1+s^2}\rho(s)ds\right)%\nno
%&
=\rho(t)-(\D+V(t))(\widehat{R}\rho)(t),$$
%\end{align*}
where we have used the computations from 
Lemma \ref{lm:Second-adj-kernel}. Thus
$(\widehat{\D}_v+V(t))(\widehat{R}\rho)(t)=\rho(t)$
and
$\widehat{R}$ is injective.
The injectivity is first proved locally on $[0,M]$ by using
Lemma \ref{lm:operator} which easily implies global injectivity.
On the other hand if $\rho$ is smooth and 
lies in the domain of $\widehat{\D}_v+V$ then
$(\widehat{\D}_v+V)(\rho)$ is continuous and so locally we get:
\begin{align*}
(\widehat{R}(\widehat{\D}_v+V)\rho)(t)
&=\frac{-1}{\sqrt{1+t^2}}
\int_0^tG(t,s)\left(-\p_s(\sqrt{1+s^2}\rho(s))+\sqrt{1+s^2}(\D+V(s)) 
\rho(s)\right)ds\nno
&=\frac{1}{\sqrt{1+t^2}}\int_0^t\p_s(G(t,s)\sqrt{1+s^2}\rho(s))ds
-\frac{1}{\sqrt{1+t^2}}\int_0^t(\p_sG(t,s))\,\sqrt{1+s^2}\rho(s)ds\nno
&\qquad\qquad\qquad-\frac{1}{\sqrt{1+t^2}}
\int_0^tG(t,s)\sqrt{1+s^2}(\D+V(s)) \rho(s)ds\nno
&=\rho(t)-(1+t^2)^{-1/2}G(t,0)\rho(0),
\end{align*}
where we have again used the derivative computations from Lemma 
\ref{lm:Second-adj-kernel}. Applying Lemma \ref{lm:operator} we get this
formula for all $\rho\in {\rm dom}(\widehat{\D}_v+V).$

Now if $\rho$ is in the kernel of $\widehat{\D}_v+V$ we have locally and 
hence globally
\begin{equation} 
\rho(t)=(1+t^2)^{-1/2}G(t,0)\rho(0),
\label{eq:adj-soln-formula}
\end{equation}
and this lies in $L^2(\R_+)\otimes v^*(1-P)vPv^*v(X)$ by the estimate:
$$\Vert G(t,0)\Vert\leq e^{tr_{-1}} e^{\tan^{-1}(t)\Vert v^*dv\Vert}$$
where $r_{-1}$ is the largest negative eigenvalue of $\D$ on the subspace.
Conversely, given any vector $\rho(0)\in v^*(1-P)vPv^*v(X)$, Equation 
\eqref{eq:adj-soln-formula}
defines a solution since $\widehat{R}$ is injective.
\end{proof}

Finally we need to consider the subspace $L^2(\R_+)\otimes v^*\Phi_0vP(X)$.
This subspace gives rise to extended solutions. That is, the solutions we 
seek here are the second components $\xi_2$ of a solution 
$\xi=(\xi_1,\xi_2)^T$ in $e_v(\hat{\E}\oplus \hat{\E})^T\subseteq 
(\hat{\E}\oplus\E)^T$ to the equation $e_v(-\p_t+\D)e_v\xi=0$,
where $\xi_1\in\hat{\E}$ satisfies $\xi_1=-ivt\xi_2$. Hence, a true extended 
solution (one where $\xi_1 \notin\E$) comes from those $\xi_2$ which 
behave like $(1+t^2)^{-1/2}$ as $t\to\infty$. With this reminder, we have

\begin{lemma}\label{lm:adj-solns-2} 
For every vector $x\in v^*(\Phi_0)vP(X)$ there exists a unique solution
to the equation $(\widehat{\D}_v+V)\rho=0$ in the space
$L^2(\R_+)\otimes v^*(\Phi_0)vP(X)$ with $\rho(0)=x.$ Moreover, every solution
in this space is of the form
$ \rho(t)=(1+t^2)^{-1/2}e^{-(\tan^{-1}(t))v^*dv}\rho(0)$ and 
\begin{align*}
&{\rm (1)} \lim_{t\to\infty} -itv(\rho(t))=-ive^{-\pi/2 v^*dv}(\rho(0))\in 
\Phi_0(X)\;\;\;{\rm and}\nno
&{\rm (2)}\;\; t\mapsto \left(-ivt\rho(t)+ive^{-(\pi/2)v^*dv}(\rho(0)\right)
\;\;\; {\rm is\;\;\; in}\;\;\; 
L^2(\R_+)\otimes v^*(\Phi_0)vP(X).
\end{align*}
%These are truly {\bf extended solutions}.
\end{lemma}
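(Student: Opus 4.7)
The plan is to follow the pattern of Lemma \ref{lm:adj-solns-1} but with the crucial twist that on the subspace $v^*\Phi_0vP(X)$ the operator $v^*\D v$ vanishes identically (because $\D\Phi_0=0$ and $\Phi_0$ commutes with $vv^*$), which collapses the integral-kernel construction to an explicit exponential.

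First I would make the substitution $\rho(t)=(1+t^2)^{-1/2}\sigma(t)$. A direct computation turns $(\widehat{\D}_v+V)\rho=0$ into
$$-\sigma'(t)+v^*v\D\,\sigma(t)+\tfrac{t^2}{1+t^2}v^*dv\,\sigma(t)=0.$$
Using the identity $v^*\D v=v^*v\D+v^*dv$ together with $v^*\D v\mid_{v^*\Phi_0v(X)}=0$ (which I would verify by writing $v^*\D v\cdot v^*\Phi_0v\eta=v^*\D\Phi_0(vv^*)v\eta=0$, since $\Phi_0$ commutes with $vv^*$ and $\D\Phi_0=0$) yields $v^*v\D=-v^*dv$ on this subspace. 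The ODE then collapses to the scalar-type equation $\sigma'(t)=-\tfrac{1}{1+t^2}v^*dv\,\sigma(t)$.

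Since $v\in M_k(\A)$ the endomorphism $v^*dv$ is bounded, and by Lemma \ref{lm:subspaces-preserved} together with the hypotheses of Theorem \ref{mainresult}, $v^*dv$ commutes with $\Phi_0$, $v^*v$ and $P$, hence preserves $v^*\Phi_0 vP(X)$. The unique solution with $\sigma(0)=\rho(0)$ is therefore $\sigma(t)=e^{-\tan^{-1}(t)v^*dv}\rho(0)$, giving $\rho(t)=(1+t^2)^{-1/2}e^{-\tan^{-1}(t)v^*dv}\rho(0)$. The pointwise norm bound $\|\rho(t)\|_X\le (1+t^2)^{-1/2}e^{(\pi/2)\|v^*dv\|}\|\rho(0)\|_X$ together with $(1+t^2)^{-1}\in L^1(\R_+)$ gives $\rho\in L^2(\R_+)\otimes v^*\Phi_0 vP(X)$. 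Uniqueness is established exactly as in Lemma \ref{lm:adj-solns-1}, via the local closable parametrix and Lemma \ref{lm:operator}; since our ODE has bounded time-dependent coefficient, Picard iteration on $[0,M]$ gives uniqueness directly.

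For item (1), continuity of the exponential gives $\lim_{t\to\infty}\sigma(t)=e^{-(\pi/2)v^*dv}\rho(0)$ and $t/\sqrt{1+t^2}\to 1$, so $\lim_{t\to\infty}-itv\rho(t)=-ive^{-(\pi/2)v^*dv}\rho(0)$. To see this limit lies in $\Phi_0(X)$, I would first prove the intertwining identity $v\cdot e^{-\alpha v^*dv}\xi=e^{-\alpha dvv^*}\cdot v\xi$ for $\xi\in v^*v(X)$ (both sides solve $\tfrac{d}{d\alpha}f(\alpha)=-dvv^*\cdot f(\alpha)$ with the same initial datum, using $v\cdot v^*dv=dv=dv\cdot v^*v$). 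Since $v\rho(0)=(vv^*)\Phi_0 vP\eta=\Phi_0 vP\eta\in\Phi_0(X)$, the identity yields $ve^{-(\pi/2)v^*dv}\rho(0)=e^{-(\pi/2)dvv^*}(v\rho(0))$, and one checks inductively that $e^{-(\pi/2)dvv^*}$ preserves $\Phi_0(X)$.

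For item (2), the intertwining identity rewrites the candidate $L^2$ function as
$$h(t):=-ivt\rho(t)+ive^{-(\pi/2)v^*dv}\rho(0)=-i\Bigl[\tfrac{t}{\sqrt{1+t^2}}e^{-\tan^{-1}(t)dvv^*}-e^{-(\pi/2)dvv^*}\Bigr]v\rho(0).$$
The expected obstacle is bounding $h$ in $L^2$. Using the asymptotic expansions $\tan^{-1}(t)=\pi/2-1/t+O(t^{-3})$ and $t/\sqrt{1+t^2}=1+O(t^{-2})$, one factors $e^{-(\pi/2)dvv^*}$ out and expands the bracket to obtain $\|h(t)\|_X=O(1/t)$ as $t\to\infty$. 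Since $1/t\in L^2[1,\infty)$ and $h$ is continuous (hence bounded) on $[0,1]$, the pointwise estimate $\la h(t)|h(t)\ra_X\leq\|h(t)\|^2\cdot 1_F$ integrates to an element of $F^+$, placing $h$ in the required $C^*$-module. The main delicacy, and hence the chief obstacle, is the rigorous justification of the swap identity for the exponential and the interpretation of the asymptotic expansion in the $C^*$-module norm — both of which reduce to uniform convergence of the Dyson-type series already controlled in Lemmas \ref{lm:First-kernel} and \ref{lm:Second-kernel}.
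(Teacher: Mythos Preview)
Your proposal is correct and follows essentially the same route as the paper: both arguments reduce the equation on $v^*\Phi_0 vP(X)$ by using $v^*\D v|_{v^*\Phi_0 v(X)}=0$ to collapse $\widehat{\D}_v+V$ to $\frac{-1}{\sqrt{1+t^2}}\p_t\sqrt{1+t^2}-\frac{1}{1+t^2}v^*dv$, obtain the explicit formula $\rho(t)=(1+t^2)^{-1/2}e^{-\tan^{-1}(t)v^*dv}\rho(0)$, and invoke the local parametrix framework (Lemma~\ref{lm:operator}) for uniqueness. Your substitution $\rho=(1+t^2)^{-1/2}\sigma$ is just a repackaging of the paper's parametrix $\widehat{E}$.

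The only place you work harder than necessary is item~(1). You prove the intertwining identity $v\,e^{-\alpha v^*dv}=e^{-\alpha\,dvv^*}v$ on $v^*v(X)$ and then argue that $dvv^*$ preserves $\Phi_0(X)$. The paper's argument is shorter: since $v^*dv$ preserves $v^*\Phi_0 vP(X)$ (Lemma~\ref{lm:subspaces-preserved}), the solution $\rho(t)$ stays in $v^*\Phi_0 v(X)$ for all $t$; then $v\rho(t)\in vv^*\Phi_0 v(X)=\Phi_0 vv^*v(X)\subseteq\Phi_0(X)$ directly, using only that $\Phi_0$ commutes with $vv^*$. No intertwining is needed. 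For item~(2) the paper simply declares it an exercise, and your asymptotic expansion giving $\|h(t)\|_X=O(1/t)$ is a perfectly good way to complete it.
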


\begin{proof}
We define a local parametrix for $\rho$ a continuous function in
$L^2(\R_+)\otimes v^*(\Phi_0)vP(X)$ by
$$ (\widehat{E}\rho)(t):=
\frac{-1}{\sqrt{1+t^2}}e^{-(\tan^{-1}(t))v^*dv}\int_0^t e^{(\tan^{-1}(s))v^*dv}
\sqrt{1+s^2}\rho(s)ds.$$
We observe that $(\widehat{E}\rho)$ is differentiable and satisfies
$(\widehat{E}\rho)(0)=0.$  
To show that this a parametrix, first use $v^*dv=v^*\D v-v^*v\D$ to rewrite
$$\widehat{\D}_v+V=\frac{-1}{\sqrt{1+t^2}}\p_t\sqrt{1+t^2}+
v^*\D v-\frac{1}{1+t^2}v^*dv.$$
As $v^*\D v$ acts as zero on $v^*\Phi_0v(X)$, this 
reduces on $v^*\Phi_0 v(X)$ to
$$\widehat{\D}_v+V=
\frac{-1}{\sqrt{1+t^2}}\p_t\sqrt{1+t^2}-\frac{1}{1+t^2}v^*dv.$$

Applying $\frac{-1}{\sqrt{1+t^2}}\p_t\sqrt{1+t^2}$ to 
$\widehat{E}(\rho)$ and using the product rule gives
\begin{align*}
\frac{-1}{\sqrt{1+t^2}}\p_t\sqrt{1+t^2}(\widehat{E}\rho)(t)
&=
\frac{1}{\sqrt{1+t^2}}
\p_t\left(e^{-(\tan^{-1}(t))v^*dv}\int_0^t e^{(\tan^{-1}(s))v^*dv}
\sqrt{1+s^2}\rho(s)ds.\right)\nno
&=\rho(t)-\frac{v^*dv}{1+t^2}(\widehat{E}\rho)(t).
\end{align*}
Thus $(\widehat{\D}_v+V)(\widehat{E}\rho)=\rho$ locally for continuous functions
As in previous cases $\widehat{E}$ is locally a closable operator
and so by Lemma \ref{lm:operator} we get that 
$(\widehat{\D}_v+V)(\widehat{E}\rho)=\rho$ locally for all $\rho$ in the 
domain of $\widehat{E}$. Hence, $\widehat{E}$ is globally injective.  
Integration by parts for smooth $\rho$ in the domain gives
$$(\widehat{E}(\widehat{\D}_v+V)\rho)(t)=
\rho(t)-(1+t^2)^{-1/2}e^{-\tan^{-1}(t)v^*dv}\rho(0).$$
Applying Lemma \ref{lm:operator}, we get this equation for all 
$\rho\in {\rm dom}(\widehat{\D}_v+V).$
Hence if $\rho\in {\rm ker}(\widehat{\D}_v+V)$ we have
$$\rho(t)=(1+t^2)^{-1/2}e^{-\tan^{-1}(t)v^*dv}\rho(0).$$
On the other hand if $x\in v^*(\Phi_0)vP(X)$ and we define $\rho$ by this 
equation with $\rho(0)=x$ then we have a solution of 
$(\widehat{\D}_v+V)(\rho)=0$ in the space 
$\rho\in L^2(\R_+)\otimes v^*(\Phi_0)vP(X).$
Since for each $t\geq 0$ we have $\rho(t)\in v^*\Phi_0 v(X)$, we also have 
$-itv(\rho(t))\in\Phi_0 v(X)\subseteq \Phi_0(X),$ and therefore:
$$\lim_{t\to\infty} -itv(\rho(t))=-ive^{-\pi/2 v^*dv}(\rho(0))\in \Phi_0(X).$$
It is an exercise to check that $t\mapsto 
\left(-ivt\rho(t)+ive^{-(\pi/2)v^*dv}(\rho(0)\right)$ 
is in $L^2(\R_+)\otimes v^*(\Phi_0)vP(X).$
\end{proof}

Putting together Proposition \ref{pr:first-extended-solns} and
Lemmas \ref{lm:no-adj-solns-1}, \ref{lm:no-adj-solns-2}, 
\ref{lm:adj-solns-1}, \ref{lm:adj-solns-2}, we 
have the following.

\begin{cor}\label{cr:adj-ker} 
The kernel of $(\widehat{\D}_v+V)$ on $L^2(\R_+)\otimes v^*v(X)$
is isomorphic to the right $F$-module 
$$\ker(\widehat{\D}_v+V)\cong [v^*(1-P)vP(X)]\oplus [v^*\Phi_0 vP(X)],$$
where the first summand consists of ordinary solutions in  
$L^2(\R_+,(1+t^2)dt)\otimes v^*v(X),$ while the second summand consists of 
extended solutions whose second component is in $L^2(\R_+)\otimes v^*v(X).$ 
Consequently, taking into account the (trivial) extended solutions
of Proposition \ref{pr:first-extended-solns}, $(1-vv^*)\Phi_0(X)$
we have the full kernel
$${\rm{ker}}(e_v((-\p_t+\D)\otimes 1_2)e_v)
\cong [v^*(1-P)vP(X)]\oplus [v^*\Phi_0 vP(X)]\oplus [(1-vv^*)\Phi_0(X)].$$
\end{cor}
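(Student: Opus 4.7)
The plan is to assemble the conclusion from the lemmas already proved by decomposing the ambient module $L^2(\R_+)\otimes v^*v(X)$ into orthogonal subspaces that are invariant under $\widehat{\D}_v+V$, and then reading off the kernel on each piece.

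First I would use Lemma \ref{lm:subspaces-preserved} to check that the four projections
$1\otimes(1-P)$, $1\otimes v^*(P-\Phi_0)vP$, $1\otimes v^*(1-P)vP$, and $1\otimes v^*\Phi_0vP$
commute with $\widehat{\D}_v+V$ on its domain. The point is that $v^*v$ is preserved (we have already restricted to $v^*v(X)$), and $P$, $1-P$, $\Phi_0$, together with all the projections of the form $v^*Qv$ for $Q\in\{P,1-P,P-\Phi_0,\Phi_0\}$, all commute with both $v^*v\D$ and $v^*dv=v^*\D v-v^*v\D$ by the commutation hypotheses on $v$. Hence they commute with the spatial part $\frac{-1}{\sqrt{1+t^2}}\p_t\sqrt{1+t^2}+v^*v\D+\frac{t^2}{1+t^2}v^*dv$ of $\widehat{\D}_v+V$. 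Since $P$ and $1-P$ sum to the identity on $v^*v(X)$ and $v^*(P-\Phi_0)v+v^*(1-P)v+v^*\Phi_0v=v^*v$ on $P(X)$, these four projections provide an orthogonal direct sum decomposition of $L^2(\R_+)\otimes v^*v(X)$ that is preserved by $\widehat{\D}_v+V$.

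Next I would invoke the four computations already done. Lemma \ref{lm:no-adj-solns-1} rules out any nonzero solution on $L^2(\R_+)\otimes(1-P)v^*v(X)$. Lemma \ref{lm:no-adj-solns-2} rules out any nonzero solution on $L^2(\R_+)\otimes v^*(P-\Phi_0)vP(X)$. Lemma \ref{lm:adj-solns-1} produces a bijection between $v^*(1-P)vP(X)$ and the ordinary $L^2$-solutions on $L^2(\R_+)\otimes v^*(1-P)vP(X)$, via $\rho(0)\mapsto \rho$. Lemma \ref{lm:adj-solns-2} produces a bijection between $v^*\Phi_0 vP(X)$ and the solutions on $L^2(\R_+)\otimes v^*\Phi_0 vP(X)$; these are precisely the extended solutions, since the second component $\xi_2=\rho$ decays like $(1+t^2)^{-1/2}$ so that the first component $\xi_1=-itv\rho$ has the nonzero limit $-ive^{-(\pi/2)v^*dv}(\rho(0))\in\Phi_0(X)$ at infinity. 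Summing these contributions gives the first isomorphism
$$\ker(\widehat{\D}_v+V)\cong [v^*(1-P)vP(X)]\oplus[v^*\Phi_0vP(X)].$$

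Finally, to obtain the full kernel of $e_v((-\p_t+\D)\otimes 1_2)e_v$, I would assemble the pieces from Section 5.4. The orthogonal decomposition $e_v=\widehat{e_v}+e_v^0$ reduces the problem to the two summands. On $\widehat{e_v}(\hat{\E}\oplus\hat{\E})^T$, the calculation preceding Lemma \ref{lm:no-adj-solns-1} shows that a vector in the kernel is determined by its second component $\xi_2\in\ker(\widehat{\D}_v+V)$, giving exactly $[v^*(1-P)vP(X)]\oplus[v^*\Phi_0vP(X)]$. On $e_v^0(\hat{\E}\oplus\hat{\E})^T$, Proposition \ref{pr:first-extended-solns} gives the trivial extended solutions $(1-vv^*)(X_0)=(1-vv^*)\Phi_0(X)$. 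Adding these yields the stated three-summand description. The only mild technical point, which I would flag, is verifying that the correspondence between $\xi_2\in\ker(\widehat{\D}_v+V)$ and full vectors in $\ker(e_v((-\p_t+\D)\otimes 1_2)e_v)$ is really an isomorphism of $F$-modules; this is straightforward because the relation $\xi_1=-itv\xi_2$ is $F$-linear, and in the extended case the limit map $\xi\mapsto\lim_{t\to\infty}\xi_1$ is automatic from the solution formula of Lemma \ref{lm:adj-solns-2}.
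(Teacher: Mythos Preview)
Your proposal is correct and follows exactly the approach the paper takes: the paper introduces this corollary simply by saying ``Putting together Proposition \ref{pr:first-extended-solns} and Lemmas \ref{lm:no-adj-solns-1}, \ref{lm:no-adj-solns-2}, \ref{lm:adj-solns-1}, \ref{lm:adj-solns-2}, we have the following,'' and your write-up makes that assembly explicit, including the invariance of the four orthogonal pieces (via Lemma \ref{lm:subspaces-preserved}) and the reduction from $e_v$ to $\widehat{e_v}$ and $e_v^0$.
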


\subsection{Completing the proof of Theorem \ref{mainresult}}

Consider the pairing of $\bma 1 & 0\\ 0 &0\ema$ with
$\p_t+\D$. Examining our earlier parametrix computations shows that $\p_t+\D$
with boundary
condition $P$ has no kernel, while $-\p_t+\D$ with
boundary condition
$1-P$ has  extended solutions: the constant functions with
value in $X_0$. The projection onto these extended solutions
is $\Phi_0$ and
$\mbox{Index}(\p_t+\D)=-[X_0].$
Since the mapping cone algebra is nonunital, we can not just pair with
the class of $e_v$, but must pair with $[e_v]-[\bma 1 & 0\\ 0 &
0\ema]$. We have computed the pairing of $(\hat{X},\hat\D)$ with both these
terms, and so we have the following intermediate result:

\begin{prop} The pairing of $[e_v]-[\bma 1 & 0\\ 0 &0\ema]$ with
$(\hat{X},\hat\D)$ is given by
\bean&&{\rm Index}(e_v(\p_t+\D)e_v)-{\rm Index}(\p_t+\D)=
{\rm Index}(e_v(\p_t+\D)e_v)+[X_0]\nno
&=&
[v^*(P-\Phi_0)v(1-P)(X)]
-[v^*(1-P)vP(X)]-[v^*\Phi_0vP(X)]-[(1-vv^*)(X_0)]+[X_0]\nno
&=&[v^*Pv(1-P)(X)]-[v^*\Phi_0v(1-P)(X)]
-[v^*(1-P)vP(X)]-[v^*\Phi_0vP(X)]+[vv^*(X)_0]\nno
&=&[v^*Pv(1-P)(X)]
-[v^*(1-P)vP(X)]-[v^*\Phi_0 v(X)]+[vv^*(X_0)]\nno
&=&
[v^*Pv(1-P)(X)]
-[v^*(1-P)vP(X)].\eean
\end{prop}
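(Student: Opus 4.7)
The plan is to assemble the kernel and cokernel formulas established in Corollary \ref{cr:ker} and Corollary \ref{cr:adj-ker} with the index of the unperturbed operator, and then to simplify in $K_0(F)$ using orthogonality of the relevant projections.

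First I would verify that ${\rm Index}(\p_t+\D) = -[X_0]$, which gives the first equality in the statement. This is the specialisation of the preceding analysis when $v$ is absent: on each eigenspace $X_k$ the homogeneous solution of $(\p_t+r_k)f=0$ is a multiple of $e^{-r_k t}$, which lies in $L^2(\R_+)$ only when $r_k>0$, but the APS condition $P\xi(0)=0$ then forces $\Phi_k\xi(0)=0$, so the kernel is zero; symmetrically, $(-\p_t+\D)f=0$ has no ordinary $L^2$-solutions, while its extended solutions are the constant $X_0$-valued functions, which automatically obey $(1-P)\xi(0)=0$. Hence the kernel is trivial while the extended cokernel is $X_0$, giving index $-[X_0]$.

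Substituting the kernel from Corollary \ref{cr:ker} and the three cokernel summands from Corollary \ref{cr:adj-ker} into ${\rm Index}(e_v(\p_t+\D)e_v)+[X_0]$ produces the second displayed line. The remainder is bookkeeping using three orthogonal splittings of projections in $K_0(F)$: (a) $v^*Pv=v^*(P-\Phi_0)v\oplus v^*\Phi_0 v$, which rewrites $[v^*(P-\Phi_0)v(1-P)(X)]$ as $[v^*Pv(1-P)(X)]-[v^*\Phi_0 v(1-P)(X)]$; (b) $X_0=(1-vv^*)(X_0)\oplus vv^*(X_0)$, valid since $vv^*\in F$ commutes with $\Phi_0$ (because $F$ commutes with $\D$), yielding $[X_0]-[(1-vv^*)(X_0)]=[vv^*(X_0)]$; and (c) $v^*\Phi_0 v(X)=v^*\Phi_0 vP(X)\oplus v^*\Phi_0 v(1-P)(X)$, valid because $P$ commutes with $v^*\Phi_0 v$ under the standing hypotheses of the theorem. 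Applying (a)--(c) collapses the expression to $[v^*Pv(1-P)(X)]-[v^*(1-P)vP(X)]-[v^*\Phi_0 v(X)]+[vv^*(X_0)]$.

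The conceptually key final step is the cancellation $[v^*\Phi_0 v(X)]=[vv^*(X_0)]$. Left multiplication by $v$ is a right-$F$-module isomorphism $v^*v(X)\to vv^*(X)$ with inverse left multiplication by $v^*$; restricting to the subprojection $v^*\Phi_0 v\le v^*v$ carries $v^*\Phi_0 v(X)$ isomorphically onto $vv^*\Phi_0 v(X)$, and since $\Phi_0$ commutes with $vv^*$ and $v(X)=vv^*v(X)=vv^*(X)$, this image equals $\Phi_0 vv^*(X)=vv^*(X_0)$. The only obstacle throughout is ensuring that every symbol $v^*QvQ'(X)$ really denotes a finitely generated projective $F$-module and that the composite projections $v^*Pv,\,v^*\Phi_0 v,\,v^*(P-\Phi_0)v$ all commute appropriately to make the orthogonal decompositions valid; this is precisely what the hypothesis that $P$ and $\Phi_0$ commute with both $v^*\D v$ and $v\D v^*$ (together with $F$ commuting with $\D$) is designed to guarantee.
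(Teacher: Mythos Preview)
Your proposal is correct and follows essentially the same path as the paper: assemble the index of $\p_t+\D$, insert the kernel and cokernel from Corollaries \ref{cr:ker} and \ref{cr:adj-ker}, and simplify in $K_0(F)$ via the orthogonal decompositions you list. The only cosmetic difference is in the final cancellation: the paper observes that $w=v^*\Phi_0$ is a partial isometry with $ww^*=v^*\Phi_0 v$ and $w^*w=vv^*\Phi_0$, whereas you exhibit left multiplication by $v$ as the module isomorphism --- these are inverse to one another, so the arguments are equivalent.
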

The last line follows because
$w=v^*\Phi_0$ is a partial isometry with
$ww^*=v^*\Phi_0v$ and $ w^*w=vv^*\Phi_0,$
showing that the modules defined by these projections are isomorphic.

Now we can finalise the proof of the Theorem by computing the index of
$$PvP:v^*vP(X)\to vv^*P(X),$$
where $P$ is the non-negative spectral projection for $\D$.
The kernel of $PvP$ is given by the set
$$\{\xi\in v^*vP(X): v\xi\in vv^*(1-P)(X)=(1-P)v(X)\}=Pv^*(1-P)v(X),$$
while the cokernel is given by
$$\{\xi\in vv^*P(X)=Pv(X):\xi=v\eta,\ \ \eta\in v^*v(1-P)(X)\}=(1-P)v^*Pv(X).$$
Thus
$$\mbox{Index}(PvP)=[Pv^*(1-P)v(X)]-[(1-P)v^*Pv(X)]\in K_0(F).$$

Hence
$$\mbox{Index}(PvP:v^*vP(X)\to
vv^*P(X))=-\left({\rm Index}(e_v(\p_t+\D)e_v)-{\rm
Index}(\p_t+\D)\right),$$
and the proof of Theorem \ref{mainresult} is complete.

%This result can of course be extended to boundary value problems where
%we replace $P$ by
%$$ Q=\sum_{k=a}^\infty\Phi_k.$$

\bigskip
\centerline{***********************************}

{\bf Remark} When $[\D,v^*dv]=0$, enormous 
simplifications occur in the preceeding 
analysis. In this case one can verify that for the 
equation $\tilde{\D}_v+V$
in $v^*v\E$, a solution
of
$\rho=(\tilde{\D}_{v}+V)\xi$ vanishing at zero is given by
$$\xi(t)=
\frac{e^{v^*dv\tan^{-1}(t)}}{\sqrt{1+t^2}}\int_0^t
e^{-v^*\D v(t-s)}\sqrt{1+s^2} e^{-v^*dv\tan^{-1}(s)}\rho(s)ds,$$
and we require $\rho\in v^*(P-\Phi_0)v\E$. This formula can be obtained by performing the sums and integrals in the definition of our 
more general parametrix. Similar comments apply to the other cases.

In the next section we apply Theorem 5.1 to graph algebras and the Kasparov 
module constructed from the gauge action in \cite{pr}. We will see that in 
this case we can 
always assume that $v^*dv$ commutes with $\D$, so that 
we are in the 
simplest situation described above.

\section{Applications to certain Cuntz-Krieger systems}

%We want to summarise here the examples that were considered in \cite{pr}.
For a  detailed introduction to Cuntz-Krieger systems as graph algebras
see \cite{R}.
A directed graph $E=(E^0,E^1,r,s)$ consists of
countable sets $E^0$ of vertices and $E^1$ of edges, and maps
$r,s:E^1\to E^0$ identifying the range and source of each edge.
{\bf We 
will always assume that the graph is} {\bf locally-finite}
which means that each vertex emits at most finitely many edges
%and is {\bf locally finite}
%which means it is row-finite 
and each vertex receives at most
finitely many edges. We write $E^n$ for the set of paths
$\mu=\mu_1\mu_2\cdots\mu_n$ of length $|\mu|:=n$; that is,
sequences of edges $\mu_i$ such that $r(\mu_i)=s(\mu_{i+1})$ for
$1\leq i<n$.  The maps $r,s$ extend to $E^*:=\bigcup_{n\ge 0} E^n$
in an obvious way. 
%A {\bf loop} in $E$ is a path $L \in E^*$ with
%$s ( L ) = r ( L )$, we say that a loop $L$ has an {\bf exit} if there
%is $v = s ( L_i )$ for some $i$ which emits more than one edge. 
%If
%$V \subseteq E^0$ then we write $V \ge w$ if there is a path $\mu
%\in E^*$ with $s ( \mu ) \in V$ and $r ( \mu ) = w$
%(we also sometimes say
 %that $w$ is {\bf downstream} from $V$). 
 A {\bf sink}
is a vertex $v \in E^0$ with $s^{-1} (v) = \emptyset$, a
{\bf source} is a vertex $w \in E^0$ with $r^{-1} (w) =
\emptyset$ however we will always assume there are {\bf no sources}.

A {\bf Cuntz-Krieger $E$-family} in a $C^*$-algebra $B$ consists
of mutually orthogonal projections $\{p_v:v\in E^0\}$ and partial
isometries $\{S_e:e\in E^1\}$ satisfying the {\bf Cuntz-Krieger
relations}
\begin{equation*}
S_e^* S_e=p_{r(e)} \mbox{ for $e\in E^1$} \ \mbox{ and }\
p_v=\sum_{\{ e : s(e)=v\}} S_e S_e^*   \mbox{ whenever $v$ is not
a sink.}
\end{equation*}
There is a universal 
$C^*$-algebra $C^*(E)$ generated by a non-zero Cuntz-Krieger
$E$-family $\{S_e,p_v\}$ \cite[Theorem 1.2]{kpr} .  A product
$S_\mu:=S_{\mu_1}S_{\mu_2}\dots S_{\mu_n}$ is non-zero precisely
when $\mu=\mu_1\mu_2\cdots\mu_n$ is a path in $E^n$. The
Cuntz-Krieger relations imply that 
%the projections $S_eS_e^*$ are
%also mutually orthogonal, we have $S_e^*S_f=0$ unless $e=f$, and
words in $\{S_e,S_f^*\}$ collapse to products of the form $S_\mu
S_\nu^*$ for $\mu,\nu\in E^*$ satisfying $r(\mu)=r(\nu)$
and we have 
\begin{equation}
C^*(E)=\clsp\{S_\mu S_\nu^*:\mu,\nu\in E^*\mbox{ and
}r(\mu)=r(\nu)\}.\label{spanningset}
\end{equation}
%In order to use the results of \cite{pr} %cited in this section 
%we 
%will always
%assume that our graphs are {\bf locally finite with no sources}. 
There is a canonical gauge action of $\bf T$ on $A:=C^*(E)$ 
determined on the
generators via: $\gamma_z(p_v)=p_v$ and $\gamma_z(S_e)=zS_e.$  
Because ${\bf T}$ is compact,
averaging over $\gamma$ with respect to normalised Haar measure
gives a faithful expectation $\Phi$ from $A$ onto the fixed-point
algebra $F=A^\gamma$:
\[
\Phi(a):=\frac{1}{2\pi}\int_{\bf T} \gamma_z(a)\,d\theta\ \mbox{ for
}\ a\in C^*(E),\ \ z=e^{i\theta}.
\]

As described in \cite{pr}, 
right multiplication by $F$ makes $A$ into a 
right (pre-Hilbert) $F$-module 
with inner product: $(a|b)_R:=\Phi(a^*b).$ Then $X$ denotes the Hilbert 
$F$-module completion of $A$ in the norm 
$$\Vert a\Vert_X^2:=\Vert(a|a)_R\Vert_F=
\Vert\Phi(a^*a)\Vert_F.$$
For each $k\in {\bf Z}$, the projection $\Phi_k$ onto the $k$-th 
spectral
subspace of the gauge action is defined by
$$\Phi_k(x)=\frac{1}{2\pi}\int_{\bf T}z^{-k}\gamma_z(x)d\theta,
\ \ z=e^{i\theta},\ \ x\in X.$$
%On generators
%we have the following formula for the $\Phi_k$:
%\begin{equation}\Phi_k(S_\al
%S_\beta^*)=\left\{\begin{array}{lr}S_\al S_\beta^* & \ \
%|\al|-|\beta|=k\\0 & \ \ |\al|-|\beta|\neq
%k\end{array}\right..\label{kthproj}\end{equation}
The generator of the gauge action on $X$,
$\D=\sum_{k\in\Z}k\Phi_k$, is determined 
on the generators of $A=C^*(E)$ by the formula
 $$\D(S_\al S_\beta^*)=(|\al|-|\beta|)S_\al S_\beta^*.$$

 The following result is proved in \cite{pr}.

\begin{prop}\label{Kasmodule}
Let $A$ be the graph $C^*$-algebra 
of a directed graph with no sources. Then 
$(X,\D)$ is an odd unbounded Kasparov $A$-$F$-module.
The operator $\D$ has discrete spectrum, and commutes with 
left multiplication by $F\subset A$. 
%which is {\bf either} a graph 
%algebra of a locally finite graph with no sources {\bf or} 
%has an action of $\bf T$ having
%full spectral subspaces. 
Set  $V=\D(1+\D^2)^{-1/2}$. Then $(X,V)$
defines a class in $KK^1(A,F)$.
\end{prop}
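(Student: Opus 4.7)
The plan is to verify each condition of Definition \ref{unbddKasparov} for $(X,\D)$. The Hilbert $F$-module structure of $X$ is given by construction, and left multiplication extends to a $*$-homomorphism $\phi:A\to End_F(X)$ because $\Phi$ is $F$-bilinear. The substantive items are therefore: (i) self-adjointness and regularity of $\D$ with discrete spectrum; (ii) commutation of $\D$ with the left action of $F$; (iii) boundedness of $[\D,a]$ on a dense $*$-subalgebra; (iv) compactness of $a(1+\D^2)^{-1/2}$ for all $a\in A$.

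First I would exploit the spanning set \eqref{spanningset}: each $S_\mu S_\nu^*$ is an eigenvector of $\D$ with integer eigenvalue $|\mu|-|\nu|$, so the spectral subspaces $X_k=\Phi_k(X)$ are mutually orthogonal closed $F$-submodules of $X$ whose algebraic sum is dense. Defining $\D$ initially as multiplication by $k$ on $X_k$ on the algebraic direct sum $\bigoplus_{k\in\Z}X_k$, the closure is self-adjoint and regular by the standard diagonalisation argument for operators on an orthogonal direct sum of Hilbert submodules, and the spectrum lies in $\Z$ and is therefore discrete. Commutation of $\D$ with the left action of $F$ follows because $F$ is gauge-invariant, so each $\Phi_k$ is an $F$-bimodule map. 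Boundedness of $[\D,S_\mu S_\nu^*]=(|\mu|-|\nu|)S_\mu S_\nu^*$ on the dense $*$-subalgebra of finite linear combinations of the $S_\mu S_\nu^*$ is immediate.

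The main obstacle is (iv). Since the vertex projections $p_v$ together with the $S_e$ generate $A$, a standard density argument reduces this to showing that $p_v(1+\D^2)^{-1/2}$ is a compact endomorphism for each vertex $v$. I would write
$$
p_v(1+\D^2)^{-1/2}=\sum_{k\in\Z}(1+k^2)^{-1/2}\,p_v\Phi_k,
$$
and observe that since the $\Phi_k$ have mutually orthogonal ranges the tail of this series has operator norm at most $(1+N^2)^{-1/2}$, so the sum converges in norm. Thus it suffices to show that each $p_v\Phi_k$ is a compact endomorphism, which I would do via explicit ``rank-one'' identities: for $k\ge 0$, the Cuntz--Krieger relation $p_v=\sum_{s(e)=v}S_e S_e^*$, iterated, yields $p_v\Phi_k=\sum_{s(\mu)=v,\,|\mu|=k}\theta_{S_\mu,S_\mu}$, a finite sum by local finiteness; for $k<0$ a short calculation using the eigenspace action of $\Phi$ yields $p_v\Phi_k=\theta_{S_\mu^*,S_\mu^*}$ for any single path $\mu$ with $r(\mu)=v$ and $|\mu|=-k$ (and $p_v\Phi_k=0$ if no such $\mu$ exists). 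Both identities are quick consequences of the fact that $\Phi$ projects onto the degree-zero part of the spanning set.

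Assembling these ingredients yields the unbounded Kasparov $A$-$F$-module; the bounded transform $V=\D(1+\D^2)^{-1/2}$ then represents a class in $KK^1(A,F)$ by the standard bounded/unbounded correspondence (Baaj--Julg).
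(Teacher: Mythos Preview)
Your argument is correct. The paper itself does not prove this proposition at all: it simply states ``The following result is proved in \cite{pr}'' and quotes the statement. Your sketch is precisely the approach taken in that reference---reduce compactness of $a(1+\D^2)^{-1/2}$ via the approximate unit $\{p_v\}$ to compactness of each $p_v\Phi_k$, and identify the latter as finite-rank via the explicit formulas $p_v\Phi_k=\sum_{|\mu|=k,\,s(\mu)=v}\theta_{S_\mu,S_\mu}$ for $k\ge 0$ and $p_v\Phi_k=\theta_{S_\mu^*,S_\mu^*}$ for $k<0$.

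One small point: your justification of the $k\ge 0$ formula via ``iterating the Cuntz--Krieger relation'' is slightly glib if sinks are present, since then $p_v$ is not equal to $\sum_{|\mu|=k,\,s(\mu)=v}S_\mu S_\mu^*$ but picks up extra terms from shorter paths ending at sinks. The formula $p_v\Phi_k=\sum_{|\mu|=k,\,s(\mu)=v}\theta_{S_\mu,S_\mu}$ is nevertheless correct, because any $S_\alpha S_\beta^*\in X_k$ has $|\alpha|\ge k$, so the sink-terms annihilate $X_k$; but the cleanest verification is the direct one you outline at the end (``quick consequences of the fact that $\Phi$ projects onto the degree-zero part'') rather than the iterated relation.
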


%This is the operator $\D$ that gives us the unbounded Kasparov module
%as described above.

We are going to investigate relations in $K_0(M(F,A))$. 
%where $A$ is a
%Cuntz-Krieger algebra arising from certain graphs and $F$ 
%is the fixed point algebra for the canonical gauge
%action of $\T^1$ on $A$. 
As graph algebras are generated by partial
isometries in $A$ with range and source in $F$, so $K_0(M(F,A))$
contains a lot of information about $A$ and the underlying
graph. The main result of Section 5 will give us more information.
%All of the preliminary graph theoretical results
%we require here may be found
%in \cite{R}, and are  summarised in Section \ref{graph}.
%Part of our motivation was to understand in more K-theoretic terms the
%analytic index results obtained in \cite{pr},

\begin{prop}\label{gens} Let $A$ be the graph $C^*$-algebra of a locally finite
directed graph. Let $\al=\al_1\al_2\cdots\al_{|\al|}$ be a path in the
graph, and $S_\al$ the
corresponding partial isometry in $A$. If $\mu$ is also
a path let $P_\mu=S_\mu S_\mu^*$.
%Denote by $\ual_j$ the initial segment of $\al$ of length $j$.
Then in $K_0(M(F,A))$ we have
the relations
$$ [S_\al P_\mu]=\sum_{j=1}^{|\al|-1}[S_{\al_j}S_{\al_{j+1}}
S_{\al_{j+2}}\cdots S_{\al_n}P_\mu S_{\al_n}^*
\cdots S_{\al_{j+2}}^*S_{\al_{j+1}}^*]\
+[S_{\al_{|\al|}}P_\mu],$$
$$[S_\al S_\beta^*]=[S_\al]-[S_\beta],\quad \al,\,\beta\ \ {\rm paths}.$$
%$$ [S_\al]=\sum_{j=1}^{|\al|}[S_{\al_j}]-\sum_{j=1}^{|\al|-1}\sum_{e\in E^1,
%s(e)=r(\ual_j), e\neq\al_{j+1}}[S_{\ual_j}S_eS_e^*].$$
\end{prop}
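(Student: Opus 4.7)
The plan is to derive both relations from Lemma \ref{composeisoms}, which asserts that if $v,w\in V_m(F,A)$ share a source projection ($v^*v=w^*w$) then $[vw^*]=[v]-[w]$ in $K_0(M(F,A))$. For the second identity, if $r(\al)\neq r(\beta)$ then $S_\al S_\beta^*=0$ and both sides vanish, so I may assume $r(\al)=r(\beta)$. In that case $S_\al$ and $S_\beta$ share source projection $p_{r(\al)}\in F$ and have range projections $P_\al,P_\beta\in F$, hence belong to $V(F,A)$, and Lemma \ref{composeisoms} applied to $(S_\al,S_\beta)$ yields $[S_\al S_\beta^*]=[S_\al]-[S_\beta]$ at once.

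For the first identity I proceed by induction on $n=|\al|$, the base case $n=1$ being vacuous. For the inductive step, set $\beta=\al_2\al_3\cdots\al_n$ so that $S_\al=S_{\al_1}S_\beta$, and consider the two partial isometries $v:=S_\al P_\mu$ and $w:=S_\beta P_\mu$. Both have source projection $P_\mu\in F$ and range projections $P_{\al\mu},P_{\beta\mu}\in F$, so both lie in $V(F,A)$. A direct calculation using the Cuntz-Krieger relations gives
\[
vw^*=S_\al P_\mu S_\beta^*=S_{\al_1}S_{\al_2}\cdots S_{\al_n}P_\mu S_{\al_n}^*\cdots S_{\al_2}^*,
\]
which is precisely the $j=1$ summand in the claimed formula. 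Lemma \ref{composeisoms} then yields
\[
[S_\al P_\mu]=[S_{\al_1}S_{\al_2}\cdots S_{\al_n}P_\mu S_{\al_n}^*\cdots S_{\al_2}^*]+[S_\beta P_\mu],
\]
and invoking the inductive hypothesis on $[S_\beta P_\mu]$ (a path of length $n-1$) together with the re-indexing $j\mapsto j+1$ reconstructs the stated identity.

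The main obstacle is not in the K-theoretic step but in confirming that all partial isometries produced along the way genuinely belong to $V(F,A)$. This reduces to checking that the relevant source and range projections -- always of the form $p_v$ or $P_\nu$ -- are gauge-invariant and hence lie in $F$, and that the Cuntz-Krieger relations produce simplifications such as $S_\beta^*S_\beta=p_{r(\beta)}$ absorbing with $P_\mu$ under the running assumption $s(\mu)=r(\beta)$ (with all classes vanishing trivially otherwise). The conceptual content is the observation that in the mapping cone picture, composition of partial isometries with matched source and range is additive on $K_0$, and that additivity is what drives the telescoping recursion.
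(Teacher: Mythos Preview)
Your inductive argument for the first relation is correct and uses the same key lemma as the paper (Lemma~\ref{composeisoms}), but you peel off edges from the opposite end: you write $\al=\al_1\beta$ and split off the $j=1$ summand first, applying the inductive hypothesis to $[S_\beta P_\mu]$ with the \emph{same} $\mu$; the paper writes $\al=\underline{\al}\,\al_n$ and splits off the final summand $[S_{\al_n}P_\mu]$ first, applying induction to $[S_{\underline{\al}}\,P_{\al_n\mu}]$ with a \emph{changing} $\mu$. Both telescoping schemes are valid; yours has the minor advantage that the auxiliary path $\mu$ is held fixed throughout, while the paper's scheme makes the appearance of the last term $[S_{\al_{|\al|}}P_\mu]$ immediate at the first step.

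There is one slip in your treatment of the second relation. You assert that when $r(\al)\neq r(\beta)$ ``both sides vanish''. The left side $[S_\al S_\beta^*]=[0]=0$ certainly does, but there is no reason for $[S_\al]-[S_\beta]$ to vanish: these are classes of partial isometries with different source projections $p_{r(\al)}\neq p_{r(\beta)}$, and index pairings readily distinguish them in general. The statement of the proposition (like the paper's one-line proof) is implicitly for $r(\al)=r(\beta)$, and your argument via Lemma~\ref{composeisoms} in that case is exactly the paper's. Simply drop the parenthetical case $r(\al)\neq r(\beta)$.
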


\begin{proof}  
%We first use Lemma \ref{orthogsum} and the Cuntz-Krieger
%relations to write
%$[S_\al]=\sum_{s(e)=r(\al)}[S_\al
%S_eS_e^*].$
%Then we see that it suffices to prove
%$$ [S_\al S_eS_e^*]=
%\sum_{j=1}^{|\al|-1}[S_{\al_j}S_{\al_{j+1}}S_{\al_{j+1}}^*]\
%+[S_{\al_{|\al|}}S_eS_e^*],$$
%again by Lemma \ref{orthogsum}.
This proceeds by induction on $|\al|$. If $|\al|=0$ then
$[S_\al]=[p_{r(\al)}]=0$ and if $|\al|=1$, there is nothing to
prove. So suppose the relation is true for all $\al$ with
$|\al|<n$.  Let $\al$ be a path with $|\al|=n$ and write $\al=\ual\al_n$
where $|\ual|=n-1$.
 Then
\begin{align*} [S_\al P_\mu]&=[S_{\ual} S_{\al_n}P_\mu]
=[S_{\ual} S_{\al_n}P_\mu S_{\al_n}^* S_{\al_n}P_\mu]
=[S_{\ual} S_{\al_n}P_\mu S_{\al_n}^*]+[S_{\al_n}P_\mu]\ \ \mbox{by Lemma}\
\ref{composeisoms}\nno
&= \sum^{|\al|-2}[S_{\al_j}S_{\al_{j+1}}S_{\al_{j+2}}\cdots S_{\al_n}P_\mu S_{\al_n}^*
\cdots S_{\al_{j+2}}^*S_{\al_{j+1}}^*]\
+[S_{\al_{|\al|-1}}S_{\al_n}P_\mu S_{\al_n}^*] 
+ [S_{\al_n}P_\mu],\end{align*}
the last line following by induction. 
The application of Lemma \ref{composeisoms} requires 
$$ (S_{\ual} S_{\al_n}P_\mu S_{\al_n}^*)^*
(S_{\ual} S_{\al_n}P_\mu S_{\al_n}^*)
=S_{\al_n} P_\mu S_{\al_n}^*
=(S_{\al_n}P_\mu)(S_{\al_n}P_\mu)^*.$$
The second relation follows from Lemma \ref{composeisoms} 
also, since $S_\al^* S_\al=p_{r(\al)}=S_\beta^* S_\beta$.
\end{proof}

\begin{lemma} Let $A$ be the graph $C^*$-algebra of a locally finite
directed graph $E$ with no sources. Then for all edges $e\in E^1$, the
class $[S_e]\in K_0(M(F,A))$ is not zero. Similarly if $r(e)=s(\al)$
then $[S_eP_\al]\neq 0$.
\end{lemma}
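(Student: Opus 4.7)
The plan is to pair $[S_eP_\al]$ with the unbounded Kasparov module $(X,\D)$ of Proposition~\ref{Kasmodule} via Theorem~\ref{mainresult}, and to identify the resulting element of $K_0(F)$ with the class of an honest non-zero projection. Observing that if $\al$ is the trivial path at $r(e)$ then $S_eP_\al=S_e$, the first assertion of the lemma follows from the second; I will therefore concentrate on the partial isometry $v=S_eP_\al=S_{e\al}S_\al^*$ (using $r(e)=s(\al)$), which satisfies $v^*v=P_\al$ and $vv^*=P_{e\al}$, both projections in $F$.

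First I would check the hypotheses of Theorem~\ref{mainresult}. From the commutation relation $\D S_\mu=S_\mu(\D+|\mu|)$ (equivalent to $[\D,S_\mu]=|\mu|S_\mu$), a direct computation gives
$$
v^*\D v=P_\al(\D+1),\qquad v\D v^*=P_{e\al}(\D-1),
$$
and since $P_\al,P_{e\al}\in F$ commute with $\D$, both expressions commute with $P$ and with $\Phi_0$ as required.

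Next I would compute the pairing. Using $S_{e\al}^*PS_{e\al}=p_{r(e\al)}\bigl(P+\sum_{k=-|e\al|}^{-1}\Phi_k\bigr)$ together with $S_\al\Phi_kS_\al^*=\Phi_{k+|\al|}P_\al$ and conjugating, I get $v^*Pv=P_\al(P+\Phi_{-1})$. Therefore $Pv^*(1-P)v(X)=0$ and $(1-P)v^*Pv(X)=P_\al\Phi_{-1}(X)$, and equation~\eqref{pisomindex} of Theorem~\ref{mainresult} specialises to
$$
\la [S_eP_\al],[(\hat X,\hat\D)]\ra=[P_\al\Phi_{-1}(X)]\in K_0(F).
$$

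The main obstacle is to identify this class concretely and show it is non-zero. Since $S_eP_\al=P_{e\al}S_e$, left multiplication by $S_e$ restricts to a right-$F$-module isomorphism $P_\al\Phi_{-1}(X)\cong P_{e\al}\Phi_0(X)$, with inverse given by $S_e^*$. Because the expectation $\Phi$ restricts to the identity on $F=A^\gamma$, the inner product $(a|b)_R=\Phi(a^*b)$ on the degree-zero subspace agrees with $(a,b)\mapsto a^*b$ on $F$, so $\Phi_0(X)\cong F$ as right $F$-modules; hence $P_{e\al}\Phi_0(X)\cong P_{e\al}F$ has class $[P_{e\al}]\in K_0(F)$. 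Finally, $F$ is an AF algebra (a standard property of the gauge fixed-point algebra of a locally finite graph with no sources), so $K_0(F)$ faithfully detects non-zero projections; since $P_{e\al}=S_{e\al}S_{e\al}^*\neq 0$, the class $[P_{e\al}]$ is non-zero, the pairing is non-trivial, and therefore $[S_eP_\al]\neq 0$ in $K_0(M(F,A))$.
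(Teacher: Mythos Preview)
Your argument is correct and follows essentially the same strategy as the paper: pair the class with the gauge Kasparov module, identify the result as the class $[P_{e\al}]\in K_0(F)$, and conclude non-vanishing from the fact that $F$ is AF. The only difference is packaging: the paper invokes the odd pairing of Section~\ref{map} directly to obtain $[S_eP_\al S_e^*\Phi_0]=[P_{e\al}]$, whereas you route through Theorem~\ref{mainresult} and the even APS module $(\hat X,\hat\D)$, verifying its commutation hypotheses and then computing $Pv^*(1-P)v(X)$ and $(1-P)v^*Pv(X)$ explicitly before translating back to $[P_{e\al}]$---a slightly longer detour to the same destination.
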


\begin{proof} The assumptions on the graph ensure the existence of the
Kasparov module $(X,\D)$ constructed from the gauge action. The
pairing $\la[S_eP_\al],[(X,\D)]\ra$ is given by 
$[S_eP_\al S_e^*\Phi_0]=[S_eP_\al S_e^*]\in
K_0(F)$,
where $\Phi_0$ is the kernel projection of $\D$, whose range is the
trivial $F$-module $F$. This class is nonzero
since $F$ is an AF algebra, and so satisfies cancellation. 
\end{proof}

{\bf Remark}.
The hypothesis of `no sources' was introduced so that we could use the
nonzero index pairing to infer nonvanishing of the class
$[S_eP_\al]$. This restriction may be loosened provided we use
other ways of deducing the nonvanishing. For
instance, if the class 
$[P_\al]-[S_eP_\al S_e^*]=ev_*([S_eP_\al])\neq 0$ in
$K_0(F)$, then the class $[S_eP_\al]$ cannot be zero. On the other hand,
if $[P_\al]=[S_eP_\al S_e^*]$ in $K_0(F)$, then since $F$ is AF, there
exists a partial isometry $v\in F$ such that 
$S_eP_\al S_e^*=vv^*$ and
$P_\al=v^*v$. Then
$ u=1-P_\al+v^*S_eP_\al$
is a unitary, and so defines a class in $K_1(A)$. Since the map
$K_1(A)\to K_0(M(F,A))$ is an injection, and takes $[u]$ to $[S_eP_\al]$,
we would know that $[S_eP_\al]\neq 0$ if we knew that $[u]\neq 0$. 
%This
%would allow us to remove the `no sources' hypothesis from the lemma,
%and from the statements below as well.

\begin{cor}\label{rels} Let $A$ be the graph $C^*$-algebra of a
locally
finite connected
directed graph with no sources.
Two nonzero classes
$[S_e P_\al],\ [S_fP_\al]$, with $e,f$ edges in the graph 
and $\al$ an arbitrary path, are
equal  if and only if
$r(e)=r(f)$. Two nonzero 
classes $[S_e],\,[S_f]$, $r(e)$ a sink, are equal,
$[S_e]=[S_f]$, if and only if $r(e)=r(f)$.
\end{cor}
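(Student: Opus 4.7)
The corollary has two parts, each requiring an ``if'' and an ``only if'' direction; the two ``if'' directions are uniform, while the two ``only if'' directions are of differing difficulty.

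For the ``if'' direction of both statements, set $v=S_eP_\al$ and $w=S_fP_\al$ (taking $\al$ trivial for the second statement). Nonvanishing of both classes together with $r(e)=r(f)$ produces a common source projection: using $p_{r(e)}S_\al=S_\al$ (valid since $r(e)=s(\al)$), I get $v^*v=P_\al p_{r(e)}P_\al=P_\al=w^*w$. Lemma \ref{composeisoms} then gives $[v]-[w]=[vw^*]=[S_eP_\al S_f^*]$. This element has gauge degree $|e|-|f|=0$, so it lies in the fixed-point algebra $F$; as a partial isometry with source $S_fP_\al S_f^*=P_{f\al}\in F$ and range $P_{e\al}\in F$, its class vanishes by Lemma \ref{Im:Put}(4), giving $[v]=[w]$.

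The ``only if'' direction of the first statement is essentially automatic: $[S_eP_\al]=[S_eS_\al S_\al^*]$ is nonzero only when $S_eS_\al\neq 0$, i.e., $r(e)=s(\al)$; likewise $r(f)=s(\al)$, forcing $r(e)=r(f)$. For the ``only if'' direction of the second statement, I would pair with the Kasparov module $(X,\D)$ of Proposition \ref{Kasmodule}. The computation in the proof of the preceding lemma identifies $\langle[S_e],[(X,\D)]\rangle$ with $[S_e\Phi_0 S_e^*]=[P_e]\in K_0(F)$, so $[S_e]=[S_f]$ forces $[P_e]=[P_f]$ in $K_0(F)$. The sink hypothesis is used via the corner computation: when $r(e)$ is a sink, $p_{r(e)}Fp_{r(e)}=\mathbb{C}p_{r(e)}$, hence $P_eFP_e=S_e(p_{r(e)}Fp_{r(e)})S_e^*=\mathbb{C}P_e$, so $P_e$ is a minimal projection of $F$. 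A direct calculation of $P_fFP_e$ shows it is spanned by $S_fS_e^*$, which is nonzero exactly when $r(e)=r(f)$ (otherwise $S_fS_e^*=0$ because the ranges differ); this identifies the only potential Murray--von Neumann equivalence in $F$. Since $F$ is AF and satisfies cancellation, $[P_e]=[P_f]$ in $K_0(F)$ forces MvN equivalence in $F$, hence $r(e)=r(f)$.

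The main obstacle is the last step of the second ``only if'' direction: one must rule out stable MvN equivalence between the minimal projection $P_e$ and $P_f$ (when $r(f)$ is a priori allowed to be non-sink) in $M_n(F)$ for $n>1$. The sink hypothesis on $r(e)$ is the key structural input, reducing this stable question to the corner computation above by exhibiting the Bratteli-diagram ``column'' for the sink $r(e)$ as a distinguished stable summand of $K_0(F)$.
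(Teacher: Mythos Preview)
Your proof is correct and follows essentially the same route as the paper: the ``if'' directions via Lemma \ref{composeisoms}, the first ``only if'' by noting that nonvanishing forces $r(e)=s(\al)=r(f)$, and the second ``only if'' by pairing with $(X,\D)$ to obtain $[P_e]=[P_f]$ in $K_0(F)$ and then using the sink hypothesis to analyze the resulting Murray--von Neumann equivalence. The paper carries out this last step by expanding the equivalence-implementing partial isometry as a linear combination of $S_\mu S_\nu^*$'s; your corner computation $P_fFP_e=\C\,S_fS_e^*$ is the same calculation packaged more cleanly.

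The ``obstacle'' you flag in your final paragraph is not one. If $p,q\in F$ are projections and $V\in M_n(F)$ satisfies $V^*V=p\oplus 0_{n-1}$ and $VV^*=q\oplus 0_{n-1}$, then $V=(q\oplus 0)V(p\oplus 0)$ forces every entry of $V$ except $V_{11}$ to vanish, and $V_{11}\in F$ already implements $p\sim q$. So cancellation in the AF algebra $F$ gives $P_e\sim_{\mathrm{MvN}} P_f$ in $F$ itself, and your computation that $P_fFP_e=0$ when $r(e)\neq r(f)$ then finishes the argument directly; no Bratteli-diagram analysis is needed.
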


\begin{proof}
Suppose that $r(e)=r(f)$,
and that $[S_eP_\al]\neq 0$ (otherwise there is nothing to prove). Then as $S_e P_\al S_f^*\in F$ we have
$$
0=[S_eP_\al S_f^*]=[S_eP_\al]-[S_f P_\al],$$
by Lemma \ref{composeisoms}. 
 Conversely,
 if $r(e)\neq r(f)$ at least one of these classes is zero.

For the second statement we observe that if $r(e)=r(f)$ then 
$S_eS_f^*$ is nonzero, and then $[S_e]=[S_eS_f^*]+[S_f]=[S_f]$ by Lemma \ref{composeisoms}. If $r(e)\neq r(f)$, we suppose $[S_e]=[S_f]$, for a
contradiction, and compute the index pairing with the Kasparov module
$(X,\D)$  constructed from the gauge action.
The pairing is given by
$$\la [S_e],[(X,\D)]\ra=-[S_eS_e^*]=-[S_fS_f^*]=\la
[S_f],[(X,\D)]\ra.$$
Hence the class of $S_eS_e^*$ in $K_0(F)$ ($F$ is the fixed point
algebra) coincides with the class of $S_fS_f^*$. Since $F$ is an AF
algebra, there exists a partial isometry $v\in\mbox{span}\{S_\mu
  S_\nu^*:|\mu|=|\nu|\}$ such that $S_eS_e^*=vS_fS_f^*v^*$. Thus
$$ p_{r(e)}=S_e^*vS_f\,S_f^*v^*S_e=\sum_j
c_j\overline{c_k}S_e^*S_{\mu_j}S_{\nu_j}^*S_f\,
S_f^*S_{\nu_k}S_{\mu_k}^*S_e.$$
Here the paths $\mu_j$ start from $s(e)$ and end at some vertex $v_j$,
while the corresponding path $\nu_j$ starts from $s(f)$ and ends at
the same vertex $v_j$. Moreover there is at least one path $\mu_j$
with $S_e^*S_{\mu_j}\neq 0$ so $\mu_j=e\mu_{j_2}\cdots\mu_{j_{k}}$,
where  $|\mu_j|=k$.
However, $r(e)$ is a sink, so any such path is of the form
$\mu_j=e$. This forces the length of the corresponding $\nu_j$ to be
 $1$, and $\nu_j=f$. The only way the product
$S_{\mu_j}S_{\nu_j}^*=S_eS_f^*$ can now be non-zero is if $r(e)=r(f)$,
contradicting our assumption.
 \end{proof}

\begin{cor}\label{the-kicker}  Let $A$ be the graph $C^*$-algebra of a
locally finite connected
directed graph with no sources.
Then if two partial isometries of the form $[S_e],[S_f]$  satisfy
$[S_e]=[S_f]\in K_0(M(F,A))$ then there exists a partial isometry $\rho$
in $F$ such that $\rho S_e=S_f$ and $\rho^*\rho S_e=S_e=\rho^*S_f$.
\end{cor}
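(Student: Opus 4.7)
The plan is to reduce the corollary to showing that $r(e) = r(f)$ and then to take $\rho := S_f S_e^*$. The element $S_f S_e^*$ has gauge degree $|f| - |e| = 0$, hence lies in $F$. The claimed identities are then direct consequences of $r(e) = r(f)$: one has $\rho S_e = S_f S_e^* S_e = S_f p_{r(e)} = S_f p_{r(f)} = S_f$, and symmetrically $\rho^* S_f = S_e p_{r(f)} = S_e$; moreover $\rho^* \rho = S_e S_f^* S_f S_e^* = S_e p_{r(f)} S_e^* = S_e S_e^*$ (using $r(e)=r(f)$), so $\rho$ is a partial isometry with range projection $\rho\rho^* = S_f S_f^*$, and $\rho^*\rho S_e = S_e S_e^* S_e = S_e$.

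To establish $r(e) = r(f)$ I would follow the strategy of Corollary \ref{rels}: pair both sides of $[S_e] = [S_f]$ with the gauge Kasparov module $(X, \D)$ of Proposition \ref{Kasmodule} to obtain $[S_e S_e^*] = [S_f S_f^*]$ in $K_0(F)$. Since $F$ is AF and therefore cancellative, there is a partial isometry $v \in F$ with $vv^* = S_e S_e^*$ and $v^*v = S_f S_f^*$, and by density of $\mbox{span}\{S_\mu S_\nu^* : |\mu|=|\nu|\}$ in $F$ one may take $v = \sum_j c_j S_{\mu_j} S_{\nu_j}^*$ as a finite sum with $s(\mu_j) = s(e)$ and $s(\nu_j) = s(f)$. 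The tautological identity $p_{r(e)} = S_e^*(vv^*)(vv^*)S_e$ then expands to
$$p_{r(e)} = \sum_{j,k} c_j \bar{c}_k \, S_e^* S_{\mu_j} S_{\nu_j}^* S_f S_f^* S_{\nu_k} S_{\mu_k}^* S_e,$$
and each surviving summand forces $\mu_j = e\mu_j'$ and $\nu_j = f\nu_j'$ with $|\mu_j'| = |\nu_j'|$.

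The hard part is to show this identity cannot hold when $r(e) \ne r(f)$. In the sink case of Corollary \ref{rels}, the hypothesis that $r(e)$ is a sink forces each tail $\mu_j'$ (and hence $\nu_j'$) to be empty, so every surviving term reduces to a multiple of $p_{r(e)} p_{r(f)} = 0$, contradicting $p_{r(e)} \ne 0$. The main obstacle in the present statement is to extend this analysis to the non-sink case, where longer tails $\mu_j', \nu_j'$ can a priori appear. My approach is to iterate the decomposition of Proposition \ref{gens} on each non-trivial-tail contribution and to exploit the pairwise orthogonality $p_v p_w = 0$ of vertex projections for $v \ne w$: a closed decomposition of $p_{r(e)}$ by paths passing through $r(f)$ would require a path-wise connection from $r(f)$ back to $r(e)$, and induction on path length then reduces the argument to the sink-case contradiction. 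Once $r(e) = r(f)$ is in hand, the construction of $\rho$ described above completes the proof.
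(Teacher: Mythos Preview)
Your construction $\rho = S_f S_e^*$ and your verification of the three identities (once $r(e)=r(f)$ is known) are correct and match the paper exactly: the paper's entire proof is the single sentence ``The required partial isometry $\rho$ is $S_fS_e^*$. The remaining statements are immediate.'' The fact that $[S_e]=[S_f]$ forces $r(e)=r(f)$ is taken from the immediately preceding Corollary~\ref{rels}, and the paper spends no further words on it here.

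Most of your proposal is therefore an attempt to re-derive Corollary~\ref{rels} inside this proof. Your index-pairing/AF argument correctly reproduces the paper's own proof of the sink case of Corollary~\ref{rels}. For the non-sink case, however, your inductive sketch does not work as written: ``induction on path length then reduces the argument to the sink-case contradiction'' presupposes that the induction terminates at a sink, but the graphs in question need have no sinks at all (the main application, Proposition~\ref{pr:wow}, explicitly assumes no sinks), so there is no base case to land on. The orthogonality $p_vp_w=0$ alone does not yield a contradiction when the tails $\mu_j',\nu_j'$ are nontrivial. In fairness, Corollary~\ref{rels} as stated only treats the sink case and the matched-$P_\alpha$ case explicitly, so the fully general statement of the present corollary is not completely justified by the paper's chain of results either --- but the paper makes no attempt to supply that argument here, simply invoking Corollary~\ref{rels}.
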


\begin{proof} The required partial
  isometry $\rho$ is
  $S_fS_e^*$. The remaining statements are immediate.
\end{proof}

\begin{lemma}\label{edges-gen}
Let $E$ be a row-finite directed graph. Then the group
$K_0(M(C^*(E)^\gamma,C^*(E)))$ is generated by the classes $[S_e P_\al]$, where
$e$ is an edge  and  $\al$ is a path.
\end{lemma}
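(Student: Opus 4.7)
By Lemma \ref{Im:Put}(5), every element of $K_0(M(F,A))$ is of the form $\kappa(v)$ for a partial isometry $v\in V_m(F,A)$, so it suffices to show each such class lies in the subgroup $G\subset K_0(M(F,A))$ generated by $\{[S_eP_\al]:e\in E^1,\ \al\in E^*\}$. A preliminary observation is that Proposition \ref{gens} (applied with $\mu$ a vertex, i.e.\ the trivial path, where necessary) yields $[S_\mu P_\al]\in G$ for every pair of paths $\mu,\al$, and consequently $[S_\mu]=[S_\mu p_{r(\mu)}]\in G$ and $[S_\mu S_\nu^*]=[S_\mu]-[S_\nu]\in G$ for any paths $\mu,\nu$ with $r(\mu)=r(\nu)$. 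Iterating Lemma \ref{orthogsum} then shows that any partial isometry which is a finite orthogonal sum $\bigoplus_i S_{\mu_i}S_{\nu_i}^*$ has its Putnam class in $G$.

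The plan is then to reduce an arbitrary $v\in V_m(F,A)$ to such an orthogonal form, modulo the Putnam equivalence relation of Lemma \ref{Im:Put}(6). The main structural input is the AF nature of $F$: the minimal projections of the building blocks of $F$ are the $P_\mu$ (and the vertex projections $p_w$, which satisfy $p_w=\sum_{s(e)=w}P_e$), so every projection in $M_m(F)$ is Murray--von Neumann equivalent, inside some $M_N(F)$, to an orthogonal direct sum $\bigoplus_i P_{\mu_i}$. Applying this to $v^*v$ and invoking Lemma \ref{composeisoms} together with $[w]=0$ from Lemma \ref{Im:Put}(4) (where $w\in M_N(F)$ witnesses the equivalence), we can replace $v$ by the equivalent $vw^*$ whose source projection is literally $\bigoplus_i P_{\mu_i}$. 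A symmetric manipulation normalises the range $vv^*$ to $\bigoplus_j P_{\nu_j}$, and a further norm-small perturbation (permitted by Lemma \ref{Im:Put}(2)) puts $v$ in a matrix algebra over the dense $*$-subalgebra $\mathrm{span}\{S_\mu S_\nu^*\}$.

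With source and range so normalised, Lemma \ref{orthogsum} decomposes $[v]=\sum_{i,j}[P_{\nu_j}vP_{\mu_i}]$, each summand being a partial isometry in $P_{\nu_j}AP_{\mu_i}$ with elementary source and range. Pre- and post-composing with $S_{\mu_i}$ and $S_{\nu_j}^*$ via Lemma \ref{composeisoms} (using that $[S_{\mu_i}],[S_{\nu_j}]\in G$ from the first paragraph) reduces each piece to the Putnam class of a partial isometry in $p_{r(\nu_j)}Ap_{r(\mu_i)}$, which after the algebraic normalisation is a finite $\Z$-combination of classes of the form $[S_\gamma S_\delta^*]$; these lie in $G$ by the first paragraph, completing the argument.

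The main obstacle is verifying the hypothesis of Lemma \ref{orthogsum} in the third paragraph, namely that each $vP_{\mu_i}v^*$ lies in $F$ (this is not guaranteed by $vv^*\in F$ alone). The compatibility of the orthogonal decompositions of source and range thus forces the detour through the AF structure in the second paragraph: after the algebraic perturbation, both $v^*v$ and $vv^*$ can be arranged to lie in a common finite-dimensional stage $F_n$ of the AF filtration of $F$, and within this finite-dimensional $C^*$-subalgebra one can select a simultaneous block-diagonal decomposition so that the hypothesis of Lemma \ref{orthogsum} holds for each piece.
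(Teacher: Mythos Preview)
Your proposal has the right opening moves (Putnam surjectivity, the reductions in the first paragraph via Proposition~\ref{gens} and Lemma~\ref{composeisoms}, and normalising $v^*v,\,vv^*$ to orthogonal sums of $P_\mu$'s using the AF structure of $F$), and these coincide with the paper's argument. The two subsequent steps, however, contain genuine gaps.

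\textbf{The decomposition via Lemma~\ref{orthogsum}.} Your proposed fix for the hypothesis $vP_{\mu_i}v^*\in F$ does not work. Even after arranging $v^*v,\,vv^*\in F_n$ for a common finite-dimensional stage, the partial isometry $v$ itself lives in $A$, not in $F_n$ or even in $F$; the block structure of $F_n$ therefore says nothing about where $v$ sends a subprojection $p\le v^*v$. Concretely, $vP_{\mu_i}v^*$ is a projection Murray--von~Neumann equivalent in $A$ to $P_{\mu_i}$, but there is no mechanism forcing it into $F$. No ``simultaneous block-diagonalisation'' inside $F_n$ can remedy this, because $v$ is not available there to diagonalise. (A related technical point: your norm-small perturbation of $v$ into $\mathrm{span}\{S_\mu S_\nu^*\}$ need not remain a partial isometry with range and source in $M_m(F)$, so Lemma~\ref{Im:Put}(2) does not apply directly.)

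\textbf{The final reduction.} Even granting the decomposition, your last step is unjustified. You arrive at a partial isometry $w$ with $w^*w=p_a$, $ww^*=p_b$ lying in the algebraic span, and assert that $[w]$ is a $\Z$-combination of classes $[S_\gamma S_\delta^*]$. But a $\C$-linear expression for $w$ does not yield a $\Z$-linear expression for its Putnam class; this assertion is essentially the original problem restated for a smaller $v$.

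The paper sidesteps both issues by a different manoeuvre after the normalisation. Instead of decomposing $v$, it multiplies (via Lemma~\ref{composeisoms}) by $S_\mu S_\nu^*$ so that the resulting class lands in $\ker(ev_*)$, which by the mapping-cone exact sequence is exactly the image of $K_1(A)$. There one has a \emph{unitary} representative, and one more application of Lemma~\ref{composeisoms} together with multiplication by partial isometries in $F$ (which contribute~$0$) exhibits the class as $[S_\nu S_\mu^*]\in G$. In other words, the exact sequence organises the reduction in place of the raw orthogonal decomposition you attempt.
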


\begin{proof} Let $[v]\in K_0(M(C^*(E)^\gamma,C^*(E)))$ and consider
$$ ev_*[v]=[v^*v]-[vv^*]\in K_0(C^*(E)^\gamma).$$
Now $K_0(C^*(E)^\gamma)$ is generated by the classes $[p_\mu]$, $p_\mu=S_\mu
S_\mu^*$, where $\mu\in E^*$ is a path, \cite{PR}. As $C^*(E)^\gamma$
is an AF algebra, there are partial isometries $W, Z$ over
$C^*(E)^\gamma$ such that
\begin{equation} W^*W=v^*v,\ \ WW^*=\sum_jp_{\mu_j},\ \ \ \ ZZ^*=vv^*,\ \
Z^*Z=\sum_kp_{\nu_k},\label{proj-sums}\end{equation}
and $[v]=[Z^*vW^*]$. The latter follows because $Z,\,W$ are partial isometries
over $F$ and so represent zero, while $[Z^*vW^*]=[Z^*]+[v]+[W^*]$.
In Equation \eqref{proj-sums} the sums are necessarily orthogonal, and may
be in a matrix algebra over $C^*(E)^\gamma$, and some zeroes (place-holders to
make the matrix dimensions equal) may have
been omitted from the sums. Observe that
$ev_*[Z^*vW^*]=\sum_k[p_{\nu_k}]-\sum_j[p_{\mu_j}]$. By considering
$p_{\nu_k}Z^*vW^*p_{\mu_j}$ we may suppose without loss of generality that we
have only one summand so that $WW^*=p_\mu$ and
$Z^*Z=p_\nu$. Then
$$ ev_*[Z^*vW^*S_\mu S_\nu^*]=[p_\nu]-[p_\nu]=0.$$
Hence $[v]=[Z^*vW^*]=[S_\nu S_\mu^*]$ modulo the image of $i_*$, and Lemma
\ref{gens} completes
the proof for $[v]\not\in\mbox{Image}(i_*)$. Observe that $S_\nu S_\mu^*\neq 0$
(and so $r(\mu)=r(\nu)$)
is a consequence.

In the case $ev_*[v]=0$, so that $[v]\in \mbox{Image}(i_*)$ we observe that
there is a partial isometry $X$ over $C^*(E)^\gamma$ such that
$X^*X=v^*v$ and $XX^*=vv^*$ so that
$ 1-v^*v+X^*v$
is unitary. Then, again since all partial isometries are over $F$,
$$[v]=[WX^*vW^*]=[WX^*ZZ^*vW^*]=[WX^*ZS_\nu S_\mu^*]=i_*[1-p_\mu +WX^*Z
S_\nu S_\mu^*]$$
gives a unitary representative of $v$. Since $i_*[1-p_\mu+WX^*Z S_\nu
S_\mu^*] =[S_\nu S_\mu^*]$, Lemma \ref{gens} completes the proof.
\end{proof}

The structure of $K_1(M(F,A))$ is even simpler.
\begin{lemma} If $E$ is a row-finite directed graph, $A=C^*(E)$ and
$F=C^*(E)^\gamma$, then $K_1(M(F,A))=0$.
\end{lemma}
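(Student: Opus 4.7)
The plan is to use the six term exact sequence (\ref{sixterm}) established at the start of Section \ref{map}:
\begin{equation*}
0\to K_1(A)\to K_0(M(F,A))\sr{ev_*}{\to}K_0(F)\sr{j_*}{\to} K_0(A)\to K_1(M(F,A))\to 0.
\end{equation*}
This sequence is available because $F=C^*(E)^\gamma$ is an AF algebra and hence satisfies $K_1(F)=0$. The exactness at $K_1(M(F,A))$ together with exactness at $K_0(A)$ tells us that $K_1(M(F,A))\cong\mathrm{coker}(j_*)$, so it suffices to prove that $j_*:K_0(F)\to K_0(A)$ is surjective.

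By Lemma \ref{lm:j}, the map $j_*$ agrees up to sign with the map on $K_0$ induced by the inclusion $j:F\hookrightarrow A$, so surjectivity of $j_*$ is equivalent to surjectivity of $j_*^{\,\mathrm{incl}}:K_0(F)\to K_0(A)$. I would therefore proceed by exhibiting a generating set for $K_0(A)$ all of whose elements lie in the image of the inclusion. It is standard (see e.g.\ \cite{kpr}, \cite{PR}) that for a row-finite graph $E$ with no sources the abelian group $K_0(C^*(E))$ is generated by the classes $[p_v]$ of the vertex projections, $v\in E^0$. Since the gauge action fixes each $p_v$, we have $p_v\in F$ for every vertex $v$, and so $[p_v]=j_*^{\,\mathrm{incl}}[p_v]\in\mathrm{Image}(j_*^{\,\mathrm{incl}})$.

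Thus the generators of $K_0(A)$ all lie in the image of $j_*$, so $j_*$ is surjective and $K_1(M(F,A))=\mathrm{coker}(j_*)=0$. There is no essential obstacle here: the only mildly non-trivial ingredient is the identification of generators of $K_0(C^*(E))$, which is recorded in the references cited above and was already used in the proof of Lemma \ref{edges-gen}. (If one wanted to avoid quoting this, one could instead observe that $j_*$ is split-surjective: the expectation $\Phi:A\to F$ provides a left inverse to $j$ on the level of completely positive maps, and on the AF subalgebra $F$ it induces the identity on $K_0(F)$, from which surjectivity of $j_*$ follows by a standard argument.)
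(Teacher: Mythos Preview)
Your argument is correct and is essentially the paper's own proof: both use the six-term sequence with $K_1(F)=0$ (since $F$ is AF), invoke Lemma~\ref{lm:j} to identify $j_*$ with the inclusion, and conclude by showing the inclusion is surjective on $K_0$ --- the paper cites \cite{PR}, Lemma 4.2.2, for this, while you spell out its content (vertex projections generate $K_0(A)$ and lie in $F$). One caution: your parenthetical alternative via the expectation $\Phi$ is not sound as stated, since a conditional expectation is not a $*$-homomorphism and does not in general induce a map on $K$-theory; the main argument stands without it.
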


\begin{proof} The exact sequence
$0\to A\otimes C_0(0,1)\to M(F,A)\to F\to 0$
and $K_1(F)=0$ yields
\begin{equation}
0\to K_1(A)\to K_0(M(F,A))\stackrel{ev_*}{\to}
K_0(F)\to K_0(A)\to K_1(M(F,A))\to0.
\label{eq:ker-coker}
\end{equation}
By Lemma \ref{lm:j}, the map $K_0(F)\to K_0(A)$ is induced 
(up to sign and
Bott periodicity)  by inclusion $j:F\to A$. This
map is surjective on $K_0$ by \cite{PR}[Lemma 4.2.2], and so $K_1(M(F,A))=0$.
\end{proof}

In \cite{PR}, the $K$-theory of a graph algebra $C^*(E)$, 
where $E$ has no sources or sinks, was computed as the
kernel ($K_1$) and cokernel ($K_0$) of the map given by the vertex
matrix on $\Z^{E^0}$ (there are subtleties when sinks are
involved). The proof of this result involves the dual of the gauge
action and the Pimsner-Voiculescu exact sequence for crossed products.
In Equation \eqref{eq:ker-coker}, we see the $K$-theory
again expressed as the kernel and cokernel of a map, but this time it
arises with no serious effort. The difference of course is that the
groups $K_0(M(F,A))$ and $K_0(F)$ are in general harder to compute.
 
While the map $ev_*:K_0(M(F,A))\to K_0(F)$ is neither one-to-one nor
onto in general, we can deduce that the two groups $K_0(M(F,A))$ and
$K_0(F)$ are in fact isomorphic in a wide range of examples.
We let $(\hat{X},\hat\D)$ be the APS Kasparov module arising from 
the Kasparov module $(X,\D)$. 

\begin{prop}
\label{pr:wow}
 Let $A$ be the graph $C^*$-algebra of a
 locally finite connected
directed graph with no sources and no sinks. Then the map 
$ {\rm Index}_{\hat\D}:K_0(M(F,A))\to K_0(F)$
given by the Kasparov product with the Kasparov module of the gauge
action is an isomorphism.
\end{prop}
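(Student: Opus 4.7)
My plan is to compute $\text{Index}_{\hat\D}$ explicitly on the generators $[S_eP_\alpha]$ of $K_0(M(F,A))$ supplied by Lemma \ref{edges-gen}, and then to exhibit an explicit inverse $\sigma:K_0(F)\to K_0(M(F,A))$.

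For $v=S_eP_\alpha$ a direct calculation gives $v\D v^*=(\D-1)P_{e\alpha}$ and $v^*\D v=(\D+1)P_\alpha$ (since $\D$ implements the gauge grading and commutes with elements of $F$), and these commute with the spectral projections $P$ and $\Phi_0$ of $\D$. Hence the hypotheses of Theorem \ref{mainresult} are met. Computing $Pv^*P:P_{e\alpha}P(X)\to P_\alpha P(X)$ directly, using that left multiplication by $S_e$ is an isometry raising the $\D$-degree by one, one finds trivial cokernel and kernel equal to $P_{e\alpha}\Phi_0(X)\cong p_{e\alpha}F$, giving
\[
\text{Index}_{\hat\D}([S_eP_\alpha])=[p_{e\alpha}]\in K_0(F).
\]
In particular the image of $\text{Index}_{\hat\D}$ contains $[p_\beta]$ for every path $\beta$ with $|\beta|\geq 1$, and combined with the no-sinks identity $[p_v]=\sum_{s(e)=v}[p_e]$ in $K_0(F)$, this proves surjectivity.

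For injectivity I define $\sigma$ on the projection generators of $K_0(F)$ by
\[
\sigma([p_\beta])=[S_{\beta_1}P_{\beta_2\cdots\beta_{|\beta|}}]\quad(|\beta|\geq 1),\qquad \sigma([p_v])=\sum_{s(e)=v}[S_e],
\]
the latter sum being finite and non-empty by local finiteness together with the no-sinks hypothesis. To see that $\sigma$ descends to a well-defined homomorphism on the direct-limit group $K_0(F)$, the two families of relations defining it must be lifted. For the Murray--von Neumann relation $[p_\beta]=[p_{\beta'}]$ inside $F_{|\beta|}$ (requiring $|\beta|=|\beta'|$ and $r(\beta)=r(\beta')$), Proposition \ref{gens} rewrites $[S_{\beta_1}P_{\beta_2\cdots}]=[S_\beta]-[S_{\beta_2\cdots}]$, and each $[S_\beta]$ depends only on $|\beta|$ and $r(\beta)$ because $S_\beta S_{\beta'}^*\in F$ has trivial class in $K_0(M(F,A))$ by Lemma \ref{Im:Put}(4). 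For the inductive relation $[p_\beta]=\sum_{s(f)=r(\beta)}[p_{\beta f}]$, Lemma \ref{orthogsum} applied to $v=S_{\beta_1}P_{\beta_2\cdots\beta_{|\beta|}}$ with $v^*v=P_{\beta_2\cdots\beta_{|\beta|}}=\sum_{s(f)=r(\beta)}P_{\beta_2\cdots\beta_{|\beta|}f}$ yields the required identity $[S_{\beta_1}P_{\beta_2\cdots}]=\sum_{s(f)=r(\beta)}[S_{\beta_1}P_{\beta_2\cdots f}]$ in $K_0(M(F,A))$. The identities $\text{Index}_{\hat\D}\circ\sigma=\text{id}$ and $\sigma\circ\text{Index}_{\hat\D}=\text{id}$ are then immediate on generators from the first paragraph and from the definition of $\sigma$ respectively.

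The main obstacle is the well-definedness of $\sigma$ on the full direct-limit group $K_0(F)$: one must identify a generating set of relations in the AF $K$-theory that can be lifted to $K_0(M(F,A))$, and then match these using the rigid combinatorics of partial isometries in $M(F,A)$ furnished by Proposition \ref{gens} and Lemma \ref{orthogsum}. The no-sinks assumption enters crucially to ensure that every vertex projection $p_v$ decomposes at the next level and so lies in the range of $\sigma$.
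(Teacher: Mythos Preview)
Your proof is correct. For surjectivity it is essentially identical to the paper's: both observe that $\text{Index}_{\hat\D}([S_eP_\alpha])=[p_{e\alpha}]$ and that, by the no-sinks hypothesis, the classes $[p_\mu]$ with $|\mu|\geq 1$ generate $K_0(F)$.

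For injectivity the two approaches diverge. The paper argues directly on generators: given $[S_eP_\alpha]$ and $[S_gP_\beta]$ with equal index $[p_{e\alpha}]=[p_{g\beta}]$ in $K_0(F)$, it uses the AF property of $F$ to find a partial isometry $v\in F$ with $p_{e\alpha}=vp_{g\beta}v^*$, sets $w=P_\alpha S_e^*vS_gP_\beta$, and then applies Lemma~\ref{composeisoms} twice to deduce $[S_eP_\alpha]=[S_gP_\beta]$. This shows the index map is injective \emph{as a set map on the generating set}; the paper then asserts injectivity of the homomorphism. Strictly speaking that last step needs justification---injectivity on a generating set does not by itself imply injectivity of a group homomorphism---and what is needed is precisely the construction of a well-defined section on $K_0(F)$. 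Your approach supplies this: you build $\sigma$ explicitly and verify it respects both the Murray--von Neumann relations within each $K_0(F_n)$ (via the identity $[S_{\beta_1}P_{\beta_2\cdots}]=[S_\beta]-[S_{\beta_2\cdots}]$ and the observation that $[S_\beta]$ depends only on $|\beta|$ and $r(\beta)$) and the connecting maps of the AF inductive system (via Lemma~\ref{orthogsum}). Thus your route is a bit longer but more self-contained; the paper's route is quicker but tacitly relies on the reader completing exactly the verification you carry out.
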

\begin{proof}
First the index map is a well-defined homomorphism, \cite{K}. We begin
by showing that the index map is one-to-one. So suppose that we have
edges $e,\ g$ and paths $\al,\ \beta$ in our graph (with no range a sink), and suppose
that 
$\mbox{Index}_{\hat\D}([S_eP_\al])
=\mbox{Index}_{\hat\D}([S_gP_\beta])$. A simple computation 
using Theorem 5.1 yields
$$ \mbox{Index}_{\hat\D}([S_eP_\al])=[S_eP_\al S_e^*]=
[S_gP_\beta S_g^*]=\mbox{Index}_{\hat\D}([S_gP_\beta]).$$
As $F$ is an AF algebra, we can find a partial isometry $v$ in $F$
such that 
$$S_eP_\al S_e^*=vS_gP_\beta S_g^*v^*.$$
Then setting $w=P_\al S_e^*vS_g P_\beta\neq 0$ we have 
$$P_\al=ww^*=wP_\beta w^*\ \ \mbox{and}\ \ 
P_\beta=w^*w=w^*P_\al w.$$ 
We will use 
Lemma \ref{composeisoms} below and need to check that some partial isometries have the same source projections. First
observe that 
$(S_eP_\al wP_\beta)^*(S_eP_\al wP_\beta)=P_\beta=w^*w$, so
$$[S_eP_\al]=[S_eP_\al wP_\beta w^*]=[S_eP_\al wP_\beta]+[w^*]=[S_eP_\al wP_\beta]=[S_eP_\al S_e^*vS_gP_\beta],$$
the second last last equality following since $w$ is a partial isometry in $F$. Now since 
$(S_gP_\beta)(S_gP_\beta)^*=S_gP_\beta S_g^*$ and 
$(S_eP_\al S_e^*v)^*(S_eP_\al S_e^*v)=S_gP_\beta S_g^*$, 
we can apply Lemma \ref{composeisoms} again to find
$$[S_eP_\al]=[S_eP_\al S_e^*vS_gP_\beta]=[S_eP_\al S_e^*v]
+[S_gP_\beta]=[S_gP_\beta].$$
Thus $\mbox{Index}_{\hat\D}$ is one-to-one.
Now supposing that our graph has no sinks, every class in $K_0(F)$ is
a sum of classes $[p_\mu]=[S_\mu S_\mu^*]$, where $\mu$ is a path in
the graph of length at least one.
For a given $\mu=\mu_1\cdots\mu_{|\mu|}$, define
$\overline{\mu}=\mu_2\cdots\mu_{|\mu|}$. Then it is straightforward to
check that
$$\mbox{Index}_{\hat\D}([S_\mu S_{\overline{\mu}}^*])=[p_\mu].$$
Hence the index map is onto and we are done.
\end{proof}

Observe that this does not mean that the $K$-theory of the graph
algebra is zero! The evaluation map and the index map are very
different. For the Cuntz algebra $O_n$, $n\geq 2$, 
for example, the fixed point
algebra has $K$-theory $K_0(F)\cong\Z[1/n]$ and so we have
$$ev_*([S_\mu])=[1]-[S_\mu S_\mu^*]\sim
1-\frac{1}{n^{|\mu|}}=(n^{|\mu|}-1)\frac{1}{n^{|\mu|}},$$
with $\ker(ev_*)\cong K_1(O_n)=0$ and 
$\mbox{coker}(ev_*)\cong K_0(O_n)=\Z_{n-1}$.
The index map gives us
$$\mbox{Index}_{\hat\D}([S_\mu])=
\sum_{j=0}^{|\mu|-1}[S_\mu S_\mu^*\Phi_j].$$
This equality follows from Theorem 5.1, and to determine the right hand side more explicitly, set $\overline{\mu}=\mu_{j+1}\cdots\mu_{|\mu|}$
and define the partial isometry 
$W=S_\mu S_{\overline{\mu}}^*\Phi_0$. Then 
$WW^*=S_\mu S_\mu^*\Phi_j$ and 
$W^*W=S_{\overline{\mu}}S_{\overline{\mu}}^*\Phi_0$. Thus in 
$K_0(F)$ we have 
$$\mbox{Index}_{\hat\D}([S_\mu])=
\sum_{j=0}^{|\mu|-1}[S_\mu S_\mu^*\Phi_j]=
\sum_{j=0}^{|\mu|-1}[S_{\overline{\mu}}S_{\overline{\mu}}^*\Phi_0]
=\sum_{j=0}^{|\mu|-1}[S_{\overline{\mu}}S_{\overline{\mu}}^*]
\sim\sum_{j=0}^{|\mu|-1}n^{-(|\mu|-j)}
=
\left(\frac{n^{|\mu|}-1}{n-1}\right)\frac{1}{n^{|\mu|}}.$$

The evaluation map and the mapping cone exact sequence gives us
$K_0(M(O_n^\gamma,O_n))\cong (n-1)\Z[1/n]$ (those polynomials all of
whose coefficients have a factor of $n-1$) which is of course
isomorphic to $\Z[1/n]\cong K_0(F)$ as an additive group.

\end{document}